\documentclass[a4paper, 12pt]{amsart}

\usepackage[leqno]{amsmath}
\usepackage{amssymb,amsfonts,xfrac,MnSymbol}
\usepackage{amsthm,amsrefs}
\usepackage{hyperref}
\usepackage{todonotes}
\usepackage{enumitem}
\usepackage{graphicx}
\usepackage{tikz-cd, tikz}
\usepackage{color}
\usepackage{import}
\usepackage{cleveref}

\numberwithin{equation}{section}

\newtheorem{theorem}{Theorem}[section]
\newtheorem{claim}[theorem]{Claim}

\newtheorem{lemma}[theorem]{Lemma}
\newtheorem{corollary}[theorem]{Corollary}

\newtheorem{step}{Step}

\newtheorem*{theorem*}{Theorem}
\newtheorem*{claim*}{Claim}
\newtheorem*{proposition*}{Proposition}
\newtheorem*{lemma*}{Lemma}
\newtheorem*{corollary*}{Corollary}

\makeatletter
\newcommand{\newreptheorem}[2]{%
\newtheorem*{rep@#1}{\rep@title}%
\newenvironment{rep#1}[1]{%
 \def\rep@title{#2 \ref*{##1}}%
\begin{rep@#1}}%
{\end{rep@#1}}}
\makeatother
\newtheorem{theoremA}{Theorem}

\newreptheorem{theoremA}{Theorem}

\theoremstyle{definition}
\newtheorem{definition}[theorem]{Definition}
\newtheorem{observation}[theorem]{Observation}
\newtheorem{remark}[theorem]{Remark}
\newtheorem{example}[theorem]{Example}

\newtheorem{notation}[theorem]{Notation}
\newtheorem{setup}[theorem]{Setup}

\newtheorem{convention}[theorem]{Convention}

\newtheorem*{definition*}{Definition}
\newtheorem*{observation*}{Observation}
\newtheorem*{remark*}{Remark}
\newtheorem*{example*}{Example}
\newtheorem*{question*}{Question}
\newtheorem*{exercise*}{Exercise}
\newtheorem*{fact*}{Fact}
\newtheorem*{notation*}{Notation}

\newenvironment{claimproof}[1][\proofname]
  {%
    \proof[#1]%
  }
  {%
    \endproof%
  }

\newcommand{\bbA}{\mathbb{A}}

\newcommand{\bbC}{\mathbb{C}}

\newcommand{\bbG}{\mathbb{G}}

\newcommand{\bbN}{\mathbb{N}}
\newcommand{\bbO}{\mathbb{O}}
\newcommand{\bbP}{\mathbb{P}}

\newcommand{\bbZ}{\mathbb{Z}}

\newcommand{\bfA}{\mathbf{A}}
\newcommand{\bfB}{\mathbf{B}}
\newcommand{\bfC}{\mathbf{C}}

\newcommand{\bfG}{\mathbf{G}}

\newcommand{\bfK}{\mathbf{K}}
\newcommand{\bfL}{\mathbf{L}}

\newcommand{\frC}{\mathfrak{C}}

\newcommand{\calA}{\mathcal{A}}
\newcommand{\calB}{\mathcal{B}}
\newcommand{\calC}{\mathcal{C}}

\newcommand{\calE}{\mathcal{E}}

\newcommand{\calO}{\mathcal{O}}
\newcommand{\calP}{\mathcal{P}}

\newcommand{\actson}{\curvearrowright}

\newcommand{\inj}{\hookrightarrow}

\newcommand{\ii}{^{-1}}

\newcommand{\gen}[1]{\left< #1 \right>}

\newcommand{\tild}[1]{\widetilde{#1}}

\DeclareMathOperator{\id}{id}

\DeclareMathOperator{\Cay}{Cay}

\DeclareMathOperator{\rank}{rank}

\DeclareMathOperator{\im}{Im}
\DeclareMathOperator{\len}{len}
\DeclareMathOperator{\lab}{lab}

\definecolor{zachcomment}{rgb}{0.55,0.71,0}

\definecolor{nircomment}{rgb}{0.82, 0.1, 0.26}

\title[Ascending Chains in 3-Manifold and Rel. Hyperbolic Groups] {Ascending chains in 3-manifold and (relatively) hyperbolic groups}

\author[E. Bering]{Edgar A. Bering IV} 
\author[J. Heikamp]{Jakob Heikamp} 
\author[J. Kohav]{Jack Kohav}
\author[N. Lazarovich]{Nir Lazarovich} 
\author[Z. Munro]{Zachary Munro}

\thanks{Bering was partially supported by the Azreili Foundation and supported by the National Science Foundation under
Award No. DMS--2532818. Lazarovich was partially supported by the Israel Science 
  Foundation (grant no. 1576/23).
  Munro was supported by the Israel Science 
  Foundation (grant no. 1576/23) and the National Science Foundation under Award No. DMS--2503331.}

\begin{document}

\begin{abstract}
We prove that any ascending chain of bounded rank subgroups in the fundamental group of a compact $3$-manifold stabilizes. 
We use geometrization to reduce the proof to fundamental groups of complete, finite-volume hyperbolic $3$-manifolds. 
To handle this case, we prove the following: 
In a toral relatively hyperbolic group, any ascending chain of bounded rank, locally relatively quasiconvex subgroups stabilizes. 
We note this theorem is new even for bounded rank, locally quasiconvex chains in hyperbolic groups. 
\end{abstract}
\maketitle

 \section{Introduction}

A group is \emph{Noetherian} (or satisfies \emph{ACC})  if every ascending chain of subgroups stabilizes. 
Most groups of interest in geometric group theory fail to be Noetherian because they contain nonabelian free subgroups. 
However, it is still possible to achieve ascending chain conditions for restricted classes of subgroups.

\begin{definition}
    A group satisfies \emph{$\omega$ACC} if any ascending chain of uniformly bounded rank subgroups stabilizes.
\end{definition}

Takahashi~\cite{takahasi} and Higman~\cite{higman} independently proved that free groups satisfy $\omega$ACC. Kapovich and Myasnikov~\cite{kapovich-myasnikov} gave a geometric proof of this result using Stallings folds. 
Shusterman~\cite{shusterman} proved that limit groups satisfy $\omega$ACC (this includes fundamental groups of compact surfaces). We prove the following.

\begin{theoremA}
\label{main theorem 3-manifolds}
    Fundamental groups of compact 3-manifolds satisfy $\omega$ACC.
\end{theoremA}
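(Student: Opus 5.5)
The plan is to reduce, via geometrization and standard decompositions, to a list of basic pieces, and then to prove that $\omega$ACC is preserved under the decompositions that appear. The basic pieces are finite-volume hyperbolic 3-manifold groups, Seifert fibered groups, and Sol groups.

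\textbf{Reductions.} First I pass to an orientable double cover. I will check that $\omega$ACC passes to finite-index overgroups: if $H_1\le H_2\le\cdots$ have rank at most $r$ in $G\supseteq G_0$ with $[G:G_0]=n$, then $H_i\cap G_0$ has rank at most $n(r-1)+1$ by Schreier. Once the chain $H_i\cap G_0$ stabilizes, the finitely many cosets force the $H_i$ to stabilize. Next I cap off spherical boundary components and use the Kneser--Milnor prime decomposition. This writes $\pi_1 M$ as a free product of prime-manifold groups and a free group. I also need $\omega$ACC for free products $A*B$ when $A$ and $B$ satisfy it. For this I would use Kurosh together with a Stallings/Kapovich--Myasnikov type folding argument on the Bass--Serre tree. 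A bounded-rank subgroup is carried by a finite graph of groups whose size is controlled by the rank, and whose vertex groups are conjugates of bounded-rank subgroups of $A$ and $B$, by Grushko. An ascending chain then gives eventually stable combinatorial data and ascending chains of vertex groups, which stabilize.

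\textbf{Prime pieces.} For a prime manifold, $\pi_1$ is finite, $\bbZ$, or the group of an irreducible manifold with (after doubling along boundary if needed) incompressible boundary. Seifert fibered groups are virtually central extensions of Fuchsian groups, or virtually polycyclic. Polycyclic groups are Noetherian. For central extensions $1\to\bbZ\to G\to F\to 1$, where $F$ is a surface group, I project a chain to $F$. Surface groups satisfy $\omega$ACC by Shusterman, so the images stabilize. The intersections with the central $\bbZ$ stabilize because $\bbZ$ is Noetherian, and then the chain stabilizes. Sol groups are polycyclic. For Haken manifolds with a nontrivial JSJ decomposition, I would use virtual specialness (Wise, Agol, Przytycki--Wise) or the graph-of-groups structure over $\bbZ^2$ edge groups. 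The latter is cleaner to combine with the folding argument, but ranks of vertex subgroups are not obviously bounded in amalgams over $\bbZ^2$. So I would instead aim to cite that mixed and graph manifolds are virtually special. Then I would handle them through the relatively hyperbolic / hierarchical structure, which is the route the abstract suggests: geometrization reduces everything to the hyperbolic case.

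\textbf{Hyperbolic case (main obstacle).} For a finite-volume hyperbolic $M$, $\pi_1M$ is toral relatively hyperbolic relative to its $\bbZ^2$ cusp groups. By the tameness theorem together with Canary's covering theorem, every finitely generated subgroup is either geometrically finite, hence relatively quasiconvex, or a virtual fiber subgroup. A chain of virtual fibers has bounded index growth, since it consists of finite-index subgroups of one fiber group up to commensurability, so it stabilizes. So the key theorem is that in a toral relatively hyperbolic group, ascending chains of rank-at-most-$r$, locally relatively quasiconvex subgroups stabilize. I would attack this with a folding argument in a coned-off Cayley graph, or via Dahmani--Guirardel style relative Stallings graphs. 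A bounded-rank relatively quasiconvex subgroup has a core graph whose non-peripheral part has size bounded by a function of $r$ and the quasiconvexity constant. The hard point is that the quasiconvexity constant need not be uniform along the chain. This is where local quasiconvexity of the union must be used. If the chain does not stabilize, the union $H_\infty$ is generated by boundedly many elements only if it is finitely generated. I would try to show $H_\infty$ is finitely generated by a limiting/compactness argument on rank-$r$ generating tuples modulo the action. That would make $H_\infty$ quasiconvex and therefore equal to some $H_i$. I expect this compactness step to be the main difficulty.
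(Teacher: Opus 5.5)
There are two genuine gaps in your proposal. Your reductions (orientable cover, prime decomposition) and your treatment of Seifert-fibered, Sol and virtual-fiber subgroups are fine and essentially match the paper.

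\textbf{First gap: gluing the geometric pieces along tori.} Virtual specialness cannot help here. Since $\omega$ACC passes to subgroups, embedding in a RAAG is useful only if RAAGs satisfy $\omega$ACC, and they do not. In $F_2\times F_2$, let $\psi\colon F_2\to BS(2,3)$ be a surjection and $\phi$ a non-injective surjective endomorphism of $BS(2,3)$. The kernels $N_n=\ker(\phi^n\circ\psi)$ strictly increase, and each is normally generated by at most three elements (Tietze). So the fibre products $\{(x,y) : xN_n=yN_n\}$, generated by the diagonal together with $(w,1)$ for the normal generators $w$ of $N_n$, form a strictly ascending chain of $5$-generated subgroups. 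Nor can everything be reduced to the hyperbolic case: graph manifolds have no hyperbolic pieces, and in mixed manifolds the Seifert-fibered pieces still have to be glued in.

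The route you set aside is the one that works; the missing ingredient is acylindricity. The sphere-and-torus splitting of $\pi_1 M$ is $k$-acylindrical with finitely generated abelian edge groups. Weidmann's theorem then gives $\sum_v\rank G_v-\sum_e\rank G_e\le(2k+1)\rank H$ for the induced splitting of any finitely generated $H$, which is exactly the vertex-rank bound you could not see. You also need to bound the number of edges with nontrivial group in the induced splittings of the $H_i$. The paper does this by reducing, via Grushko decompositions and a minimal-rank counterexample, to chains of pairwise isomorphic subgroups. It then applies coherence, together with Bestvina--Feighn accessibility and acylindricity. Bounded complexity plus $\omega$ACC of the vertex groups then forces stabilization. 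This combination theorem also covers free products, so your separate folding sketch is unnecessary. As stated that sketch is also wrong: a rank-one subgroup of $A*B$ can have an arbitrarily long cycle of trivial vertices as its quotient graph, so graph size is not controlled by rank.

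\textbf{Second gap: the hyperbolic case, which you leave open.} Finite generation of $H_\infty=\bigcup_i H_i$ is equivalent to stabilization of the chain, so reducing to it restates the theorem, and no compactness argument is given. The uniformity you correctly identify as missing does not come from the union, which is not known to be finitely generated. It comes from Weidmann--Weller: each $H_i$, being locally relatively quasiconvex of rank at most $r$, is the image of a carrier graph whose quasi-isometric embedding and quasiconvexity constants depend only on $G$ and $r$. Its essential part also falls into one of finitely many equivalence classes determined by $G$ and $r$.

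Even then, the free edges and peripheral coset data vary with $i$. The paper controls them in two stages. First it arranges that no $H_i$ lies in a proper free factor of a later $H_j$. This forces free segments and trivial peripheral turns to occur in realizations of the fixed generators of $H_1$, which bounds their lengths. It then uses a bounded coset penetration lemma for taut quasigeodesics to make the nontrivial peripheral stars stabilize.
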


\begin{theoremA}
\label{main theorem rel Hyperbolic}
    In a toral relatively hyperbolic group, any ascending chain of bounded rank, locally relatively quasiconvex subgroups stabilizes. 
\end{theoremA}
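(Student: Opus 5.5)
Suppose $H_1 \le H_2 \le \cdots$ is an ascending chain in a toral relatively hyperbolic group $G$, with each $H_i$ locally relatively quasiconvex and of rank at most $r$; set $H = \bigcup_i H_i$. The plan is to show that $H$ is finitely generated. Once that holds, the finitely many generators of $H$ all lie in some $H_n$, so $H = H_n$ and the chain stabilizes.

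\textbf{Step 1: uniform geometric control.} Fix a finite generating set of $G$ and use the coned-off Cayley graph, or equivalently a cusped space $X$ that is Gromov hyperbolic. For each $H_i$ choose a generating set of size at most $r$. I want a uniform relative quasiconvexity constant for the $H_i$, and a bound on the length of their generators, after replacing $H_i$ by a conjugate or changing generating sets within bounded rank.

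The natural tool is a folding or cores argument in the spirit of Kapovich--Myasnikov. Represent $H_i$ by a relative Stallings-type graph of groups, or by a ``relative core'' in $X$ whose non-parabolic part has bounded complexity in terms of $r$. Parabolic subgroups are free abelian of bounded rank, so each $H_i \cap P^g$ is generated by at most $r$ elements of a $\mathbb{Z}^k$. Such subgroups themselves satisfy ACC, since $\mathbb{Z}^k$ is Noetherian.

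\textbf{Step 2: passing to a limit.} Suppose the chain never stabilizes. Then the number of non-peripheral generators needed, or the quasiconvexity constant, must diverge. Rescale and take a limit: either a Bestvina--Paulin style action on an $\mathbb{R}$-tree, or a limit of cores in $X$.

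Local relative quasiconvexity is what keeps things controlled. Every finitely generated subgroup of each $H_i$ is relatively quasiconvex. I would apply this to $\langle H_i, h\rangle \le H_{i+1}$ to rule out arbitrarily long ``thin'' segments appearing in the cores. The aim is to produce a bounded-complexity core that carries all of $H_i$ for $i$ large. The parabolic pieces are handled separately by ACC in $\mathbb{Z}^k$, applied to the finitely many bounded-rank peripheral subgroups attached at cusps.

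\textbf{Step 3: conclusion.} With uniform constants, there are only finitely many possible cores of bounded size up to the $G$-action, modulo parabolic labels. Since the chain is nested, the cores must eventually coincide, which gives $H_n = H_{n+1} = \cdots$.

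\textbf{Main obstacle.} The hard step is Step 1, obtaining a quasiconvexity constant uniform in $i$. Local relative quasiconvexity of each $H_i$ separately gives no uniform bound a priori. I would try first a compactness argument: if the constants blew up, a limiting subgroup would contain a finitely generated, non-relatively-quasiconvex subgroup that already lives in some $H_n$, contradicting local relative quasiconvexity of $H_n$. Making this limit argument rigorous in the presence of parabolics is where the real work lies.
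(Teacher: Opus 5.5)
Your proposal has a genuine gap. The step that carries all the content is left open, and the heuristic you offer for it cannot work as stated.

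First, the goal of Step~1, a relative quasiconvexity constant uniform in $i$, is not a preliminary reduction but essentially the theorem itself. In any finitely generated group, a $\kappa$-quasiconvex subgroup is generated by its elements of length at most $2\kappa+1$. So for hyperbolic $G$ there are only finitely many $\kappa$-quasiconvex subgroups, and any nested chain of them stabilizes with no rank hypothesis at all.

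Second, your compactness argument (``if the constants blew up, some finitely generated subgroup of some $H_n$ would fail to be relatively quasiconvex'') never uses the rank bound, and without it the implication is false. Take the locally quasiconvex group $F(a,b)$ and the chain $H_n=\langle a,bab^{-1},\dots,b^nab^{-n}\rangle$. Its quasiconvexity constants are at least $n$, since the vertex $b^n$ on the geodesic from $1$ to $b^nab^{-n}$ is at distance $n$ from $H_n$. Yet every finitely generated subgroup of $F(a,b)$ is quasiconvex. For the same reason, applying local quasiconvexity to $\langle H_i,h\rangle\le H_{i+1}$ only gives a constant depending on that subgroup, so it cannot rule out long thin segments uniformly. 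Likewise, the non-parabolic part of a relative core has combinatorial type bounded in terms of $r$, but its metric size is not bounded.

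What is missing is a mechanism by which the nesting and the rank bound together control the unbounded part. In the paper, local relative quasiconvexity and the rank bound enter through the Weidmann--Weller carrier graphs $\calA_i$ (\Cref{weidmann weller rewrite}). These have constants $C,\nu$ depending only on $G$ and $r$, and fall into finitely many equivalence classes. This fixes the essential vertex groups and essential edge labels exactly. It leaves the free-edge labels $g_e$ and the peripheral turns $g_e^{-1}g_{e'}$ unbounded, so uniform quasiconvexity of the $H_i$ themselves is never established.

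These unbounded quantities are controlled by a relative version of the Kapovich--Myasnikov algebraic-extension argument. After the rank induction of \Cref{free factor reduction}, one may assume no $H_i$ lies in a proper free factor of a later $H_j$. Then every free segment of $\calA_i$, and a connected family of turns at each trivial peripheral star, must be traversed by the $\bbA_i$-paths representing the fixed generators $h_1,\dots,h_n$ of $H_1$. The realizations of these paths are $\nu$-taut $C$-quasigeodesics of length at most $CM+C^2$, which gives the bound.

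Your treatment of parabolics also falls short. Noetherianity of $\bbZ^k$ makes the groups $A_c=H_i\cap P$ stabilize, but not the cosets $g_e^{-1}g_{e'}A_c$ recording how the rest of the graph attaches at a cusp. The paper bounds these using the taut BCP property (\Cref{upgraded BCP}) and \Cref{peripheral coset distance in quasiconvex}.

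Finally, finiteness of cores ``up to the $G$-action'' does not by itself stabilize a nested chain. You need finiteness of the actual data with a fixed basepoint, as in (P1)--(P5).
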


We note that this theorem is new even for hyperbolic groups, which are special case of toral relatively hyperbolic. 

The local quasiconvexity assumption in \Cref{main theorem rel Hyperbolic} is necessary. For example, in any ascending HNN-extension $$G=\gen{ F, t \mid tgt^{-1}=\varphi(g),\ g\in F },$$
where $\varphi:F\inj F$ has a proper image and $F$ is finitely generated, the bounded rank ascending chain $F< t^{-1}Ft< t^{-2}Ft^2<\dots$ does not stabilize. And there are many hyperbolic groups of this form.

To prove \Cref{main theorem 3-manifolds}, we begin by proving $\omega$ACC for the geometric pieces of the $3$-manifold. 
Complete finite-volume hyperbolic pieces are handled using \Cref{main theorem rel Hyperbolic}, while all other geometries are either Seifert-fibered spaces or Sol geometry and are handled in \Cref{no ascending in SFS}.
Finally we use the geometrization splitting and the following combination theorem to deduce that the fundamental group of a 3-manifold satisfies $\omega$ACC:

\begin{theoremA}
\label{main combination thm}
    Let $G$ be the fundamental group of a $k$-acylindrical, finite graph of groups. Suppose:
    \begin{itemize}
        \item The edge groups are finitely generated abelian groups.
        \item The vertex groups satisfy $\omega$ACC.
        \item $G$ is coherent.
        \item Torsion elements of $G$ have uniformly bounded order.
    \end{itemize} 
    Then $G$ satisfies $\omega$ACC.
\end{theoremA}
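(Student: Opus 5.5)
Let $H_1 \le H_2 \le \cdots$ be an ascending chain in $G$ with $\rank H_i \le r$, and let $H = \bigcup_i H_i$. The plan is to show that $H$ is finitely generated. Once this is known, finitely many generators of $H$ lie in some $H_n$, so the chain stabilizes. We work with the Bass--Serre tree $T$ of the splitting. Each $H_i$ acts on $T$, and by coherence each $H_i$ is finitely presented. Hence the induced graph-of-groups decomposition $H_i \backslash T_i$ has a finite core, where $T_i$ is a minimal $H_i$-invariant subtree, and its vertex groups $H_i \cap G_v^g$ are finitely generated. The first key step is a uniform bound: the induced splittings of the $H_i$ have complexity bounded in terms of $r$, $k$, and the torsion bound. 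Concretely, the number of edges in the core graph and the number of vertices with nontrivial vertex group should be bounded. Moreover, the ranks of the vertex groups $H_i \cap G_v^g$ should be bounded by a function of $r$.

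For the rank bound we would use $k$-acylindricity together with abelian edge groups. Edge stabilizers in $H_i$ are subgroups of finitely generated abelian groups, so their ranks are bounded by a constant $c$ depending only on $G$. A Bestvina--Feighn/Weidmann-type accessibility argument then applies: for $k$-acylindrical splittings, Weidmann's bound on the number of orbits of vertices of a finitely generated group acting acylindrically controls the reduced core graph of $H_i$ by a function of $r$ and $k$. The bounded torsion hypothesis is what lets acylindricity handle the finite edge stabilizers. Using the standard fact that a vertex group of a splitting with finitely generated edge groups has rank at most $\rank H_i$ plus the contributions of the incident edge groups, this gives $\rank(H_i \cap G_v^g) \le f(r)$.

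Next we pass to the limit. The increasing subtrees $T_i$ union to an $H$-invariant subtree $T_\infty$. Since the number of $H_i$-orbits of relevant vertices and edges is uniformly bounded, after passing to a subsequence the quotient graphs stabilize combinatorially: $H \backslash T_\infty$ has a finite core that is eventually realized by every $H_i$. For each vertex $v$ of this core, the groups $H_i \cap G_v^g$ form an ascending chain in a conjugate of the vertex group, of rank at most $f(r)$. Since vertex groups satisfy $\omega$ACC, each such chain stabilizes. The edge groups $H_i \cap G_e^g$ stabilize because finitely generated abelian groups are Noetherian. It follows that $H$ is the fundamental group of a finite graph of groups with finitely generated vertex groups, so $H$ is finitely generated.

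The main obstacle is making the limit step rigorous. The core of $H_i \backslash T_i$ may change with $i$: vertices can be identified, and new vertex orbits can appear and then merge. To control this, we would track the images of $H_i$-orbits in $H_{i+1}\backslash T$ and use the uniform complexity bound to argue that only finitely many mergers can happen. Each merger strictly decreases a bounded complexity, such as the number of edges plus the Betti number of the reduced core graph. After finitely many steps the combinatorial structure is frozen, and the $\omega$ACC argument on vertex groups applies.
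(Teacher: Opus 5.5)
\textbf{Gap: the uniform complexity bound.} Your first key step asserts that the complexity of the induced splittings $\bbG_{H_i}$ is bounded in terms of $r$ and $k$ alone. Neither tool you invoke gives this. Weidmann's vertex bound (\Cref{weidmann indecomposable vertex bound}) applies only to non-cyclic \emph{freely indecomposable} groups, and your $H_i$ may be freely decomposable. The Bestvina--Feighn bound (\Cref{bestvina feign}) is a constant $\gamma(H_i)$ that depends on the finitely presented group $H_i$ itself, not on its rank. So knowing that each $H_i$ is finitely presented gives no bound uniform in $i$.

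Tellingly, coherence does no work in your outline. A finite core with finitely generated vertex groups already follows from finite generation of $H_i$ and of the abelian edge groups. Likewise, bounded torsion has nothing to do with finite edge stabilizers. Your \emph{standard fact} on vertex-group ranks is also not standard. Without acylindricity it fails, and with it, it is \Cref{simplified weidemann rank bound}. That inequality bounds $\sum_v \rank$ only once $\sum_e \rank G_e$, i.e.\ $\varepsilon'$, is bounded. So everything downstream rests on the unproved bound.

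\textbf{Missing idea: reduce to isomorphic subgroups first.} This is \Cref{from bounded rank to isomorphic}. Take a non-stabilizing chain of minimal rank $r$ and write each $H_i$ in Grushko form. The non-cyclic factors of $H_i$ lie in conjugates of non-cyclic factors of $H_{i+1}$. If this matching fails to be bijective infinitely often, \Cref{reduce rank by conjugates} produces an interleaved proper chain of rank $<r$, a contradiction. Once the matching is bijective, the freely indecomposable factors form ascending chains of bounded complexity (\Cref{weidmann indecomposable complexity bound}), so they stabilize. Bounded torsion then leaves finitely many isomorphism types for the cyclic factors. After passing to a subsequence, all $H_i$ are isomorphic to a single group $H$.

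Only now does coherence enter. $H$ is finitely presented, so \Cref{nontrivial edge bound} gives $\varepsilon'(\bbG_{H_i})\le \eta(H,k)$ uniformly. That theorem is Bestvina--Feighn plus an acylindricity argument to control the segments collapsed in the reduction.

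\textbf{The limit step also needs repair, even granting the bound.} The total number of edges of $\bfG_{H_i}$ is not controlled by any accessibility result, since trivial-edge segments can be arbitrarily long for bounded-rank subgroups. Moreover, when $\bfG_{H_i}\to\bfG_{H_{i+1}}$ is not surjective, new edge orbits appear, so counting mergers is not monotone. The paper handles this in \Cref{surjective cores or elliptic}. It inserts $\pi_1$ of the image subgraph, which has strictly smaller complexity $(\varepsilon',\rank)$ and rank bounded by \Cref{complexity bounds rank}, and inducts on complexity. The elliptic case is treated separately, using acylindricity to find a common fixed vertex (\Cref{bounded complexity implies ACC}).
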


\subsection*{Acknowledgments.} 
The authors would like to thank Richard Weidmann for several insightful conversations and his patience in explaining the results of \cite{WeidWell}.








\section{Preliminaries}
\label{sec: preliminaries}
\subsection{Graph of groups}
We recall definitions regarding graphs of groups. For a more thorough introduction, the reader can consult Serre~\cite{serre2002trees}.

\begin{definition}[Graphs]
    A \emph{graph} $\bfA$ consists of a set of \emph{vertices} $V\bfA$, a set of \emph{edges} $E\bfA$, maps $e\mapsto e_+$ and $e\mapsto e_-$ from $E\bfA$ to $V \bfA$, and a fixed-point-free involution $e\mapsto e\ii $ of $E\bfA$ such that $(e\ii)_+ = e_-$ for all $e\in E\bfA$. A pair $\{e, e^{-1}\}$ is a \emph{geometric edge}, and we denote the set of geometric edges by $\mathcal E\mathbf A$.
    
    The \emph{degree} of $v\in V\bfA$ is $\deg(v) = |\{e\in E\bfA \;|\;e_-=v\}|$.
\end{definition}

\begin{definition}[Paths]
    A \emph{path} $p$ in $\bfA$ is a sequence $(v_0,e_1,v_1, \dots,e_n,v_n)$ such that 
    
    \begin{itemize}
        \item $v_j\in V\bfA$ and $e_i\in E\bfA$ for each $j=0,\dots, n$ and $i=1,\dots n$, and
        \item $(e_i)_-=v_{i-1}$ and $(e_i)_+=v_i$ for each $i=1,\dots, n$.
    \end{itemize}

    The \emph{endpoints} of $p$ are the vertices $v_0$, $v_n$ and are denoted $p_-$, $p_+$. A path is determined by its edges, so we will sometimes specify a path by a sequence $(e_1,\dots, e_n)$ such that $(e_i)_+=(e_{i+1})_-$ for $i=1,\dots, n-1$. The \emph{length} $\len(p)$ of $p$ is the number of edges $n$. 
\end{definition}

\begin{definition}[Graph of groups]
    A \emph{graph of groups} $\bbA$ consists of an \emph{underlying graph} $\bfA$, \emph{vertex groups} $A_v$ for $v\in V\bfA$, \emph{edge groups} $A_e$ for $e\in E\bfA$ such that $A_e = A_{e\ii}$, and monomorphisms $A_e \to A_{e_+}$ whose image we denote by $(A_e)_+$. Similarly, we denote by $(A_e)_-$ the image of $A_{e \ii}\to A_{e_-}$.
\end{definition}

\begin{definition}[Orientation]
    Let $\bfA$ be a graph. An \emph{orientation} of $\bfA$ is a choice of edge from each pair $\{e,e \ii\}$ for each $e\in E\bfA$. The chosen edge from each pair is \emph{oriented}. A graph of groups is \emph{oriented} if its underlying graph has an orientation.
\end{definition}

\begin{definition}[$\bbA$-path]
    Let $(v_0,e_1,v_1,\dots,e_n,v_n)$ be a path in $\bf A$. An \emph{$\bbA$-path} with \emph{underlying path} $(v_0,e_1,v_1,\dots, e_n,v_n)$ is a sequence $(a_0,e_1,a_1,\dots,e_n,a_n)$ such that $a_i\in A_{v_i}$ for $i=0,\dots, n$.
\end{definition}

\begin{definition}[Fundamental group $\pi_1(\bbA, v_0)$]
    Let $\sim$ be the equivalence relation on $\bbA$-paths generated by $$(\dots, a_{i-1}, e_i, a_i, \dots)\sim (\dots, a_{i-1}c, e_i, c^{-1}a_i, \dots)$$ for $c\in A_{e_i}$, $i=1,\dots, n$ and $$(\dots, e_{i-1}, 1, e_i,\dots)\sim (\dots, e_{i-2}, a_{i-2}a_i, e_{i+1}, \dots)$$ when $e_{i-1}= e_i\ii$. For an $\bbA$-path $t$, let $[t]$ denote its $\sim$-equivalence class. The \emph{fundamental group} $A=\pi_1(\bbA,v_0)$ of $\bbA$ is the group whose elements are $[t]$ for $\bbA$-paths $t$ whose underlying paths begin and end on $v_0$. Multiplication is given by concatenation.
\end{definition}

\begin{definition}[Reduced $\bbA$-path]
    An $\bbA$-path $(a_0, e_1, a_1, \dots, e_n, a_n)$ is \emph{reduced} if $a_{i}\notin (A_{e_{i}})_+$ whenever $e_{i+1}=e_i \ii$.
\end{definition}

\begin{remark}[Bass-Serre tree]
The \emph{Bass-Serre tree} $T_{\bbA}$ is a tree with an action $A\actson T_{\bbA}$ without edge-inversions such that $A\backslash T_\bbA \cong \bfA$ and the vertex (resp. edge) stabilizers in $A$ are isomorphic to the corresponding vertex (resp. edge) groups in $\bbA$. 


    Conversely, let $A$ be an arbitrary group acting on a tree $T$ without edge-inversions. Then the quotient $T/A$ can be given a graph of groups structure so that $T$ is the associated Bass-Serre tree.  
\end{remark}

\begin{convention} Throughout the paper all group actions on trees will be assumed to be without edge-inversions.
\end{convention}

\begin{definition}[Acylindricity]
    The graph of groups $\bbA$ is \emph{$k$-acylindrical} if the pointwise stabilizer of any path of length $k+1$ in $T_{\bbA}$ is trivial.
\end{definition}

\begin{definition}[Minimal]
    The action $A\actson T_\bbA$ is \emph{minimal} if there exists no $A$-invariant proper subtree. 
    Similarly, $A\actson T_\bbA$ is \emph{minimal relative to the vertices $\tild v_1,\dots ,\tild v_n$} if there exists no $A$-invariant proper subtree containing $\tild v_1,\dots,\tild v_n$.
    
    The graph of groups $\bbA$ is \emph{minimal} if the associated action $A\actson T_\bbA$ is minimal.
    Similarly $\bbA$ is \emph{minimal relative to the vertices $v_1,\dots,v_n$} if it is minimal relative to lifts $\tild v_1,\dots,\tild v_n$ of $v_1,\dots,v_n$ to $T_\bbA$.
\end{definition}

\begin{remark}\label{minimality criterion}
    Suppose $\bbA$ is a finite graph of groups. The action $A\actson T_\bbA$ is minimal if and only if  $(A_e)_+$ is a proper subgroup of $A_{e_+}$ whenever $e_+$ is a degree one vertex.
    Hence, by removing a sequence of degree one vertices $e_+$ with $A_{e_+}=(A_e)_+$ one obtains the graph of groups associated to the quotient of the minimal $A$-invariant subtree of the action $A\actson T$.
\end{remark}

\begin{definition}[Subgroup graph of groups]
    Given a subgroup $A'\le A$, we let $T_{A'}$ denote a minimal subtree of $A'\actson T_{\bbA}$ when it exists, and we let $\bbA'$ be the graph of groups structure on $T_{A'}/A'$ 
\end{definition}

\subsection{Relatively hyperbolic groups}

We briefly recall the definition of relatively hyperbolic groups relevant to us.
For a more thorough treatment see~\cite{Osin}.

Let $G$ be a group generated by a finite symmetric set $X$, and let $\bbP=\{P_1,\dots, P_n\}$ be a finite set of subgroups of $G$. 
Let $\calP$ be the disjoint union of copies of the $P_i\in \bbP$. 
We will consider the \emph{Cayley graph} $\Gamma = \Cay(G,X)$ of $G$ with respect to the generating set $X$, and the \emph{relative Cayley graph} $\widehat\Gamma=\Cay(G,X\cup \calP)$ of $G$ with respect to $X \cup \calP$.
\begin{definition}[$X$ and $X\cup\calP$-lengths]
    We denote by $|\cdot|_X$, $d_X$ and $|\cdot|_{X\cup \calP}$, $d_{X\cup\calP}$ the corresponding norms and distance functions.
For a path $s=(v_0,e_1,\dots,e_n,v_n)$ in $\widehat\Gamma$ define its \emph{$X$-length} by $\len_X(s) := \sum_{i=1}^n d_X(v_{i-1},v_{i})$.
\end{definition}

\begin{definition}[Edge labels]
    For an edge $e$ of $\widehat\Gamma$, we denote its \emph{label} by $\lab(e) \in X\cup \calP$. Similarly the label $\lab(p)$ of a path $p$ in $\widehat \Gamma$ is the product of labels of its edges. 
\end{definition}
\begin{definition}[Peripheral components]
    A \emph{$P_i$-path} is a path whose edge labels are in $P_i$.
    A \emph{$P_i$-component} of a path $p$ is a non-trivial maximal $P_i$-subpath of $p$.
    A vertex in the interior of a component is \emph{inner}, all other vertices are \emph{phase} vertices.
    Two $P_i$-paths are \emph{connected} if their endpoints are connected by an edge in $\widehat\Gamma$ with a label in $P_i$. 
    In other words, they are connected if they belong to the same coset of $P_i$. 
    A $P_i$-component of a path $p$ is \emph{isolated} if it is not connected to any other $P_i$-component. 
\end{definition}
\begin{definition}[Non-backtracking]
    The path $p$ is \emph{non-backtracking} if every $P_i$-component is isolated for each $i$.
\end{definition}

\begin{remark}\label{rem: removing backtracks}
    Every path $p$ can be replaced by a non-backtracking path $p'$ with the same endpoints: If $p=b_1qb_2$ where $q$ is a proper subpath of length at least two with endpoints on the same peripheral coset, then replacing $q$ with a single edge decreases the length of $p$. 
    This process stops at $p'$. 
    Additionally:
    \begin{itemize}
    \item We can write  $p=a_1p_1a_2\dots p_{n-1}a_n$ and $p'=a_1e_1a_2\dots e_{n-1}a_n$ where the $e_i$ are the peripheral edges of $p'$.
    \item If $p$ has no $P$-component with endpoints in $gP$, then neither does $p'$.
    \item If $p$ is $\lambda$-quasi-geodesic then so is $p'$.
    \item The vertex set of $p'$ is a subset of the vertex set of $p$.
    \end{itemize}
\end{remark}

\begin{definition}[Bounded Coset Penetration (BCP)]
\label{def:bounded coset penetration}
    The pair $(G,\bbP)$ satisfies the \emph{bounded coset penetration property} (BCP property) if for each $\lambda\geq 1$ there exists $\varepsilon =\varepsilon_{BCP}(\lambda, G, \bbP, X)$ such that if $p$, $q$ are non-backtracking $\lambda$-quasi-geodesics in $\widehat\Gamma$ with $p_-=q_-$ and $p_+=q_+$, then for any $P_i$-component $s$ of $p$, we have
    \begin{enumerate}
        \item $d_X(s_-,s_+)<\varepsilon$ if there exists no $P_i$-component of $q$ connected to $s$, or
        \item there exists a unique $P_i$-component $t$ of $q$ connected to $s$, and $d_X(s_-,t_-),d_X(s_+,t_+)<\varepsilon$. 
    \end{enumerate}
\end{definition}


\begin{definition}
\label{def:relatively hyperbolic}
    Let $X$ be a finite, symmetric generating set of $G$, and let $\bbP$ be a collection of subgroups of $G$. We say $G$ is \emph{hyperbolic relative to $\bbP$} if $(G,\bbP)$ satisfies the BCP property and $\widehat \Gamma$ is hyperbolic. 
    A \emph{parabolic subgroup} is a subgroup conjugate to an element $P\in \bbP$, and a \emph{peripheral coset} is a coset of the form $gP$ for some $g\in G, P\in \bbP$.
    The group $G$ is \emph{toral relatively hyperbolic} if it is torsion-free and hyperbolic relative to a set of finitely generated free abelian subgroups.
\end{definition}


We now introduce taut quasi-geodesics. Taut quasi-geodesics can backtrack yet still satisfy a variant of the BCP property (\Cref{upgraded BCP}). 

\begin{definition}[Taut quasi-geodesic]
    Let $\widehat \Gamma= \Cay(G, X\cup \calP)$ be the relative Cayley graph of a relatively hyperbolic group. A quasi-geodesic $s$ in $\widehat \Gamma$ is \emph{$\nu$-taut} if $\min\{|e|_X, |e'|_X\}<\nu$ for any two connected $P$-edges $e$, $e'$ of $s$. 
\end{definition}

It may be instructive to note the relationship between the definition of taut quasi-geodesic and Lemma 1.18(3) in Weidman-Weller~\cite{WeidWell}, but this is not essential to our presentation.

\begin{lemma}
\label{peripheral endpoint bounds}
    For every $\nu, \ell$ there exists $\alpha=\alpha(\nu ,\ell,G,\bbP,X)$ such that if $s$ is a $\nu$-taut path of length $\len(s)=\ell$ in $\widehat \Gamma$ and both endpoints of $s$ are in $gP$, for some $g\in G$ and $P\in \bbP$, then $$M(s)-\alpha \le |s|_X\le \len_X(s)\le M(s) + \alpha$$ where $M(s)$ is the maximum $X$-length of an edge in $s$ with both endpoints in $gP$ if such an edge exists, or $M(s)=0$ otherwise.
\end{lemma}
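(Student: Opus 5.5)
The middle inequality $|s|_X\le \len_X(s)$ is the triangle inequality. The other two both follow from one claim: for fixed $\nu,\ell$ there is $\beta=\beta(\nu,\ell)$ such that the edges of $s$ \emph{other than} one edge realizing $M(s)$ have total $X$-length at most $\beta$. Indeed, granting this, $\len_X(s)\le M(s)+\beta$. Also, by the triangle inequality along $s$, $|s|_X\ge M(s)-\beta$. So $\alpha=\beta$ works.

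I will prove the claim by induction on $\ell$, using Osin's isolated-component estimate (\cite{Osin}, Lemma 2.27): there is a constant $K$ such that in any cycle of length $n$ in $\widehat\Gamma$, the isolated components have total $X$-length at most $Kn$.

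Mark the vertices of $s$ that lie in $gP$; both endpoints are marked. These cut $s$ into subpaths $\sigma_1,\dots,\sigma_m$ whose endpoints are in $gP$ and whose interior vertices are not. Each $\sigma_j$ is of one of two types.
\begin{enumerate}
\item A single edge with both endpoints in $gP$. Such edges are exactly the edges counted in $M(s)$. By $\nu$-tautness, all but at most one of them have $X$-length $<\nu$, so together they contribute at most $M(s)+\ell\nu$.
\item A path of length $\ge 2$ with no interior vertex in $gP$. In particular $\sigma_j$ contains no edge in $gP$. Close it up with a single $gP$-edge $f$ from $(\sigma_j)_+$ to $(\sigma_j)_-$, giving a cycle $\sigma_j f$ of length $\le \ell+1$.
\end{enumerate}

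It remains to bound $\len_X(\sigma_j)$ in the second case. If every component of $\sigma_j f$ is isolated, Osin's lemma bounds both $|f|_X=d_X((\sigma_j)_-,(\sigma_j)_+)$ and the $X$-lengths of all peripheral edges of $\sigma_j$ by $K(\ell+1)$. The $X$-edges of $\sigma_j$ contribute at most $\ell$.

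In general, $\sigma_j$ may contain several edges lying in a common coset $hQ\neq gP$. For each such coset, let $u,w$ be the first and last vertices of $\sigma_j$ in $hQ$. If $u,w$ are not the endpoints of one edge, the subpath $\tau$ between them is a shorter $\nu$-taut path with endpoints in $hQ$. By induction, $\len_X(\tau)\le M(\tau)+\alpha(\nu,\ell-1)$. Moreover $M(\tau)$ is realized by at most one large $hQ$-edge, by tautness.

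Replace each such maximal $\tau$ by a single $hQ$-edge, as in \Cref{rem: removing backtracks}. This produces a path $\sigma_j'$ with the same endpoints in which every component of $\sigma_j' f$ is isolated. The isolated case then bounds the $X$-length of each edge of $\sigma_j'$ by $K(\ell+1)$. Each replaced $\tau$ has $X$-length at most $|\tau|_X+2\alpha(\nu,\ell-1)\le K(\ell+1)+2\alpha(\nu,\ell-1)$, using the lower bound from induction applied to $\tau$. Summing over at most $\ell$ pieces gives $\len_X(\sigma_j)\le \beta'(\nu,\ell)$, which is the required bound.

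Summing over the $\sigma_j$ gives $\beta\le \ell\nu+\ell\beta'$. The base case $\ell=1$ is trivial, since then $s$ is a single edge and either $M(s)=|s|_X$ or $|s|_X=1$.

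The main obstacle is the bookkeeping in the second case. One must check that after collapsing the repeated cosets, the resulting cycle really has only isolated components. In particular, collapsing for one coset must not create new connections for another coset, and nested or overlapping cosets must be handled, for instance by always choosing an outermost pair $u,w$ first. One must also check that collapsing preserves tautness, so that the induction applies. I would first try ordering the collapses by the position of $u$ along $\sigma_j$ and collapsing only maximal $\tau$, using that distinct peripheral cosets meet in at most one vertex. That fact holds because the group is toral, since then the parabolic subgroups are malnormal.
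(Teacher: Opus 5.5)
Your argument is sound and follows essentially the paper's route. The shared steps are: induction on $\ell$; tautness to control the edges lying in $gP$; collapsing backtracking subpaths as in \Cref{rem: removing backtracks}; a bounded-length relative-hyperbolicity estimate on the collapsed edges; and the two-sided inductive bound $\len_X(\tau)\le |\tau|_X+2\alpha(\nu,\ell-1)$ to recover the $X$-length of each collapsed piece.

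The differences are minor. You cut $s$ at every vertex in $gP$ instead of at a longest $gP$-edge, which also rules out the collapse creating a new $gP$-edge, a case the paper's Case 2 passes over. You use Osin's isolated-components lemma where the paper applies BCP to the collapsed path against the single $gP$-edge. The bookkeeping you flag is what the remark's iterative collapse of proper subpaths takes care of: the final collapsed pieces are disjoint subpaths of $s$, so they are automatically $\nu$-taut and no tautness-preservation check is needed.
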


\begin{proof} 
    We prove the inequalities by induction on $\ell$. 
    Let $\ell >0$. Let $s$ be a path as in the lemma. We divide into two cases:

    \textbf{Case 1. The path $s$ has an edge with both endpoints in $gP$.} Let $e$ be such an edge of maximal $X$-length, i.e. $|e|_X=M(s)$. 
    Write $s=s_1 e s_2$. By the induction hypothesis there exists $\alpha'$ such that $\len_X(s_i)\le M(s_i)+\alpha'$.
    Every $P$-edge of $s$ with both endpoints in $gP$ is $P$-connected to $e$; since $s$ is $\nu$-taut, $M(s_i) \le \nu$. 
    Thus
    \begin{align*}
        |s|_X &\ge |e|_X - |s_1|_X - |s_2|_X\\
        &\ge |e|_X - \len_X(s_1)-\len_X(s_2)\\
        &\ge M(s) -2(\nu+\alpha'),
    \end{align*}
    and also
    \begin{align*}
        \len_X(s)&\le \len_X(s_1)+|e|_X+\len_X(s_2)\\
        &\le |e|_X+2(\nu+\alpha') \\
        &= M(s)+2(\nu+\alpha').
    \end{align*}

    \textbf{Case 2. The path $s$ does not have an edge with both endpoints in $gP$.} 
    In this case $M(s)=0$, so the lower bound $M(s)\le |s|_X$ holds trivially.
    For the upper bound, write $s = a_1s_1a_2\dots s_{n-1}a_n$ and $s'=a_1e_1a_2\dots e_{n-1}a_n$ as in \Cref{rem: removing backtracks}, 
    where the $e_i$ are the peripheral edges of the non-backtracking path $s'$.
    The path $s'$ is non-backtracking of length at most $\ell$, hence trivially an $\ell$-quasigeodesic.
    By the BCP property (\Cref{def:bounded coset penetration}) applied to $s'$ and the $P$-edge joining its endpoints, there exists $\varepsilon=\varepsilon_{BCP}(\ell,G,\bbP,X)$, such that $|e_i|_X\le \varepsilon$.
    By the induction hypothesis there exists $\alpha'$ such that $$\len_X(s_i)\le M(s_i)+\alpha'\le |s_i|_X+2\alpha' = |e_i|_X+2\alpha' \le \varepsilon +2\alpha'.$$
    Additionally, since $a_i$ contains no peripheral edges we have $\len_X(a_i) = \len(a_i).$
    By the above,  
    \begin{align*}
        \len_X(s) &\le \sum _{i=1}^n\len_X (a_i) + \sum_{i=1}^{n-1}\len_X(s_i)\\
        &\le \len(s) + n (\varepsilon+2\alpha') \\
        &\le \ell(1+\varepsilon +2\alpha')\\
        &=M(s)+\ell(1+\varepsilon +2\alpha')
    \end{align*} 

    In both cases we have the desired inequalities $$M(s)-\alpha\le |s|_X\le \len_X(s)\le M(s)+\alpha$$ for $\alpha  = \max\{2(\nu+\alpha'),\ell(1+\varepsilon +2\alpha')\}$.
\end{proof}

\begin{lemma}[Taut BCP property]\label{upgraded BCP}
    Let $G$ be hyperbolic relative to $\bbP$ with finite, symmetric generating set $X$. There exists $\varepsilon=\varepsilon(\lambda, \nu, G, \bbP, X)$ such that the following holds: 
    
     Let $p$, $q$ be $\nu$-taut $\lambda$-quasi-geodesics in $\widehat \Gamma$ with $p_-=q_-$ and $p_+=q_+$. For $g\in G$, $P\in \bbP$, let $p'$ be the maximal subpath of $p$ with $p'_-, p'_+\in gP$. Either $\len_X(p')\le 2\varepsilon$, or $p'$ contains an edge $e$ with $e_-, e_+\in gP$ and $|e|_X\geq \varepsilon$. In the latter case, $q$ contains an edge $e'$ with $e'_-, e'_+\in gP$ and $d_X(e'_-, e_-), d_X(e'_+, e_+)<\varepsilon$.
\end{lemma}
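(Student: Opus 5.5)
The plan is to reduce to the classical BCP property (\Cref{def:bounded coset penetration}) by removing backtracks, and to use \Cref{peripheral endpoint bounds} to control the $X$-length of everything that is removed. Since $p$ is a $\lambda$-quasi-geodesic, a maximal subpath $p'$ with both endpoints in $gP$ has combinatorial length bounded in terms of $\lambda$ alone. Indeed, $d_{X\cup\calP}(p'_-,p'_+)\le 1$, so $\len(p')\le \lambda(1+\lambda)=:\ell$. Hence \Cref{peripheral endpoint bounds} applies to $p'$ with $\alpha=\alpha(\nu,\ell)$, giving
\[
M(p')-\alpha\le |p'|_X\le \len_X(p')\le M(p')+\alpha .
\]
We will choose $\varepsilon$ large compared with $\alpha$ and with $\varepsilon_{BCP}(\lambda,G,\bbP,X)$. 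If $p'$ has no edge with both endpoints in $gP$ of $X$-length $\ge\varepsilon$, then $M(p')<\varepsilon$, so $\len_X(p')\le \varepsilon+\alpha\le 2\varepsilon$ once $\varepsilon\ge\alpha$. This gives the first alternative.

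For the second alternative, let $e\subset p'$ have $e_\pm\in gP$ and $|e|_X\ge\varepsilon$. We pass to non-backtracking paths $\bar p,\bar q$ as in \Cref{rem: removing backtracks}. These are still $\lambda$-quasi-geodesics, and their vertex sets are subsets of those of $p,q$. The key observation is that the backtrack removal, applied to $p$, merges the whole subpath $p'$, or a subpath containing $e$ with endpoints in $gP$, into a single $P$-edge $\bar e$ of $\bar p$ connected to $gP$. Its endpoints are vertices of $p$ lying in $gP$, hence vertices of $p'$, since $p'$ is the maximal such subpath. By the lower bound of \Cref{peripheral endpoint bounds}, applied to the subpath of $p$ replaced by $\bar e$, we get $|\bar e|_X\ge M-\alpha\ge |e|_X-\alpha$. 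Choosing $\varepsilon>\alpha+\varepsilon_{BCP}$, the classical BCP property then forbids $\bar e$ from being unconnected. So $\bar q$ has a $P$-component connected to $\bar e$, with endpoints $\varepsilon_{BCP}$-close to those of $\bar e$; in $\bar q$ this is a single edge $\bar e'$, being an isolated component.

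It remains to transfer $\bar e'$ back to an edge $e'$ of $q$ close to $e$. The edge $\bar e'$ replaced a subpath $q''$ of $q$ with endpoints in $gP$ and of bounded length. By \Cref{peripheral endpoint bounds}, $q''$ has an edge $e'$ in $gP$ with $|e'|_X\ge|\bar e'|_X-\alpha$, and the subpaths of $q''$ before and after $e'$ have $X$-length at most $\nu+\alpha$ by tautness. The same reasoning inside $p'$ shows that $e_\pm$ are within $\nu+\alpha$ of $\bar e_\pm$, since $e$ must be the unique long $gP$-edge there by tautness. Combining these estimates, $d_X(e_\pm,e'_\pm)\le \varepsilon_{BCP}+2(\nu+\alpha)$, so it suffices to take $\varepsilon=\varepsilon_{BCP}+2(\nu+\alpha)+\alpha+1$ and check that this is consistent with the earlier requirements.

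The main obstacle I expect is the bookkeeping in the key observation. The backtrack-removal procedure may act on subpaths whose endpoints lie in $gP$ but that straddle other cosets, and one must ensure that the edge $\bar e$ covering $e$ really has endpoints inside $p'$ with controlled offsets. I would handle this by first performing the replacement of the maximal $gP$-subpaths of $p$ and $q$ directly, by single edges. I would then argue that the remaining removals do not touch these edges, because the resulting $gP$-edge is the only $gP$-component.
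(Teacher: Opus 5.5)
Your outline matches the paper's proof in structure: the bound $\len(p')\le\lambda^2+\lambda$, the first alternative via \Cref{peripheral endpoint bounds}, collapsing $p'$ to a single $P$-edge $\bar e$, applying classical BCP to non-backtracking versions, and pulling $\bar e'$ back to a long edge $e'$ of $q$ using the lemma and tautness. There is, however, a genuine gap at the step you flag as the main obstacle, and it is the crux.

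After replacing $p'$ by $\bar e$ to get $\hat p$, you must show that the remaining removals of \Cref{rem: removing backtracks} never absorb $\bar e$. Your reason, that $\bar e$ is the only $gP$-component, rules out $gP$-backtracks only. It says nothing about a subpath $p''$ of $\hat p$ properly containing $\bar e$ whose endpoints lie in a \emph{different} coset $g'P'$. Such a $p''$ would be collapsed to a single $P'$-edge, so $\bar e$ disappears. Then $\bar p$ may have no component in $gP$ at all, and BCP tells you nothing about $e$.

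Excluding this genuinely uses that $\bar e$ is long; a short $gP$-edge can certainly be swallowed this way. The missing argument is as follows.
\begin{itemize}
\item $\hat p$ is still a $\nu$-taut $\lambda$-quasi-geodesic: collapsing $p'$ is itself a backtrack removal, and $\bar e$ is the only edge of $\hat p$ with endpoints in $gP$. Hence $\len(p'')\le\lambda^2+\lambda$.
\item Choosing $p''$ so that only its endpoints lie in $g'P'$ gives $M(p'')=0$.
\item The upper bound in \Cref{peripheral endpoint bounds} then yields $|\bar e|_X\le\len_X(p'')\le\alpha$. This contradicts $|\bar e|_X\ge|e|_X-\alpha\ge\varepsilon-\alpha$ once $\varepsilon>2\alpha$.
\end{itemize}
At any later stage of the removal process, a subpath containing $\bar e$ has the same endpoints as some subpath of $\hat p$ containing $\bar e$. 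So this argument lets you take $\bar p$ to contain $\bar e$.

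A smaller issue is your constant $\varepsilon=\varepsilon_{BCP}+2(\nu+\alpha)+\alpha+1$, which does not make the pull-back step work. To single out a unique long edge $e'$ in $q''$ by tautness you need $M(q'')>\nu$. You only know $M(q'')\ge|\bar e'|_X-\alpha>|\bar e|_X-2\varepsilon_{BCP}-\alpha\ge\varepsilon-2\alpha-2\varepsilon_{BCP}$, and this need not exceed $\nu$ when $\varepsilon_{BCP}$ is large. Taking, e.g., $\varepsilon=3(\nu+\alpha+\varepsilon_{BCP})$, as the paper does, fixes this and still gives $d_X(e_\pm,e'_\pm)\le\varepsilon_{BCP}+2(\nu+\alpha)<\varepsilon$.
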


\begin{proof}
    Set $\varepsilon=3(\nu+\alpha+\varepsilon_{BCP})$ with $\alpha=\alpha(\nu,\lambda^2+\lambda,G,\bbP,X)$ from \Cref{peripheral endpoint bounds} and $\varepsilon_{BCP}=\varepsilon_{BCP}(\lambda, G, \bbP, X)$ from \Cref{def:bounded coset penetration}.
    Let $p$ and $p'$ be as in the statement of the lemma.
    Since $p'$ is $\lambda$-quasigeodesic and $d(p'_-, p'_+)=1$, we have $\len(p')\le \lambda^2+\lambda$. 
    Thus $\len_X(p')\leq M(p')+\alpha$ by \Cref{peripheral endpoint bounds}. 

    If $|e|_X\le\varepsilon$ for each $P$-edge of $p'$ in $gP$, then this bound becomes $\len_X(p')\le\varepsilon+\alpha\le 2\varepsilon$. 

    Otherwise, by $\nu$-tautness, there exists a unique $P$-edge $e$ of $p'$ with $|e|_X>\varepsilon$ since $\varepsilon>\nu$. Write $p'=p_1ep_2$. 
    As in the previous case, $\len_X(p_1), \len_X(p_2) < \nu+\alpha$. Since $|e|_X>\varepsilon$, then the $P$-edge $\bar e$ joining the endpoints of $p'$ satisfies $$|\bar e|_X\ge |e|_X-\len_X(p_1)-\len_X(p_2)> 3\varepsilon_{BCP}+\alpha+\nu.$$
    
    Let $\hat p$ be the path obtained from $p$ by replacing $p'$ with $\bar e$. 
    Note $\hat p$ is still $\nu$-taut. 
    We claim no subpath $p''$ of $\hat p$ containing $\bar e$ has both endpoints on the same peripheral coset: 
    If there exists $p''\supset \bar e$ with its endpoints on some $g'P'$, then we can choose such $p''$ so that only its endpoints meet $g'P'$.  
    Then by \Cref{peripheral endpoint bounds} applied to $p''$ and $g'P'$, we obtain $|\bar e|_X\leq \len_X(p'')\leq \alpha$, a contradiction. 
    Thus the path $\bar p$ obtained from $p$ as in \Cref{rem: removing backtracks} can be taken to contain $\bar e$. 

    Let $\bar q$ be obtained from $q$ as in \Cref{rem: removing backtracks}. 
    By the BCP property, $\bar q$ contains an edge $\bar e'$ in $gP$ with $d_X(\bar e'_-, \bar e_-), d_X(\bar e'_+, \bar e_+)<\varepsilon_{BCP}$ and thus $|\bar e'|_X>\varepsilon_{BCP}+\alpha+\nu$. 
    Let $q'$ be the subpath of $q$ replaced by $\bar e'$. 
    Since $q'$ is $\lambda$-quasigeodesic and $d(q'_{-},q'_+) = 1$, we have $\len(q')\le \lambda^2+\lambda$.
    By \Cref{peripheral endpoint bounds} applied to $q'$ and $gP$, $$\varepsilon_{BCP} +\nu+\alpha< |\bar e'|_X=|q'|_X\le M(q')+\alpha $$
    Therefore, $M(q')> \varepsilon_{BCP}+\nu$.
    By $\nu$-tautness, there exists a unique $P$-edge $e'$ of $q'$ with endpoints in $gP$ such that $|e'|_X=M(q')$. Write $q'=q_1e'q_2$.
    By \Cref{peripheral endpoint bounds} applied to each $q_i$ and $gP$, $|q_i|_X\le \nu+\alpha$.
    We have
    $$d_X(e_-, e'_-)\le |p_1|_X + \varepsilon_{BCP} + |q_1|_X  = \varepsilon_{BCP}+2(\nu + \alpha) <\varepsilon, $$
    and similarly $d_X(e_+, e'_+)<\varepsilon$.
\end{proof}

\subsection{Relatively quasiconvex subgroups}
\begin{definition}[Relative quasiconvexity]
Let $(G,\bbP)$ be relatively hyperbolic. A subgroup $H\le G$ is $\nu$-\emph{relatively quasiconvex} if the phase vertices of any geodesic path in $\widehat\Gamma$ between vertices in $H$ are in the $\nu$-neighborhood of $H$ with respect to the metric $d_X$.
\end{definition}

\begin{theorem}
\label{thm:induced structure}
    Let $H$ be a relatively quasiconvex subgroup of a relatively hyperbolic group $(G,\bbP)$. Then there exists a finite, symmetric generating set $Y$ of $H$, and the set
    $$\{H\cap P_i^g \;|\; g\in G, P_i\in \bbP, |H\cap P^g|=\infty\}$$
    consists of finitely many $H$-conjugacy classes of subgroups of $H$. For any set of representatives $\bbO$, the pair $(H,\bbO)$ is relatively hyperbolic, and the inclusion $(H,d_{Y\cup \calO})\to (G,d_{X\cup \calP})$ is a quasi-isometric embedding. 
\end{theorem}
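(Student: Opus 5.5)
We would prove the statement in three stages, following the strategy of Hruska (and Osin) for induced peripheral structures on relatively quasiconvex subgroups. The input is the definition of $\nu$-relative quasiconvexity together with the BCP property (\Cref{def:bounded coset penetration}). Fix $\nu$ so that $H$ is $\nu$-relatively quasiconvex.

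\textbf{Finite generation.} Let $Y$ be the set of elements $h\in H$ that satisfy one of two conditions. Either $|h|_X\le 2\nu+1$, or $h$ has the form $h=u\,p\,w$ with $|u|_X,|w|_X\le\nu$, $p\in P_i^{g}$ for some $P_i$ and $g$, and $h$ lying in one of the infinite intersections $H\cap P_i^{g'}$. The symmetric set $Y$ is chosen so that it is finite modulo the parabolic part. Take any geodesic $\gamma$ in $\widehat\Gamma$ from $1$ to $h\in H$. Each phase vertex of $\gamma$ lies within $d_X$-distance $\nu$ of some element of $H$. Consecutive phase vertices are joined either by an $X$-edge, giving an element of $H$ of $X$-length at most $2\nu+1$, or by a peripheral edge.

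\textbf{Peripheral structure.} The main step is the peripheral edges. Their endpoints lie within $\nu$ of $H$, so the corresponding coset $gP_i$ meets the $\nu$-neighbourhood of $H$ in at least two points. We then show that only finitely many double cosets $H gP_i$ give rise to infinite intersections $H\cap P_i^g$, up to $H$-conjugacy. The argument is the standard one: if $|H\cap P_i^g|=\infty$, we may take $g$ with $|g|_X$ bounded in terms of $\nu$ and the BCP constant. To do so, choose $h\in H\cap gP_ig^{-1}$ of large $X$-length and compare a geodesic from $1$ to $h$ with the path $g\cdot(\text{edge})\cdot g^{-1}$. After removing backtracking as in \Cref{rem: removing backtracks}, BCP forces the geodesic to contain an edge in the coset $gP_i$ whose endpoints lie within $\varepsilon$ of $g$ and $hg$. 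Relative quasiconvexity then moves $g$ to within $\nu$ of $H$, so $g\in H\cdot B_X(\nu+\varepsilon)$, and this ball is finite.

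The same argument controls where a peripheral edge of $\gamma$ with long $X$-length can sit. Such an edge lies in a coset $h'c\,P_i$ with $h'\in H$ and $|c|_X$ bounded, and $H\cap cP_ic^{-1}$ is then one of the finitely many representatives up to conjugacy. We expect this step to be the main obstacle. We must show that the segment of $H$ near such an edge can be rewritten using a generator from $H\cap P_i^{c}$ at the cost of a bounded error. That error is what makes $(H, Y\cup\calO)$ well defined and finitely generated: $H\cap P^{c}$ itself is finitely generated because it is relatively quasiconvex in $P^c$ (here one may appeal to \cite{Osin}).

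\textbf{Quasi-isometric embedding and relative hyperbolicity.} The rewriting above turns each $\widehat\Gamma$-geodesic between points of $H$ into a $(Y\cup\calO)$-path of length linear in the original. This gives the upper bound $d_{Y\cup\calO}\le C\,d_{X\cup\calP}+C$. The reverse inequality is immediate, since each letter of $Y\cup\calO$ has bounded $d_{X\cup\calP}$-length. Hence the inclusion is a quasi-isometric embedding. Hyperbolicity of $\Cay(H,Y\cup\calO)$ follows because it is quasi-isometric to a quasiconvex subset of the hyperbolic space $\widehat\Gamma$. The BCP property for $(H,\bbO)$ follows by pushing non-backtracking quasi-geodesics of $H$ into $\widehat\Gamma$. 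Their $\calO$-components become components in the corresponding $P_i$-cosets, and they stay non-backtracking up to a bounded modification. One then applies the BCP property for $(G,\bbP)$ and the bound $d_X\le$ (constant)$\cdot d_Y$ on $H$, which holds because $Y$ is finite.
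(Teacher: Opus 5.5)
The paper gives no proof of this statement; it is quoted as standard background (the induced peripheral structure theorem of Osin and Hruska). So your sketch has to stand on its own, and as written it has two genuine gaps.

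The first gap is finite generation. Your reason that $H\cap gPg^{-1}$ is finitely generated, namely that it is relatively quasiconvex, is not valid. Every subgroup $Q$ of a peripheral subgroup is $0$-relatively quasiconvex, since a $\widehat\Gamma$-geodesic between two points of $Q$ is a single edge. In fact, for arbitrary $(G,\bbP)$ a finite $Y$ need not exist: $G=\langle a,b\rangle*\langle t\rangle$ is hyperbolic relative to $\{\langle a,b\rangle\}$, and the commutator subgroup of $\langle a,b\rangle$ is relatively quasiconvex but infinitely generated. What the geometry gives is relative finite generation. The rewriting you flag as the main obstacle is a simple coset trick. For each of the finitely many triples $(u,w,P)$ with $|u|_X,|w|_X\le\nu$ and $H\cap uPw\neq\emptyset$, fix $x_{u,w,P}\in H\cap uPw$. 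Then $H\cap uPw=(H\cap uPu^{-1})\,x_{u,w,P}$, so $H$ is generated by a finite set together with the finitely many subgroups $H\cap uPu^{-1}$ with $|u|_X\le\nu$. Passing to a finite $Y$ needs an extra hypothesis such as slender peripheral subgroups. This holds in the toral setting where the paper applies the theorem, since subgroups of $\bbZ^n$ are finitely generated.

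The second gap is in the finiteness of $H$-conjugacy classes. Your comparison path (a geodesic for $g$, the $P$-edge $e$, a geodesic for $g^{-1}$) has length $2|g|_{X\cup\calP}+1$, which is not controlled by $|h|_{X\cup\calP}$. So it is not a uniform quasi-geodesic, and BCP yields no uniform $\varepsilon$. The conclusion cannot be right as stated, because your argument never chooses $g$. Replacing $g$ by $gp_0$ with $p_0\in P$ leaves $gPg^{-1}$ unchanged, yet when $H\cap gPg^{-1}$ has infinite index in $gPg^{-1}$ one has $\sup_{p_0\in P} d_X(gp_0,H)=\infty$ (pigeonhole on $H\cdot B_X(R)$).

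The missing idea is to normalize $g$ within the double coset $HgP$, so that $g$ is a vertex of $gP$ closest to $1$ and $d_{X\cup\calP}(1,g)=d_{X\cup\calP}(H,gP)$.
\begin{itemize}
\item For $h=gpg^{-1}\in H$, the vertices $g$ and $gp$ are then closest points of $gP$ to $1$ and to $h$. Hence the geodesics $[1,g]$ and $[gp,h]$ meet $gP$ only at their endpoints.
\item Apply a polygon version of BCP to the quadrilateral $[1,g]\,e\,[gp,h]\,[h,1]$ with $|p|_X$ large. The tool is Osin's bound on the $X$-length of isolated components of geodesic polygons in $\widehat\Gamma$, which is linear in the number of sides. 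It forces $[1,h]$ to contain an edge of $gP$.
\item That edge's endpoints are phase vertices, hence $\nu$-close to $H$, so $HgP\cap B_X(\nu)\neq\emptyset$. Only the double coset is controlled, not $g$ itself.
\end{itemize}
If you want to use only the two-path BCP of the paper, you must first prove, from this normalization and hyperbolicity, that the concatenation is a uniform quasi-geodesic.

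A smaller point concerns transferring BCP to $(H,\bbO)$. You need that $d_X$-close elements of $H$ are $d_Y$-close, which is true because $H\cap B_X(K)$ is finite; the inequality $d_X\le C\,d_Y$ you quote is the wrong direction. You also need that $\calO$-components which are not connected in $H$ land in distinct $P$-cosets, which uses that the representatives in $\bbO$ are pairwise non-conjugate in $H$.
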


\begin{definition}[Induced peripheral structure]
    We refer to a pair $(H,\bbO)$ as in \Cref{thm:induced structure} as a \emph{$(G,\bbP)$ induced peripheral structure} on $H$. 
\end{definition}

\begin{lemma}\label{peripheral coset distance in quasiconvex}
    Let $H$ be a $\nu$-relatively quasiconvex subgroup of $(G,\bbP)$. 
    There exists $\rho = \rho(H)$ such that the following holds:
    Let $\gamma$ be a geodesic in $\hat \Gamma$ representing an element in $H$.
    Assume $\gamma$ is labeled by $\tau p \tau'$ where $p\in P\in \bbP$.  
    Then, $d_X(Qp,Q)<\rho$ where $Q = P  \cap \tau\ii H\tau$.
\end{lemma}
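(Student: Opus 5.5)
The plan is a finiteness argument: use relative quasiconvexity to place the two endpoints of the peripheral edge near $H$. This confines $p$ to one of finitely many right cosets of subgroups of $P$, and $\rho$ can then be taken as a maximum over these finitely many cosets.

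Write $\gamma$ as starting at $1$ and ending at $h\in H$, with the $P$-edge labeled $p$ running from the vertex $\tau$ to the vertex $\tau p$. Both $\tau$ and $\tau p$ are phase vertices of the geodesic $\gamma$, whose endpoints lie in $H$. By $\nu$-relative quasiconvexity there are $h_1,h_2\in H$ and $a,b\in G$ with $|a|_X,|b|_X\le\nu$ such that
$$\tau=h_1a,\qquad \tau p=h_2b.$$
Two consequences follow.
\begin{itemize}
\item $\tau^{-1}H\tau=a^{-1}h_1^{-1}Hh_1a=a^{-1}Ha$. Hence $Q=P\cap\tau^{-1}H\tau=P\cap a^{-1}Ha=:Q_a$, which depends only on $a$.
\item $p=\tau^{-1}h_2b=a^{-1}h_1^{-1}h_2b\in a^{-1}Hb$. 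Hence $p$ lies in the set $S_{a,b}:=P\cap a^{-1}Hb$.
\end{itemize}

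Next, for each pair $(a,b)$ in the $\nu$-ball of $(G,d_X)$ with $S_{a,b}\neq\emptyset$, fix once and for all some $p_{a,b}\in S_{a,b}$. For any $p\in S_{a,b}$,
$$p\,p_{a,b}^{-1}\in P\cap a^{-1}Hb\,b^{-1}Ha=P\cap a^{-1}Ha=Q_a.$$
So $S_{a,b}=Q_a\,p_{a,b}$ is a single right coset of $Q_a$. In our situation this gives $Qp=Q_ap_{a,b}$. Every point $qp_{a,b}$ of this coset is within $d_X$-distance $|p_{a,b}|_X$ of $q\in Q$, so $d_X(Qp,Q)\le|p_{a,b}|_X$; this bound even controls the Hausdorff distance.

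Since $X$ is finite, the $\nu$-ball is finite, so there are only finitely many pairs $(a,b)$. Define
$$\rho:=1+\max\{\,|p_{a,b}|_X : |a|_X,|b|_X\le\nu,\ S_{a,b}\ne\emptyset\,\}.$$
This depends only on $H$ (through $\nu$ and the fixed choices $p_{a,b}$), not on $\gamma$, $\tau$, $\tau'$ or $p$, and it gives $d_X(Qp,Q)<\rho$.

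There is no real obstacle here. The only points needing care are the bookkeeping of which endpoint of the $P$-edge is $\tau$, and noticing that $Q$ depends only on $a$ rather than on $\tau$; this is what makes the finiteness argument uniform.
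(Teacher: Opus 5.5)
Your proof is correct, and it takes a more direct route than the paper.

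\textbf{The paper's route.} The paper argues by contradiction. For geodesics $\gamma_n$ with $d_X(Q_np_n,Q_n)\to\infty$, it picks $x_n,x_n'\in H$ that are $\nu$-close to $\tau_n$ and $\tau_np_n$. It then uses the BCP property to replace $\gamma_n$ by a geodesic from $x_n$ to $x_n'$ whose $P$-edge is connected to, and $\varepsilon$-close to, the original one. This reduces to the case $|\tau_n|_X,|\tau_n'|_X\le\nu+\varepsilon$. Pigeonholing then fixes $\tau,\tau'$, and the identity $p_np_m^{-1}=\tau^{-1}h_nh_m^{-1}\tau\in Q$ finishes.

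\textbf{Your route.} You notice the replacement is unnecessary. Writing $\tau=h_1a$ already gives $Q=P\cap a^{-1}Ha$. Writing $\tau p=h_2b$ puts $p$ in $P\cap a^{-1}Hb$, which is a single right coset of $Q_a$. The pair $(a,b)$ ranges over a finite set.

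\textbf{What each buys.} The underlying pigeonhole mechanism is the same. Your version, however, uses only the definition of relative quasiconvexity and the finiteness of $X$-balls. It needs neither BCP nor a contradiction argument, gives an explicit $\rho$, and works verbatim for non-abelian $P$. The paper's comparison $d_X(Q_np_n,Q_n)\le d_X(Q_np_n',Q_n)+2\varepsilon$ tacitly identifies the subgroup $Q$ of the replaced geodesic with $Q_n$. In general that subgroup is $u^{-1}Q_nu$ for a short $u\in P$. This is harmless, and automatic in the toral setting where the lemma is applied, but it is bookkeeping your argument avoids entirely. The paper's template buys nothing extra here.

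\textbf{Two small points to tidy up.}
\begin{enumerate}
\item Since $S_{a,b}$ depends on $P$, the maximum defining $\rho$ should also range over the finitely many $P\in\bbP$.
\item The claim that $\tau$ and $\tau p$ are phase vertices needs a one-line justification. Every component of a geodesic in $\widehat\Gamma$ is a single edge, since two consecutive $P$-edges could be merged. Hence a geodesic has no inner vertices.
\end{enumerate}
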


\begin{proof}
    Consider a sequence of geodesics $\gamma_n$ representing elements $h_n \in H$,  labeled by the concatenation $\tau_n p_n \tau'_n$ for $p_n\in P_n\in \bbP$. Assume for the sake of contradiction that $d_X(Q_np_n,Q_n) \to \infty$ where $Q_n = P_n\cap \tau_n\ii H \tau_n$. 
    
    Since there are only finitely many peripheral subgroups in $\bbP$, by passing to a subsequence, we may assume that there is $P\in \bbP$ such that  such that $P_n = P$ for all $n$.

    
    By definition of $\nu$-relative quasiconvexity, the element $\tau_n$ is $\nu$-close in $X$-distance to some $x_n\in H$. Similarly, the element $\tau_n p_n$ is $\nu$-close to some $x'_n\in H$.
    Consider the geodesic $\gamma'_n$ connecting $x_n$ to $x_n'$.
    Its label represents the element $x_n\ii x_n'\in H$.
    Since $|p_n|_X\to \infty$, by BCP, the geodesic $\gamma'_n$ has an edge $e'_n$ connected to the edge $e_n$ of $\gamma_n$ labeled by $p_n$, and $d_X(e_{n-},e'_{n-}),d_X(e_{n+},e'_{n+})<\varepsilon$. 
    Therefore, the label $p'_n$ of $e'_n$ satisfies 
    $$d_X(Q_np_n,Q_n) \le d_X(Q_np'_n,Q_n)+2\varepsilon.$$
    Since $x_n\in H$ we have  $Q_n = P  \cap \tau_n\ii H\tau_n = P\cap (x_n\ii \tau_n)\ii  H (x_n\ii\tau_n)$. 
    Moreover, using $\tau_n=e_{n-}$ we have 
    $$d_X(x_n,e'_{n-})\le d_X(x_n,e_{n-})+d_X(e_{n-},e'_{n-})\le \nu + \varepsilon,$$
    and similarly,
    $$d_X(x_n',e'_{n+})\le \nu + \varepsilon.$$

    Thus up to replacing $\gamma_n$ with $\gamma'_n$, we may assume that $|\tau_n|_X,|\tau'_n|_X\le \nu+\varepsilon$.
    
    By passing to a subsequence, we may assume that there are elements $\tau,\tau'$ such that $\tau = \tau_n,\tau'=\tau_n'$ for all $n$.
    Note that $Q_n=Q := P\cap \tau\ii H \tau$ for all $n$. 
    For all $m,n$, we have $Qp_n = Qp_m$ since $$P\ni p_n p_m\ii = \tau\ii h_n h_m\ii \tau \in \tau\ii H \tau.$$ 
    However, this contradicts the assumption $d_X(Qp_n,Q)\to \infty$.
\end{proof}

\section{A User's Guide to Weidmann-Weller Carrier Graphs}
\label{sec: users guide to weidmann weller}


Carrier graphs, as defined by Weidmann-Weller \cite{WeidWell}, are a way of recording the peripheral structure of subgroups of relatively hyperbolic groups. 
Informally, they are graphs of groups $\bbA$ together with distinguished subgraphs, called \emph{peripheral stars}, and a map $\nu_\calA: \pi_1(\bbA,v_0)\to G$ 
which maps the peripheral stars into the peripheral cosets of $G$.

\begin{definition}[Carrier graph~\cite{WeidWell}*{Definition 2.6}, $P_i$-carrier star~\cite{WeidWell}*{Definition 2.5}]
    A $(G,\bbP)$-carrier graph of groups is a tuple \[\calA=(\bbA, (g_e)_{e\in E\bfA}, (\bbC_j,c_j)_{1\leq j\leq k}, v_0)\] where $\bbA$ is an oriented graph of groups, $g_e\in G$, each $\bbC_j$ is the full subgraph of groups whose underlying graph $\bfC_j$ is a star consisting of all edges incident to $c_j\in V\bfA$, and $v_0 \in V\bfA$ is the basepoint such that:
    \begin{enumerate}[label = (A\arabic*)]
        \item $A_v, A_e\le G$ for each $v\in V\bfA$ and $e\in E\bfA$. 
        \item $g_{e^{-1}}=(g_e)^{-1}$ for each $e\in E\bfA$. 
        \item For each oriented edge $e$, $A_e\to A_{e_-}$ is an inclusion of subgroups of $G$, and $A_e\to A_{e^+}$ is given by $a\mapsto g_e^{-1}ag_e$.
        \item For each $j=1,\dots, k$, there exists $P\in \bbP$ so that $C_v, C_e\le P$ for each $v\in V\bfC_j$ and $e\in E\bfC_j$, and $g_e\in P$ for each $e\in E\bfC_j$.  
        \item For each $j=1,\dots, k$ and oriented edge $e\in E\bfC_j$, we have that $e_-=c_j$, and the morphisms $A_e\to A_{e_-}$ and $A_e\to A_{e_+}$ are isomorphisms.
        \item\label{carrier: non-trivial turns} $g_eg_{e'}^{-1}\not\in A_{c_j}$ for each distinct pair of oriented edges $e,e'\in \bfC_j$. 
        \item $\bfC_i\cap \bfC_j=\emptyset$ for $i\neq j$. 
        \item \label{carrier: non free edges} For each oriented edge $e\in E\bfA$ with $A_e\neq 1$, we have $e_-\in V\bfC_j$ for some $j=1,\dots, k$ and $e_+\not\in V\bfC_i$ for $i\neq j$.
        \item 
        \label{WW: carrier is minimal}
        \label{carrier: minimality} 
        $\bbA$ is minimal relative to the base point and the non-trivial central peripheral vertices, i.e. for every $v \in V\bfA$ with $\text{val}(v) = 1$ and surjective boundary morphism $A_e \rightarrow A_v$, we have $v = v_0$ or $v = c_j$ for $1 \leq j \leq k$ with $C_{c_j} \neq 1$.
    \end{enumerate}
    
    For any $\mathbb{A}$-path $t = (a_0, e_1, a_1, \dots, e_k, a_k)$, define:
    \[\nu_{\mathcal{A}}(t) := a_0 g_{e_1} a_1 \cdot \ldots \cdot g_{e_k} a_k.\]
    This induces a well-defined group homomorphism:
    \[\nu_{\mathcal{A}}: \pi_1(\mathbb{A}, v_0) \to G, \quad [t] \mapsto \nu_{\mathcal{A}}([t]) := \nu_{\mathcal{A}}(t).\]
    Each $(\bbC_j,c_j)$ is a \emph{peripheral star}, and vertices or edges contained in some $V\bfC_j$ or $E\bfC_j$ are \emph{peripheral}. A non-peripheral edge $e\in E\bfA$ (respectively, vertex $v\in V\bfA$) is \emph{essential} if $A_e\neq 1$ (respectively, $A_v\neq 1$). Otherwise, a non-peripheral edge (respectively, vertex) is \emph{free}.
\end{definition}

\begin{example}
     The group $G=\gen{a,b,c,d|c=[a,b],[c,d]=1}$ which splits as $F_2*_\bbZ \bbZ^2=\gen{a,b}*_{[a,b]=c}\gen{c,d}$ is hyperbolic relative to $P=\gen{c,d}$.
     The subgroup $$H=\gen{a^2,b^2,ab, \quad a^{d^5}, b^{d^5},\quad (a^2b^3)^{d^{100}}}$$ is the image of $\nu_\calA$ for the carrier graph depicted in \Cref{fig:carrier_graph}.
     \begin{figure}[h]
         \centering
         \includegraphics{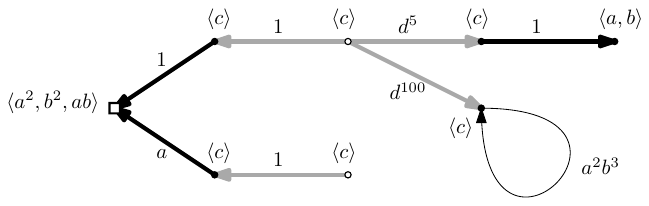}
         \caption{Peripheral stars have central vertices marked by $\circ$ and gray edges.
         The base vertex $v_0$ is marked by $\square$.
         The edge labels and vertex groups are marked on the graph.
         Edge groups of thick edges are all $\gen{c}$, and the thin edge is free. }
         \label{fig:carrier_graph}
     \end{figure}
\end{example}

\begin{remark}\label{rem: peripheral turns are realizable}
    As a consequence of \ref{carrier: minimality}, for any two distinct peripheral edges $e$, $e'$ with $e_-=e'_-$, there exists a reduced $\bbA$-path $t\in [t]\in \pi_1(\bbA, v_0)$ whose underlying path has the form $(\dots, e \ii, e', \dots)$. Moreover, if $v_0 = e_+$ then the path can be taken to be of the form $(e\ii,e',\dots)$.
\end{remark}

\begin{definition}[Peripheral structure]
Let $\calA$ be a $(G,\bbP)$-carrier graph with basepoint $v_0\in V\bfA$. For each peripheral star $(\bbC_j,c_j)$ with $A_{c_j}\neq 1$, choose an $\bbA$-path $g_j$ whose underlying path starts at $v_0$ and ends at $c_j$. A \emph{peripheral structure} $\bbO_\calA$ is the collection of conjugates $\nu_\calA(g_j) A_{c_j}\nu_\calA(g_j)\ii$ for some such choice of $\bbA$-paths.
\end{definition}

It is shown in \cite{WeidWell} (see \Cref{weidmann weller rewrite} below) that a locally quasiconvex subgroup $H$ of $G$ has a particularly nice carrier graph $\calA$ so that 
\begin{itemize}
    \item the induced peripheral structure of $H$ is encoded by the peripheral structure of $\calA$,
    \item there exists a metric on $\pi(\bbA,v_0)$ (see \Cref{definition:metric on carriers}) so that $\nu_\calA:\pi_1(\bbA,v_0)\to G$ is a $C$-quasi-isometric embedding, and
    \item all essential vertex groups are $\nu$-relatively quasiconvex
\end{itemize}
where both constants $C,\nu$ depend only on $G$ and $\rank(H)$.
Moreover, for locally quasiconvex subgroups of a bounded rank, there are only finitely many possibilities for the essential portions of their carrier graphs.
Consequently, for the right notion of equivalence (see \Cref{definition:equivalence of carrier graphs}), the carrier graphs of locally quasiconvex subgroups of a bounded rank fall into finitely many equivalence classes.

\begin{definition}[Realizations]\label{realizations}
    Let $\calA$ be a $(G,\bbP)$-carrier graph. A \emph{realization} $\bar s$ of the $\bbA$-path $t=(a_0,e_1,a_1,\dots,e_k,a_k)$ is any path constructed as follows: First, consider a path $s= \alpha_0 \epsilon_1\alpha_1\dots\epsilon _k\alpha_k$ in $\widehat \Gamma$ such that for all $i$, $\alpha_i$ is a geodesic representing $a_i$, and $\epsilon_i$ is a geodesic representing $g_{e_i}$.
    Then form $\bar s$ by replacing each subpath of $s$ arising from a maximal $\bbC_i$-subpath of $t$ by a single edge with label in $\calP$ connecting its endpoints.
\end{definition}

\begin{definition}[$|t|_{X\cup \calP}^\bbA$ and $d_{X\cup\calP}^\bbA$]
    \label{definition:metric on carriers}
    For any $\bbA$-path $t$, let $|t|_{X\cup \calP}^\bbA$ be the minimal length of a realization of some $t'\in [t]$. This defines a metric on $\pi_1(\bbA, v_0)$ by $d_{X\cup \calP}^\bbA([g],[h])=|gh^{-1}|_{X\cup\calP}^\bbA$.
\end{definition}

\begin{definition}[Equivalence of carrier graphs]
    \label{definition:equivalence of carrier graphs}
    Let $\calA,\calB$ be $(G,\bbP)$-carrier graphs. Let $\bfA',\bfB'$ be the (possibly disconnected) subgraphs of $\bfA,\bfB$ consisting of all non-trivial vertices and edges in $\calA,\calB$ respectively.
    The carrier graphs $\calA,\calB$ are \emph{equivalent} if $b_1(\bfA) = b_1(\bfB)$, and there exists a graph isomorphism $f:\bfA'\to \bfB'$ such that:
    \begin{enumerate}
            \item The map $f$ sends the (non-trivial) peripheral stars of $\bfA'$ isomorphically to the peripheral stars of $\bfB'$.
            \item For every essential vertex $v\in V\bfA$, $A_v = B_{f(v)}$.
            \item For every essential edge $e\in E\bfA'$, $A_e = B_{f(v)}$ and $g_e = g_{f(e)}$.
    \end{enumerate}
\end{definition}

\begin{theorem}[Weidmann-Weller \cite{WeidWell}]
\label{weidmann weller rewrite}
Let $G$ be torsion-free and hyperbolic relative to a collection of finitely generated abelian subgroups $\mathbb{P} = \{P_1,\dots,P_n\}$. Let $X$ be a finite, symmetric generating set of $G$. 

For $r\in\bbN$ there exist constants $C$, $\nu$ and equivalence classes $\frC_1,\dots,\frC_N$ of $(G,\bbP)$-carrier graphs such that the following holds: 

If $H\le G$ is a locally relative quasi-convex subgroup of $\rank(H)\le r$ then there exists a $(G,\bbP)$-carrier graph $\calA = (\bbA,(g_e)_{e\in E\bfA},(\bbC_i,c_i)_{1\leq i\leq k}, v_0)$ such that:
\begin{enumerate}[label = (C\arabic*)]
    \item \label{WW: image is H} $\im(\nu_\calA)=H$.
    \item \label{WW: finitely many equivalences} The carrier graph $\calA$ belongs to one of the equivalence classes $\frC_1,\dots,\frC_N$. 
    
    \item \label{WW: paths and qi} For any $\bbA$-path $t'$, there exists a realization $\bar s$ of some reduced $t\in [t']$, 
    such that $\bar s$ is a $\nu$-taut $C$-quasi-geodesic.
    In particular, the map $\nu_\mathcal{A} :(\pi_1(\mathbb{A},v_0), d^\mathbb{A}_{X \cup \mathcal{P}}) \to (G, d_{X \cup \mathbb{P}})$ is an injective $C$-quasi-isometric embedding.
    \item \label{WW: peripheral} For any peripheral structure $\mathbb{O}_\mathcal{A}$ of $\mathcal{A}$, the image $\nu_\mathcal{A}(\mathbb{O}_\mathcal{A})$ is a $(G,\bbP)$ induced peripheral structure on $H$. 
    \item \label{WW: prenormal} For every essential vertex $v$ the subgroup $A_v$ is:
    \begin{enumerate}[label=(\roman*)]
        \item $\nu$-relatively quasi-convex
        \item The collection $\bbO_v = \{(A_e)_- \;|\;e\in E\bfA, e_-=v,A_e \ne 1  \}$ is a $(G,\bbP)$ induced peripheral structure on $A_v$.
    \end{enumerate}
    \item \label{WW: prenormal edge length} $|g_e|_X\leq \nu$ for each essential $e\in E\bfA$.
    \item \label{WW: no P component in free edges} For every free edge $e\in E\bfA$ \begin{enumerate}[label = (\roman*)]
    \item \label{WW: reduced free edges} If an endpoint of $e$ is  free, then $g_e\ne 1$.
    \item \label{WW: converse of Condition (A8)} The geodesic paths representing $g_e$ have no $P_i$-components. 
    \end{enumerate}
    
\end{enumerate}
\end{theorem}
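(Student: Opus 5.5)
This statement is a repackaging of the main results of Weidmann--Weller \cite{WeidWell}, so the plan is not to build carrier graphs from scratch. Instead I will take their folding output and check that it has each of the properties (C1)--(C7) listed here.

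\textbf{Step 1: produce the carrier graph.} Start with $H$ locally relatively quasiconvex of rank at most $r$. Choose a generating tuple of length $r$ and form the wedge-of-circles carrier graph, with all vertex groups trivial and edge labels $g_e$ given by geodesic words. Then run the Weidmann--Weller folding and peripheral-star operations until they produce a carrier graph $\calA$ that is ``tame'', meaning it satisfies their Lemma 1.18-type local conditions.

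\textbf{Step 2: read off (C1), (C5), (C6), (C7).} Folding moves do not change $\im(\nu_\calA)$, which gives (C1). The essential vertex groups are subgroups of $H$ generated by boundedly many elements of bounded length. Local relative quasiconvexity of $H$, together with the uniform rank bound, then makes these vertex groups $\nu$-relatively quasiconvex for a uniform $\nu$. Their incident essential edge groups are exactly the infinite intersections with parabolics, which gives (C5)(ii) via \Cref{thm:induced structure}. Tameness gives (C6) directly. For (C7), first perform any remaining folds that would shorten a free edge or absorb a $P_i$-component of $g_e$ into a peripheral star; this normalization forces $g_e\ne 1$ at free endpoints and removes peripheral components.

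\textbf{Step 3: the finiteness statement (C2).} The essential part of $\calA$ has boundedly many edges and vertices, since the complexity is controlled by $r$. Each essential vertex group is generated by elements of $X$-length bounded in terms of $\nu$, and each essential $g_e$ satisfies $|g_e|_X\le\nu$. Hence there are only finitely many choices of essential data, and these give the equivalence classes $\frC_1,\dots,\frC_N$ of \Cref{definition:equivalence of carrier graphs}.

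\textbf{Step 4: quasi-geodesic realizations (C3) and (C4).} This is the step I expect to be the main obstacle. The goal is to show that for a reduced $\bbA$-path, concatenating geodesics for vertex elements and edge labels, and then collapsing the peripheral stars, yields a local quasi-geodesic in $\widehat\Gamma$. The local-to-global principle in the hyperbolic graph $\widehat\Gamma$ would then upgrade this to a $C$-quasi-geodesic.

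The tool for the local check is the no-cancellation condition \ref{carrier: non-trivial turns} at peripheral turns, combined with (C7)(ii) and the BCP property. Tautness should follow because two connected long $P$-edges in a realization would have to come from the same star; otherwise we could fold further, contradicting the termination of Step 1.

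Once realizations are quasi-geodesic, injectivity of $\nu_\calA$ follows because a nontrivial reduced path has a nontrivial realization. The quasi-isometric embedding also follows from this.

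Finally, (C4) comes from \Cref{thm:induced structure}. Every infinite intersection $H\cap P^g$ must be seen by a peripheral star, since a quasi-geodesic realization of a long parabolic element must pass through a long $P$-edge. This conjugates the star's vertex group $A_{c_j}$ onto $H\cap P^g$.
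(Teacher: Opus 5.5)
Your overall plan, treating the statement as an import from \cite{WeidWell} and reading off (C1)--(C7), is also what the paper does. The paper, however, does it purely by citation. Theorem 4.10 of \cite{WeidWell} gives (C1), (C2) and (C4). (C5) and (C6) are the definition of an $M$-prenormal carrier graph; Proposition 4.9 produces one, and Lemma 4.7 shows it survives the final fold. (C3) and (C7)(ii) come from Proposition 3.1 and Corollary 3.6. Where you try to re-derive these instead, the arguments have genuine gaps.

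\emph{Steps 2--3.} These rest on the assertion that essential vertex groups are generated by boundedly many elements of bounded $X$-length. Nothing in Step 1 yields this: the generating tuple of $H$ can be arbitrarily long, and you never explain why the folded output has short essential generators. This is precisely the main content of Weidmann--Weller, the relative analogue of Kapovich--Weidmann. Your constants are also circular. You obtain $\nu$ as a maximum of quasiconvexity constants over the finitely many subgroups generated by short elements. Step 3 then bounds the generator lengths ``in terms of $\nu$''. Untangling this mutual dependence is the real work. So (C2) and (C5)(i) are unsupported, as is ``the complexity is controlled by $r$''.

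\emph{Step 4} aims at something false. Realizations of an \emph{arbitrary} reduced $\mathbb{A}$-path need not be $\nu$-taut. The move $(a_{i-1},e_i,a_i)\mapsto(a_{i-1}c,e_i,g_{e_i}^{-1}c^{-1}g_{e_i}a_i)$ with $c\in A_{e_i}$ preserves reducedness. Take $e_i$ to be an essential edge from a peripheral star to an essential vertex, and $c$ a long element of the infinite parabolic group $A_{e_i}$. Then the $P$-edge coming from the star (now carrying $a_{i-1}c$) and the geodesic for $g_{e_i}^{-1}c^{-1}g_{e_i}a_i$ contain two connected $P$-edges of large $X$-length. No fold is available in this situation, so your ``otherwise we could fold further'' does not apply. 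This is exactly why (C3) asserts the property only for \emph{some} reduced $t\in[t']$. Producing that normal form is the content of Proposition 3.1 of \cite{WeidWell}: every subpath satisfies Lemma 1.18(3), whose converse is tautness.

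There are further gaps. Injectivity of $\nu_{\mathcal{A}}$ does not follow merely from realizations being $C$-quasi-geodesic, since quasi-geodesics of length up to about $C^2$ may be closed. Your sketch of (C4) only shows that a long element of $H\cap P^g$ meets some star. It does not show that the star groups give exactly one representative per $H$-conjugacy class of infinite intersections. Finally, for (C7)(i) the right operation is not a shortening fold but the paper's contraction. A free edge with $g_e=1$ cannot be a loop, since that would contradict injectivity. So it joins distinct vertices, one with trivial group, and can be collapsed without disturbing the other conditions.
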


\begin{proof}
    This theorem consists mainly of Theorem 4.10 in \cite{WeidWell} together with some notions defined explicitly there. 
    Theorem 4.10 of \cite{WeidWell} shows that there exists a $M$-(pre)normal $(G,\bbP)$-carrier graph that fulfills \ref{WW: image is H}, \ref{WW: finitely many equivalences} and \ref{WW: peripheral}.  
    Items \ref{WW: prenormal} and \ref{WW: prenormal edge length} are the notion of $M$-prenormal $(G,\bbP)$-carrier graphs used in \cite{WeidWell}: the proof of Theorem 4.10 relies on Proposition 4.9, which produces an $M$-(pre)normal $(G,\bbP)$-carrier graph in an equivalence classes of $M$-(pre)normal $(G,\bbP)$-carrier graphs. The result of this application then (possibly) has a fold applied to it, Lemma 4.7 of \cite{WeidWell} guarantees the folded graph is again (pre)normal. 
    Except for \ref{WW: no P component in free edges}\ref{WW: reduced free edges}, the remaining properties are consequences of Proposition 3.1 and Corollary 3.6.
    Proposition 3.1 of \cite{WeidWell} implies \ref{WW: paths and qi}; indeed, in the course of the proof of Proposition 3.1, it is shown every subpath satisfies Lemma 1.18(3), the converse of Lemma 1.18(3) is the definition of $\nu$-taut.
    Item \ref{WW: no P component in free edges}\ref{WW: converse of Condition (A8)} is the converse of the condition \ref{carrier: non free edges} from Corollary 3.6 in \cite{WeidWell}.
    As Proposition 3.1 and Corollary 3.6 are used in the proof \ref{WW: paths and qi} and \ref{WW: no P component in free edges}\ref{WW: converse of Condition (A8)} also hold.

    For \ref{WW: no P component in free edges}\ref{WW: reduced free edges} assume that there exists a free edge $e$ with $g_e = 1$. If $e$ is a loop, then the map $\nu_\mathcal{A}$ is not injective, which contradicts \ref{WW: paths and qi}.

    Therefore, $e$ is an edge with endpoints $v,v'$ with $v \neq v'$. Now, we can delete $e$ and identify the vertices $v$ and $v'$. We get a new vertex $v''$ with the vertex group $A_{v''} = A_{v} \cup A_{v'}$, which is a group as $A_{v} = 1$ or $A_{v'} = 1$. This reduction step does not interfere with any of the other conditions. As there are only finitely many edges, after a finite number of reduction steps we get a $(G,\bbP)$-carrier graph fulfilling \ref{WW: no P component in free edges}\ref{WW: reduced free edges}.
\end{proof}


\section{Ascending chains in toral relatively hyperbolic groups}
\label{sec: omegaACC for toral rel hyp}
In this section we prove \Cref{main theorem rel Hyperbolic} and deduce $\omega$ACC for complete finite-volume hyperbolic 3-manifold groups.

\begin{reptheoremA}{main theorem rel Hyperbolic}
    In a toral relatively hyperbolic group, any ascending chain of bounded rank, locally relatively quasiconvex subgroups stabilizes. 
\end{reptheoremA}

The following allows us to reduce to ascending chains such that no subgroup is a proper free factor of a subgroup later in the sequence.

\begin{lemma}\label{free factor reduction}
Let $H_1\le H_2\le\dots$ be an ascending chain of subgroups of $G$ with uniformly bounded rank.
There exists an ascending chain $K_1\le K_2\le \dots$ of subgroups of $G$ with uniformly bounded rank such that $K_{i}$ is not contained in a proper free factor of $K_{j}$ for any $i\leq j$, and $K_1\le K_2\le \dots$ stabilizes if and only if $H_1\le H_2\le \dots$ stabilizes. Moreover, each $K_i$ is a subgroup of some $H_j$.
\end{lemma}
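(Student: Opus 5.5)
The plan is to replace each $H_i$ by a \emph{smallest} free factor containing the relevant earlier groups, and to use the rank bound together with Grushko's theorem to make the construction stabilize. The tool for comparing free factors across the chain is the Kurosh subgroup theorem. If $F$ is a free factor of $H_{j'}$ and $H_j\le H_{j'}$, then $H_j$ acts on the Bass--Serre tree of the splitting $H_{j'}=F*F'$. In the induced graph of groups the edge groups are trivial, so $H_j\cap F$ is a vertex group of a free splitting of $H_j$, hence a free factor of $H_j$. Since free factors of finitely generated groups are finitely generated, Grushko gives $\rank(H_j\cap F)\le \rank(H_j)\le r$.

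I would prove the lemma by induction on the rank bound $r$, using the complexity
\[
c(i,j)=\min\{\rank(F) \;|\; F \text{ a free factor of } H_j,\ H_i\le F\}\le r,
\]
defined for $i\le j$.

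\textbf{Case 1.} There is $i_0$ such that $c(i,j)=\rank(H_j)$ and no proper free factor of $H_j$ contains $H_i$, for all $i_0\le i\le j$. Then I take $K_i=H_{i_0+i}$. This is a tail of the original chain, so it stabilizes exactly when the original chain does, and it has the required property by hypothesis.

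\textbf{Case 2.} Otherwise there are infinitely many pairs $i<j$ with $H_i$ contained in a proper free factor $F_{i,j}$ of $H_j$. Grushko gives $\rank(F_{i,j})<\rank(H_j)$ unless the complementary factor has rank $0$, which would make $F_{i,j}$ not proper since the groups are torsion-free. So in this case we have lower rank. I then form the new chain by a diagonal argument. Fix such an $i$. For $j'\ge j$, choose $F_{i,j'}$ to be a minimal-rank free factor of $H_{j'}$ containing $H_i$. By the Kurosh observation, $F_{i,j'}\cap H_j$ is a free factor of $H_j$ containing $H_i$. Intersecting back down therefore shows that the minimal factors can be chosen compatibly, so that $F_{i,j}\le F_{i,j'}$. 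After passing to a subsequence where $c(i,\cdot)$ is eventually constant, the groups $F_{i,j}$ form an ascending chain of rank $<r$, to which the induction hypothesis applies. Taking successive $i$'s and a diagonal subsequence produces an ascending chain $K_1\le K_2\le\cdots$ in which each $K_m$ is some $F_{i,j}\le H_j$. Each $K_m$ contains some $H_{i}$ and is contained in some $H_{j}$, and the indices are cofinal. Hence the $K$-chain stabilizes if and only if the $H$-chain does: stabilization of either chain squeezes the other.

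The main obstacle is the compatibility step, namely that the chosen minimal free factors can be made nested, $K_m\le K_{m+1}$. Minimal free factors need not be unique, so an arbitrary choice may fail to nest. I would first try to show that the intersection of two free factors of $H_j$ is again a free factor, using Kurosh as above. If that works, there is a canonical smallest free factor of $H_j$ containing $H_i$, and its intersection with $H_j$ is monotone in $j$, which gives nesting directly. If it fails, I would fall back on choosing the factors by the stabilized value of $c$ and intersecting, so that at each step rank strictly drops or the factor is reused.
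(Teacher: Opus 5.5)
\textbf{Gap in Case~2.} Your overall structure (induction on the rank bound, take a tail in the good case, otherwise pass to lower-rank free factors) is right. Case~2 breaks down because you fix $i$ and let $j$ grow. The property ``$H_i$ lies in a proper free factor of $H_j$'' is not inherited by $H_{j'}$ for $j'>j$. For example, $H_j$ could be free of rank $2$ (say $\gen{a,b}$, with $H_i=\gen{a}$) sitting inside a freely indecomposable $H_{j'}$, such as a closed genus-$2$ surface group. Then the minimal free factor of $H_{j'}$ containing $H_i$ is $H_{j'}$ itself, and $c(i,\cdot)$ can be eventually equal to $\rank H_{j'}=r$. So your assertion that, after passing to a subsequence where $c(i,\cdot)$ is constant, the $F_{i,j}$ form a chain of rank $<r$ is unjustified. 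The ``diagonal subsequence over successive $i$'s'' is not specified precisely enough to repair this. The nesting step is also left conditional (``if that works \ldots\ if it fails \ldots''), so as written the proposal does not yet give a proof.

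\textbf{Missing idea.} No compatibility of minimal free factors is needed. The negation of Case~1 lets you choose indices $i_1<j_1<i_2<j_2<\cdots$ with $H_{i_k}$ contained in a proper free factor $L_k$ of $H_{j_k}$. Then
\[
L_k\le H_{j_k}\le H_{i_{k+1}}\le L_{k+1},
\]
so the $L_k$ are automatically nested. Each has rank $<\rank H_{j_k}\le r$ by Grushko, and they are squeezed between terms of the original chain, so stabilization transfers in both directions. This is exactly the paper's proof, and it needs no Kurosh argument.

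Your first proposed fix is actually correct. By Kurosh, the intersection of two free factors is a free factor, so there is a unique smallest free factor of $H_j$ containing $H_i$, and it is monotone in both $i$ and $j$. Combined with pairs whose indices both increase and for which that factor is proper, it would also work. But once the indices are interleaved, monotonicity comes for free.

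\textbf{Minor point.} A proper free factor has strictly smaller rank because the complementary factor is nontrivial, hence of rank at least $1$. Torsion-freeness plays no role, and the lemma does not assume it.
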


\begin{proof}
By induction on $n=\sup_i\rank H_i$. If $n=1$, then the $H_i$ are all cyclic and contain no proper free factors, so we suppose $n>1$. 
If for all sufficiently large integers $i<j$, $H_i$ is not contained in a proper free factor of $H_j$, then we can take $K_1\le K_2\le\dots$ to be a tail of $H_1\le H_2\le \dots$.

Otherwise, there exists an ascending sequence $i_1<j_1<i_2<j_2<\dots$ such that $H_{i_k}$ is contained in a proper free factor $L_k$ of $H_{j_k}$ for all $k=1,2,\dots$. We have 
\[H_{i_{1}} \le L_1 \le H_{j_{1}} \le H_{i_2} \le L_2\le H_{j_{2}} \le \dots,\]
so $L_1\le L_2\le\dots$ stabilizes if and only if $H_1\le H_2\le \dots$ stabilizes. 
Each $L_k$ is a proper free factor of $H_{j_k}$, so $\rank L_k<\rank H_{j_k}\leq n$.
By the induction hypothesis applied to $L_1\le L_2\le \dots $, there exists a sequence $K_1\le K_2\le\dots $ such that $K_i$ is not contained in a free factor of $K_j$ for all $i<j$, and $K_1\le K_2\le \dots $ stabilizes if and only if $L_1\le L_2\le \dots $ stabilizes. Thus, $K_1\le K_2\le \dots$ stabilizes if and only if $H_1\le H_2\le \dots $ stabilizes.
Moreover, each $K_i$ is a subgroup of some $L_j$ which is itself a subgroup of some $H_k$.
\end{proof}

\begin{proof}[Proof of \Cref{main theorem rel Hyperbolic}]
Let $(G,\mathbb{P})$ be a toral relatively hyperbolic group, and let $H_1\le H_2\le\dots $ be an ascending chain of locally quasiconvex subgroups of uniformly bounded rank.

By \Cref{free factor reduction} we can assume that $H_{i}$ is not contained in a proper free factor of any of the $H_{j}$ for all $i<j$. 
By \Cref{weidmann weller rewrite}, there exist constants $C,\nu$ and finitely many equivalence classes such that for each subgroup $H_i$ there is $(G,\bbP)$-carrier graph $$\calA_i=(\bbA_i, (g_e)_{e\in E\bfA_i},(\bbC_{j_i},c_{j_i})_{1\leq j_i\leq k_i},v_0)$$ in one of those classes satisfying properties \ref{WW: image is H}-\ref{WW: no P component in free edges}. By passing to a subsequence, we can suppose the  $\calA_i$ all belong to a single equivalence class $\frC$.

The proof proceeds in steps. At each step, we prove there are finitely many options for various aspects of the carrier graphs' data. 
By passing to subsequences, we may assume that these data are fixed. 
Eventually, all the data is fixed, demonstrating that the sequence stabilizes.

\bigskip

Let $h_1,\dots,h_n$ be the generators of $H_1$. 
Let $\gamma_1,\dots,\gamma_n$ be geodesics in $\widehat \Gamma$ representing $h_1,\dots,h_n$, and let
$$M =  \max_i \{\len_X(\gamma_i)\}.$$
Note that $|h_i|_{X\cup \calP}\le M $ for $i=1,\dots,n$.

For $i\ge 1$, since $H_1\le H_i$, the elements $h_1,\dots,h_n$ can be represented by $\bbA_i$-paths $t_{i,1},\dots,t_{i,n}$ whose realizations $\bar s_{i,1},\dots,\bar s_{i,n}$ are $\nu$-taut $C$-quasigeodesics by \ref{WW: paths and qi}.
Denote $T_i = \{t_{i,1},\dots,t_{i,n}\}$ and $S_i = \{\bar s_{i,1},\dots,\bar s_{i,n}\}$.

\bigskip

\begin{step} 
The number of edges in $\bfA_i$ and the $X$-lengths of all free edges are uniformly bounded, i.e. $$\sup_i| E\bfA_i|<\infty \quad \text {and }\quad \sup\{|g_e|_X\;:\;i\geq 0,\;e\in E\bfA_i\text{ is free}\}<\infty.$$
\end{step}

\begin{claimproof}[Proof of Step 1]
Let $i\ge 1$. 
The carrier graph $\calA_i$ belongs to the equivalence class $\frC$ and satisfies \ref{WW: carrier is minimal}. 
Thus the underlying graph $\bfA_i$ belongs to one of finitely many homeomorphism classes. Moreover, the number of non-trivial vertices and edges in $\bfA_i$ is determined by $\frC$. 
Therefore, to bound the number of edges in $\bfA_i$ it remains to bound $\len(p)$ for embedded paths $p=(v_0,e_1,\dots, e_l, v_l)$ in $\bfA_i$ with $A_{v_j}$ trivial and $\deg(v_j)=2$ for $j=1,\dots, l-1$.

Since $H_1\le H_i$ is not contained in a proper free factor and the path $p$ gives rise to a non-trivial free splitting of $H_i$, there exists $h\in \{h_1,\dots,h_n\}$ such that any $\bbA_i$-path $t$ representing $h$ contains $p$ as a subpath.
Let $\bar s \in S_i$ be the $\nu$-taut $C$-quasigeodesic realization of the $\bbA_i$-path $t\in T_i$ representing $h$. The realization $\bar s$ is a $C$-quasigeodesic, so $\len(\bar s) \le CM+C^2$. The realization $\bar s$ contains the realization $\bar q$ of $p$ as a subpath. 
The subpath $\bar q$ is obtained in two steps: 
first, construct the path $q$ by concatenating geodesics representing each $g_{e_j}$; then, replace each $\bbC_i$ subpath in $q$ by a single edge to obtain $\bar q$. 
Each peripheral edge $e$ of $p$ belongs to a $\bbC_i$ subpath of length two, unless $e$ is the first or last edge of $p$. 
Additionally, by \ref{carrier: non-trivial turns} and 
\ref{WW: no P component in free edges}\ref{WW: reduced free edges} for any pair of consecutive edges $e$, $e'$ in $p$ with $g_e=g_{e'}=1$, exactly one of $e$, $e'$ is peripheral. 
Thus the piecewise $X$-length of any four consecutive edges of $p$ is at least one. 
Together, these observations imply $$\len(p) \le 4\len (\bar q)+3 \le 4\len(\bar s)+3\le 4(CM+C^2)+3.$$

By \ref{WW: no P component in free edges}, for every free edge $e$ in $p$, the geodesic representing $g_e$ has no $P_i$-component. So, by the same reasoning \[|g_e|_X =|g_e|_{X\cup \calP} \le \len(\bar q) \le CM+C^2.\qedhere \]
\end{claimproof}

By Step 1 and the definition of carrier equivalence, up
to passing to a subsequence, we may assume that the the underlying graph of $\calA_i$ stabilizes, and so does the non-peripheral structure of $\calA_i$. 
That is,
there exists a $(G,\bbP)$-carrier graph $(\bbA,(g_e)_{e\in E\bfA}, (\bbC_j,c_j)_{1\le j\le k},v_0)$ 
and graph isomorphisms $f_i:\bfA\to \bfA_i$ such that for each $i=1,2,\dots$ 
\begin{enumerate}[label = (P\arabic*)]
    \item \label{basepoint} $f_i$ carries the basepoint of $\calA$ to the basepoint of $\calA_i$.
    \item $f_i$ carries peripheral stars bijectively to peripheral stars.
    \item $A_v = A_{f_i(v)}$ for non-peripheral $v\in V\bfA$.
    \item $A_e = A_{f_i(e)}$ and $g_e = g_{f_i(e)}$ for each non-peripheral edge $e\in E\bfA$ and all $i=1,2,\dots$.
\end{enumerate}

\begin{step}
There is a uniform bound on $|g_e\ii g_{e'}|_X$ for any pair of peripheral edges $e$, $e'$ such that $e_-=e'_-$ is the center of a trivial peripheral star in some $\calA_i$.
\end{step} 

\begin{claimproof}[Proof of Step 2]
Let $i\ge 1$. Let $(\bbC, c)$ be a trivial peripheral star in $\calA_i$. Consider the graph $\bfL_i$ with vertices $\{ e\in E\bfC\;|\;e_-=c\}$, where $e$, $e'$ are joined by an edge if some $\bbA_i$-path $t\in T_i$ representing a generator $h\in \{h_1,\dots, h_n\}$ contains the subpath $(e^{-1},1, e')$ or the reverse subpath $(e'^{-1},1,e)$. 

The graph $\bfL_i$ is connected: Indeed, 
any non-trivial partition $V\bfL_i= A\sqcup B$, induces a free splitting of $H_i$. 
One of the free factors in this splitting consists of all closed $\bbA_i$-paths based at $v_0$ without a subpath $(e^{-1}, 1,e')$ where 
$e\in A$ and $e'\in B$, or $e\in B$ and $e' \in A$.
Since $H_1\le H_i$ is not contained in a proper free factor, some $\bbA_i$-path $t\in T_i$ is not in the free factor described above. 
In other words, there is an edge in $\bfL_i$ joining a vertex in $A$ to a vertex in $B$. 

By Step 1, the number of vertices in $\bfL_i$ is uniformly bounded. In particular, for any $e,e'\in V\bfL_i$, there exists a sequence $e = e_0,\dots,e_k=e'\in V\bfL_i$ of adjacent vertices in $\bfL_i$ where $k$ is uniformly bounded independent of $i$. We have $$|g_e\ii g_{e'}|_X \le |g_{e_0}\ii g_{e_1}|_X+\dots +|g_{e_{k-1}}\ii g_{e_k}|_X.$$
Thus, it suffices to bound $|g_e\ii g_{e'}|$ for edges $(e,e')\in E\bfL_i$. 

Let $t\in T_i$ be an element representing $h\in \{h_1,\dots,h_n\}$ such that $(e\ii,1,e')$ is a $\bbC$-subpath of $t$. 
The subpath $(e\ii,1,e')$ is replaced by a single edge $\bar e$ when forming $\bar s$, so $|g_e\ii g_{e'}|_X = d_X(\bar e_-,\bar e_+)$.
Since $\bar{s}$ is a $\nu$-taut $C$-quasigeodesic, we apply \Cref{upgraded BCP} and deduce: either $d_X(\bar e_-,\bar e_+)<2\varepsilon$, or the endpoints $\bar e_-,\bar e_+$ are $\varepsilon$-close in $X$-distance to points $x_-,x_+$ on the fixed geodesic $\gamma$ representing $h$. 
Thus,
\[ |g_e\ii g_{e'}|_X = d_X(\bar e_-,\bar e_+)\le d_X(\bar e_-,x_-)+d_X(x_-,x_+)+d_X(x_+,\bar e_+)\le 2\varepsilon +M. \qedhere\]
\end{claimproof}

A peripheral star $(\bbC, c)$ of $\calA$ is \emph{controlled} if for each $i=1,2,\dots$ 
\begin{enumerate}[label = (P5.\roman*)]
        \item \label{peripheral groups stabilize}$A_c = A_{f_i(c)}$ and
        \item \label{peripheral cosets stabilize}$g_e\ii g_{e'} A_c= g_{f_i(e)}\ii g^{}_{f_i(e')} A^{}_{f_i(c)}$ for $e,e'\in E\bfA$ satisfying $c=e_-=e'_-$.
\end{enumerate} 
Otherwise, $(\bbC,c)$ is \emph{uncontrolled}. 

\begin{step} Up to passing to a subsequence and changing the peripheral vertex and edge groups of $\calA$, we may further assume that 
 \begin{enumerate}[label = (P\arabic*)]\setcounter{enumi}{4}
     \item \label{peripherals stabilize} Each peripheral star $(\bbC, c)$ of $\calA$ is controlled.
 \end{enumerate}
\end{step}

\begin{claimproof}[Proof of Step 3] By Step 2 and passing to a subsequence, we may assume that trivial peripheral stars are controlled. We now consider non-trivial peripheral stars.


If there exists an uncontrolled peripheral star, then there exists an $\bbA$-path $\gamma$ beginning at the basepoint such that only its terminal vertex meets an uncontrolled peripheral star. 
For each $i=1,2,\dots$, there exists an $\bbA_i$-path $\gamma_i$ such that $\nu_\calA(\gamma)=\nu_{\calA_i}(\gamma_i)$ and $f_i$ carries the underlying path of $\gamma$ to the underlying path of $\gamma_i$.
Thus, up to conjugation by $\nu_\calA(\gamma)$, we can suppose the basepoint of $\bfA$ is a non-central vertex of an uncontrolled peripheral star $(\bbC,c)$ corresponding to a peripheral subgroup $P\in \bbP$.  
Let $e\in E\bfC$ be the edge such that $e_-=c$ and $e_+=v_0$ is the basepoint. 

Since peripheral subgroups are abelian, $A_{f_i(c)}=g_{f_i(e)}\ii A^{}_{f_i(c)} g^{}_{f_i(e)}$. Thus $A_{f_i(c)}$ is contained in a peripheral structure $\bbO_{\calA_i}$.
By \ref{WW: peripheral}, $\nu_{\calA_i}(\bbO_{\calA_i})$ is a $(G,\bbP)$ induced peripheral structure on $H_i$.
Since $A_{f_i(c)}\le P$, we must have $A_{f_i(c)} = H_i \cap P$. The ascending sequence $A_{f_1(c)}\le A_{f_2(c)}\le \dots$ stabilizes, since it is contained in $P$, which is finitely generated abelian. 
Hence, up to passing to a subsequence, we may assume that \ref{peripheral groups stabilize} holds for $(\bbC,c)$.

\medskip

We continue on to proving \ref{peripheral cosets stabilize}. For $i=1,2,\dots$, consider the set $$M_i = \{d_X(A_c,\;g_{f_i(e)}\ii g^{}_{f_i(e')} A^{}_c)\;|\;e'\in E\bfC,\; e'_-=c\}$$
where $e$ is the edge that was fixed before.
Note that by taking $e'=e$ we get that $0\in M_i$ for all $i$.
We will prove that there exists $R\ge 0$ such that for all $i\le j$ we have 
\begin{equation}\label{coset distances}
M_i \subseteq N_R(M_j).
\end{equation}
The number of elements in $M_i$ is bounded by the number of edges in $E\bfC$, which is constant by (P2). Thus \Cref{coset distances} implies \[\mu = \sup\cup_i M_i<\infty.\] 
For any $e',e''\in E\bbA$ satisfying $c=e'_-=e''_-$ we have
\begin{equation*}
    d_X(A_c, g_{f_i(e')}^{-1}g^{}_{f_i(e'')}A_c)  \le  d_X(A_c, g_{f_i(e)}^{-1}g^{}_{f_i(e')} A_c) +  d_X(A_c, g_{f_i(e)}^{-1}g^{}_{f_i(e'')}A_c)
    \le 2\mu
\end{equation*}
for all $i$.
There are finitely many translates $gA_c$ with $d_X(A_c,gA_c) \le 2\mu$. 
Thus, by passing to a further subsequence, we may assume \ref{peripheral cosets stabilize} holds for $(\bbC, c)$. 

It remains to show \Cref{coset distances} holds for some $R>0$. 
Fix $i\in \{1,2,\dots\}$, and let $$r:= d_X(A_c,\;g_{f_i(e)}\ii g^{}_{f_i(e')} A^{}_c)$$ for some $e,e'\in E\bfC$ with $e_-=e'_-=c$. 
If $r\le R$, then $r\in N_R(M_j)$ since $0\in M_j$, so we may
suppose $r\ge R$. 
Thus the problem is reduced to finding $R>0$ such that $r\ge R$ implies $r\in N_R(M_j)$.
By \Cref{rem: peripheral turns are realizable} and since $e_+=v_0$, there exists a reduced $\bbA_i$-path $t'_i$ of the form $(1,f_i(e\ii),1,f_i(e'),1,\dots)$ representing an element of $\pi_1(\bbA, v_0)$. 
Let $\bar s_i$ be a $\nu$-taut $C$-quasigeodesic realization of an $\bbA$-path $t_i\in [t'_i]$, by \ref{WW: paths and qi}.
Since both $t_i,t'_i$ are reduced, $t_i$ has the form $(a_0, f_i(e\ii), a_1, f_i(e'), a_2, \dots )$ for some $a_0,a_1,a_2\in A_c$. 
When forming $\bar s_i$, the maximal $f_i(\bbC)$-subpath of the form $(a_0, f_i(e\ii), a_1, f_i(e'), a_2)$ is replaced by an edge $\bar e_i$ with $\lab(\bar e_i)=p_i$ for some $p_i\in P$. 
Since $P$ is abelian, we have $r=d_X(A_c,p_iA_c)$.


Let $g = \nu_{\calA_i}([t_i])\in H_i$.
For any $j\ge i$, since $H_j\ge H_i$, there exists a realization $\bar s_j$ of a reduced $\bbA_j$-path $t_j$ such that $\bar s_j$ is a $\nu$-taut $C$-quasigeodesic by  \ref{WW: paths and qi} and $\nu_{\calA_j}([t_j])=g$. 
Let $\varepsilon = \varepsilon(C,\nu,G,\bbP,X)$ be the taut BCP constant.
By taking $R\ge 2\varepsilon$, we have $\len_X(\bar e_i)\ge r\ge 2\varepsilon$. 
Then, by \Cref{upgraded BCP} there exists a $P$-edge $\bar e_j$ in $\bar s_j$ connected to $\bar e_i$ such that
\begin{equation}\label{close endpoints of pi and pj}
d_X(\bar e_{i-},\bar e_{j-}),d_X(\bar e_{i+},\bar e_{j+})<\varepsilon.
\end{equation}

Letting $p_j = \lab(\bar e_j)$, we have 
$ p_j = u_1 p_i u_2$ where $|u_i|_X\le \varepsilon$, $u_i\in P$.
Since $P$ is abelian, we have $p_j=p_i u$ where $|u|_X\le 2\varepsilon$. In particular, 
\begin{align*}
    |p_j|_X &\ge d_X(A_c,p_jA_c) \\
    &\ge d_X(A_c,p_iA_c) - d_X(A_c,uA_c) \\
    & \ge r-2\varepsilon\ge R-2\varepsilon, \text{ so we have}\\
    \stepcounter{equation}|p_j|_X & \ge R-2\varepsilon.\tag{\theequation}\label{length of pj}
\end{align*}

Recall that $\bar s_j$ is obtained from $t_j$ by taking the path $s_j$ obtained by concatenating geodesics corresponding to the elements in $t_j$ and replacing each maximal peripheral star subpath by a single edge.
Thus, the edge $\bar e_j$ in $\bar s_j$ is obtained in one of the following ways:

\textbf{Case (i).} The edge $\bar e_j$ is an edge of a geodesic path corresponding to $g_y$ for some non-peripheral edge $y\in E\bfA_j$.

By Step 1, the length of such edges is uniformly bounded by some $B_1$. If $R > B_1+2\varepsilon$, then \eqref{length of pj} excludes this case.  

\textbf{Case (ii).} The edge $\bar e_j$ is an edge of a geodesic path $\alpha$ corresponding to an element $a\in A_v$ for some essential vertex $v\in V\bfA_j$.

Let the label of $\bar s_j$ be the concatenation $\sigma \alpha\sigma'$, and let the label of $\alpha$ be the concatenation $\tau p_j \tau'$.
Since $\bar e_j$ is connected to $\bar e_i$ then the prefix $\sigma \tau$ of the label of $\bar s_j$ is in $P$.
We have 
\begin{align*}
\label{Q is in Ac}
    Q &:= \tau \ii A_v \tau \cap P \\
    &=  (\sigma\tau)\ii (\sigma  A_v\sigma\ii) (\sigma  \tau) \cap P\\
    &= (\sigma  A_v\sigma\ii)\cap P\\ 
    &\le H_j\cap P=A_c
\end{align*}
where the third equality holds since $P$ is abelian.
By \Cref{peripheral coset distance in quasiconvex}, there exists $\rho(A_v)$ such that $d_X(p_j Q,Q)<\rho(A_v)$.
Since $Q\le A_c$, we get $$d_X(p_jA_c,A_c)\le d_X(p_jQ,Q)<\rho(A_v).$$
Let $\rho =\max_v\rho(A_v)$, where $v$ ranges over the finitely-many essential vertices in $V\bfA_j$.
Note that $\rho$ is independent of $j$, since the $\calA_j$ are all equivalent.
By \eqref{length of pj}, we may assume that we are not in this case by choosing $R\ge \rho+2\varepsilon$.

\textbf{Case (iii).} The edge $\bar e_j$ is the edge replacing a trivial peripheral star subpath.

By Step 2, the length of such edges is uniformly bounded by some $B_2$. 
Choosing $R>B_2+2\varepsilon$,  \eqref{length of pj} implies that we are not in this case.

\medskip
Taking $R=\max\{B_1,B_2,\rho\}+2\varepsilon$ rules out cases (i)-(iii), so we are left with the following. 
\medskip

\textbf{Case (iv).} The edge $\bar e_j$ is the edge replacing a non-trivial peripheral star subpath.

The label of $\bar s_j$ has the form $\sigma p_j \sigma'$ where $p_j$ is the label of $\bar e_j$. Let $(C',c')$ be the non-trivial peripheral star such that $\bar e_j$ replaces a $\bbC'$-subpath of $s_j$. Since $\bar e_j$ and $\bar e_i$ are connected, the peripheral subgroup associated to $(C', c')$ must be $P$ and consequently $\sigma\in P$.
Therefore,
$$\sigma A_{c'}\sigma^{-1}= \sigma P \sigma^{-1}\cap H_j = P\cap H_j = A_c.$$ 
By \ref{WW: peripheral}, it must be that $c=c'$. 
By \eqref{close endpoints of pi and pj}, letting $r'=d_X(A_cp_j,A_c)\in M_j$, we have, for $r\geq R$, 
$$|r-r'|= |d_X(A_c\; p_i ,A^{}_c)-d_X(A_cp_j,A_c)|\le 2\varepsilon\le R,$$
where the first inequality follows from $p_j=p_iu$ with $|u|_X\le 2\varepsilon$.
This establishes \eqref{coset distances}.

As explained above, it follows from \eqref{coset distances}, that up to passing to a subsequence, we may assume $(\calC, c)$ is controlled. 
The above procedure can be repeated until all peripheral stars are controlled.
\end{claimproof}

As a consequence of \ref{basepoint}-\ref{peripherals stabilize}, the images of $\nu_{\calA_i}$ for $i=1,2,\dots$ are equal. Thus, up to passing to a subsequence, $H_1\le H_2\le\dots$ is a constant sequence. This implies the original sequence stabilizes.
\end{proof}

\begin{corollary}\label{cor: ACC for hyperbolic 3-manifolds}
    Let $M$ be a finite-volume complete hyperbolic 3-manifold. Then $\pi_1(M)$ is $\omega$ACC.
\end{corollary}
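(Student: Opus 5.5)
The plan is to reduce to \Cref{main theorem rel Hyperbolic}. Let $M$ be a complete, finite-volume hyperbolic $3$-manifold and $G=\pi_1(M)$. Then $G$ is torsion-free, since it acts freely and properly on $\mathbb{H}^3$. If $M$ is closed, $G$ is hyperbolic. Otherwise $G$ is hyperbolic relative to its cusp subgroups. Up to passing to the orientation cover these are $\bbZ^2$; in the nonorientable case a cusp group is the fundamental group of a torus or a Klein bottle. So I would first pass to the orientable case.

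Here is the reduction. Let $G_0\le G$ be the orientation subgroup, of index at most $2$. Given an ascending chain $H_1\le H_2\le\dots$ in $G$ with $\rank H_i\le r$, the chain $H_i\cap G_0$ is ascending in $G_0$. Each term has index at most $2$ in $H_i$, hence rank at most $2r-1$ by the Schreier formula. If the intersected chain stabilizes, then the $H_i$ form an ascending chain of groups each containing a fixed subgroup $K$ with index at most $2$. Such a chain can have at most one strict inclusion after it stabilizes, so the $H_i$ stabilize too. Alternatively, if Klein-bottle groups are admissible peripherals in the Weidmann–Weller framework one could skip this step, but \Cref{main theorem rel Hyperbolic} requires free abelian peripherals, so I will reduce. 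Hence we may assume $M$ is orientable. Then $G$ is toral relatively hyperbolic relative to its $\bbZ^2$ cusp subgroups, or is simply hyperbolic when $M$ is closed.

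The main input is that every finitely generated subgroup of $G$ is relatively quasiconvex. This follows from the Tameness Theorem (Agol, Calegari–Gabai) together with Canary's covering theorem. A finitely generated subgroup $H$ is either geometrically finite, which is equivalent to relative quasiconvexity (Hruska), or it is a virtual fiber subgroup. In particular, every subgroup of $H$ of bounded rank is finitely generated, so the real question is whether $H$ is locally relatively quasiconvex. Hence $G$ is locally relatively quasiconvex exactly when no virtual fibers occur.

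The obstacle is precisely the virtual fibers: if $M$ virtually fibers, the fiber groups give ascending chains of the HNN type from the introduction. But such a chain does not stay within fiber subgroups in the right way. So I would split each chain into two kinds. Suppose infinitely many $H_i$ are geometrically infinite. A finitely generated geometrically infinite subgroup is a virtual fiber subgroup $F$, normal in a finite-index subgroup $G_1$ with $G_1/F\cong\bbZ$. If $H_i\le H_j$ are both geometrically infinite, then $H_i$ has finite index in $H_j$. Indeed, fiber subgroups of the same fibration-type contained in one another are commensurable. Any finite-index subgroup of a surface group of bounded rank has bounded index, by the Euler characteristic. So such a chain stabilizes.

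If the $H_i$ are eventually geometrically finite, the chain consists of relatively quasiconvex subgroups. However, \Cref{main theorem rel Hyperbolic} requires local relative quasiconvexity, and subgroups of $H_i$ may be fibers. So the step I expect to need care is checking that the Weidmann–Weller input, \Cref{weidmann weller rewrite}, really only needs the $H_i$ and their essential vertex groups to be relatively quasiconvex. If it needs more, I would instead use that a geometrically finite subgroup which is not a virtual fiber subgroup is itself locally relatively quasiconvex: its finitely generated subgroups are geometrically finite, since by the covering theorem a virtual fiber inside an infinite-index geometrically finite group is impossible. With that in hand, \Cref{main theorem rel Hyperbolic} applies and the chain stabilizes.
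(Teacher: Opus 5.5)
Your proposal follows the same route as the paper. The Tameness and Covering Theorems give the dichotomy geometrically finite versus virtual fiber. A chain that meets virtual fibers stabilizes because an earlier fiber group has finite index in a later one, and the Euler characteristic (equivalently the rank) bounds that index. Geometrically finite chains are handled by \Cref{main theorem rel Hyperbolic}. Two of your extra steps are points the paper passes over, and you should keep them: passing to the orientation cover, so that Klein-bottle cusp groups do not break the toral hypothesis, and checking the \emph{locally} relatively quasiconvex hypothesis rather than plain relative quasiconvexity.

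There is one case you have to add. The claim that ``a geometrically finite subgroup which is not a virtual fiber subgroup is itself locally relatively quasiconvex'' is false for subgroups of finite index in $G$. Such subgroups are lattices, so they are geometrically finite and not virtual fibers. But they contain virtual fiber subgroups whenever $M$ virtually fibers, which by Agol and Wise is always. So \Cref{main theorem rel Hyperbolic} cannot be applied to a chain containing them.

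This case really occurs inside your ``eventually geometrically finite'' branch. For example, a fiber group followed by $G$ itself gives a chain with only one geometrically infinite term.

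Your parenthetical justification implicitly assumes infinite index, and it is really the following elementary argument rather than the covering theorem. Suppose $F\le H$ with $F$ normal in $G_1$, $[G:G_1]<\infty$ and $G_1/F\cong\bbZ$. If the image of $H\cap G_1$ in $\bbZ$ is trivial, then $[H:F]<\infty$ and $H$ is geometrically infinite. If the image is nontrivial, then $[G:H]<\infty$.

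The fix is immediate. If some $H_i$ has finite index in $G$, only finitely many subgroups contain it, so the chain stabilizes. Otherwise every geometrically finite $H_i$ has infinite index, and your argument applies.

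Separately, your opening sentence that every finitely generated subgroup of $G$ is relatively quasiconvex is false because of virtual fibers. Replace it with the dichotomy you state immediately afterwards.
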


\begin{proof}
    Let $H_1\le H_2\le\dots$ be an ascending sequence of bounded rank. 
    Since each $H_i$ is finitely generated it is
    a consequence of the Tameness Theorem~\cites{agol, calegari-gabai} and Canary's Covering Theorem~\cite{canary1996covering} for hyperbolic 3-manifolds that $H_i$ is either geometrically finite or a virtual fiber.

    If some $H_i$ is a virtual fiber, the chain stabilizes at $H_i$. This follows from the rank restriction:\footnote{When $M$ is closed all virtual fiber subgroups are closed surface groups. The following argument is only necessary in the cusped case.} Suppose $H_i$ is a virtual fiber and let $G_i$ be the finite-index subgroup of $G$ such that the corresponding cover of $M$ fibers over the circle with fiber subgroup $H_i$. Algebraically we have the short exact sequence 
    \[ 1\to H_i \to G_i\to \mathbb{Z}\to 1.\]
    Consider $H_j$ for $j\ge i$. The intersection $H_j \cap G_i$ is finite-index in $H_j$. Observe $H_i$ is normal in $H_j\cap G_i$, hence $[H_j\cap G_i : H_i] < \infty$. Thus $[H_j : H_i] < \infty$. However, $\rank(H_i) = \rank(H_j)$, so we conclude $H_i = H_j$.

    Otherwise, suppose $H_i$ are geometrically finite for all $i$.
    The group $\pi_1(M)$ is hyperbolic relative to the cusp groups. Geometrically finite translates to being relatively quasiconvex. Therefore, $H_i$ stabilizes by \Cref{main theorem rel Hyperbolic}.
\end{proof}



\section{Rank bounds for acylindrical graphs of groups}
\label{sec: rank bounds for acylindrical graphs of groups}

\begin{theorem}[Weidmann {\cite{weidmann2015rank}*{Theorem 0.1}}]\label{weidmann rank bound}
Let $\bbG$ be a finite, minimal, $k$-acylindrical graph of groups without trivial edge groups and with a finitely generated fundamental group. Then
\[
 \sum_{v \in V\bfG} \rank G_v - \sum_{e \in E\bfG} \rank G_e + |\calE\bfG| + b_1(\bfG) + 1 + 3k - \left\lfloor \frac{k}{2} \right\rfloor \le (2k+1)\rank\pi_1(\bbG).
\]
where $b_1(\bfG)$ is the first Betti number of $\bfG$.
\end{theorem}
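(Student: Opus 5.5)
The statement above is Weidmann's theorem \cite{weidmann2015rank}*{Theorem 0.1}, and we only use it. Still, here is how we would prove it: by Stallings–Bestvina–Feighn style folding of $\bbG$-graphs, in the form developed by Weidmann. Let $n=\rank\pi_1(\bbG)$ and fix a generating tuple $(x_1,\dots,x_n)$. Represent it by a $\bbG$-graph $\calB_0$, namely a wedge of $n$ subdivided loops with trivial vertex groups, together with a morphism to $\bbG$ that is $\pi_1$-surjective.

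Next, apply a sequence of elementary folds. These are edge identifications, edge-group enlargements, and vertex-group enlargements, in the sense of Bestvina–Feighn and Kapovich–Weidmann–Miasnikov. Each fold preserves $\pi_1$-surjectivity. The process should terminate in a $\bbG$-graph $\calB_\infty$ whose induced morphism to $\bbG$ is an isomorphism of graphs of groups; minimality of $\bbG$ is used here to ensure $\calB_\infty$ covers every vertex and edge. Termination needs care when vertex groups are not Noetherian. We would handle it as Weidmann does: fold only along finitely many prescribed elements and replace vertex groups by the finitely generated subgroups actually needed. This gives a finite $\bbG$-graph mapping onto the "carrier" of the generators.

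Along the folds we track a complexity of the intermediate $\bbG$-graph $\calB$:
\[
c(\calB)=\sum_{v}\rank B_v-\sum_{e}\rank B_e+|\calE\bfB|+b_1(\bfB).
\]
We bound it from the start by the number of generators. The naive bound, that folds never increase $c$ by more than what the generators pay for, gives only $c(\calB_\infty)\le n$ plus error terms. The error comes from folds that create "tails": edges whose groups become nontrivial only through long paths.

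The main obstacle is producing the multiplicative constant $2k+1$ and the additive term $1+3k-\lfloor k/2\rfloor$. Our plan is to use $k$-acylindricity as follows. Along any path of length $k+1$ in the Bass–Serre tree the pointwise stabilizer is trivial. So a vertex group element of $\calB$ can be "pushed" through at most $k$ consecutive edges before the corresponding edge groups must differ. Hence each generator, or each unit of $b_1$, can be charged to at most $2k+1$ terms of the left-hand side: $k$ edges on each side of a vertex, plus the vertex. We would formalize this by a discharging argument on a spanning tree of $\calB_\infty$. Every positive contribution $\rank G_v-\rank G_e$ and every geometric edge is assigned to a generator within distance $k$. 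The boundary vertices of this tree account for the constant term. We expect this charging argument, and in particular getting exactly $3k-\lfloor k/2\rfloor$, to be the delicate part. Our first attempt would be to split into the cases $k$ even and $k$ odd and count how many endpoints of maximal essential segments can share a generator.
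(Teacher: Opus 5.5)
Citing \cite{weidmann2015rank} is exactly what the paper does. The statement is imported as a black box and used only through \Cref{simplified weidemann rank bound}. The sketch you add, however, is not a proof, and it has concrete problems beyond the step you flag as delicate. First, your complexity does not start at $n$. In the subdivided wedge $\calB_0$, the term $|\calE\bfB_0|$ is the total length of the underlying $\bfG$-paths of the $x_i$, so $c(\calB_0)$ is not bounded in terms of $n$ at all. A quantity such as $\sum_v\rank B_v-\sum_e\rank B_e+b_1(\bfB)$ is at most $n$ initially. But then the term $|\calE\bfG|$ must be obtained separately, and it is bounded only because of acylindricity together with the non-triviality of edge groups. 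Second, running the folding with the finitely generated subgroups ``actually needed'' bounds the ranks of those subgroups, not $\rank G_v$. That only changes if you also show the process ends at an isomorphism onto $\bbG$.

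The decisive gap is the charging step, which is the whole theorem rather than an error term. Without acylindricity, $\sum_v\rank G_v-\sum_e\rank G_e$ is not bounded by any function of $\rank\pi_1(\bbG)$ \cite{weidmann2002nielsen}*{Theorem 5}. A fold that moves an element across an edge adds a generator to a vertex group, while the rank of the enlarged edge group may stay the same. So $\sum_v\rank B_v-\sum_e\rank B_e$ is not controlled fold by fold. You give neither a charging rule nor a quantity that folds preserve. Since one generator's path can cross arbitrarily many edges of $\bfG$, nothing in the plan explains why a generator is charged at most $2k+1$ times. The remark that an element can be ``pushed through at most $k$ edges'' merely restates acylindricity of $T_\bbG$; it gives no estimate on ranks in the intermediate graphs.

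Finally, as quoted, the inequality fails for a one-vertex graph of groups with no edges and vertex group $\bbZ$. All hypotheses hold, vacuously or trivially, yet the left side is $2+3k-\lfloor k/2\rfloor>2k+1$ for every $k$. So any derivation of the additive constant must use that $\bfG$ has an edge, presumably a non-degeneracy hypothesis in Weidmann's original. Your boundary-vertex accounting never says where this enters. This is harmless for the paper, since \Cref{simplified weidemann rank bound} is trivially true in that case.
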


Recall that $\calE\bfG$ is the set of geometric edges. In particular, $|\calE\bfG| = \tfrac12|E\bfG|.$

\begin{corollary}\label{simplified weidemann rank bound}
    Let $\bbG$ be a finite, $k$-acylindrical graph of groups with a finitely generated fundamental group. Then
\begin{equation}\label{simplified rank bound}
 \sum_{v \in V\bfG} \rank G_v - \sum_{e \in E\bfG} \rank G_e \le (2k+1)\rank\pi_1(\bbG) .
\end{equation}
\end{corollary}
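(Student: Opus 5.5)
We derive the corollary from \Cref{weidmann rank bound} by removing its two extra hypotheses (minimality and no trivial edge groups) and then discarding the nonnegative terms on the left-hand side.

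\textbf{Reduction to minimal.} If $\bbG$ is not minimal, by \Cref{minimality criterion} we repeatedly delete a degree one vertex $e_+$ with $G_{e_+}=(G_e)_+$, together with the edge $\{e,e\ii\}$. Each such deletion removes $\rank G_{e_+}$ from the vertex sum and $\rank G_e+\rank G_{e\ii}=2\rank G_e$ from the edge sum (edges come in pairs $e,e\ii$ with $G_e=G_{e\ii}$). Since $G_e\cong G_{e_+}$, the left-hand side of \eqref{simplified rank bound} increases by $\rank G_e\ge 0$. Hence it suffices to prove the inequality for the minimal graph of groups. Deletion does not change $\pi_1$, and the Bass--Serre tree of the result is a subtree of the original, so $k$-acylindricity is preserved. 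If the minimal subtree does not exist (e.g.\ a single vertex), the inequality reads $\rank G_v\le(2k+1)\rank G_v$, which is trivial.

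\textbf{Removing trivial edge groups.} If some edge groups are trivial, collapse (or cut) along them. Removing the geometric edges with trivial groups splits $\bfG$ into components $\bbG_1,\dots,\bbG_m$, and $\pi_1(\bbG)$ is a free product of the groups $\pi_1(\bbG_i)$ and a free group of rank $b$. Grushko gives $\rank\pi_1(\bbG)=\sum_i\rank\pi_1(\bbG_i)+b$. Each $\bbG_i$ is still $k$-acylindrical, since its tree embeds in $T_\bbG$, and has finitely generated fundamental group, since it is a free factor. Deleted trivial edges contribute $0$ to the edge sum. Summing the inequality over components therefore gives the bound for $\bbG$. For each component we first apply the minimality reduction above; the minimality step may create new leaves, but that is harmless since we simply minimize after splitting.

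\textbf{Conclusion.} For a minimal $\bbG_i$ without trivial edge groups, \Cref{weidmann rank bound} applies. The terms $|\calE\bfG|+b_1+1+3k-\lfloor k/2\rfloor$ are nonnegative, so dropping them yields \eqref{simplified rank bound}.

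The main point requiring care is the bookkeeping in the minimality reduction: showing that the left-hand side only grows, which uses that the edge sum counts both orientations. The Grushko step is routine.
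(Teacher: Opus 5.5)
Your proposal is correct and follows the paper's proof essentially step for step: apply \Cref{weidmann rank bound} in the minimal case with nontrivial edge groups, remove leaves (your computation that the left-hand side grows by exactly $\rank G_e$ is a precise version of the paper's inequality), and split along trivial edges using Grushko. Your separate treatment of the case where leaf removal ends at a single vertex is a small point the paper glosses over; there \Cref{weidmann rank bound} cannot literally be applied, since its left-hand side would exceed the right-hand side.
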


\begin{definition}
If $\bfA$ is a connected subgraph of $\bfG$, we let $\bbG\vert_{\bfA}$ denote the restriction of $\bbG$ to $\bfA$.
\end{definition}

\begin{proof}
    \textbf{Case 1:} If $\bbG$ is minimal without trivial edge groups then the corollary follows directly from \Cref{weidmann rank bound}. 

    \textbf{Case 2:} If $\bbG$ has no trivial edge group but is not minimal, then the minimal subgraph of groups is obtained by a finite sequence of leaf removals (see \Cref{minimality criterion}). More precisely, if $\bbG$ is not minimal, then there exists some vertex $e'_+$ of degree 1, such that $G_{e'_+}=G_{e'}$.
    By removing $e'_+$ and the geometric edge $\{e',(e')\ii\}$ one obtains a smaller graph of groups $\bbG-e'$ with underlying graph $\bfG-e'$ whose fundamental group is the same as that of $\bbG$.
    Note that $\rank G_{e'_+}=\rank G_{e'}$ and so $$\sum_{v \in V(\bfG-e')} \rank G_v - \sum_{e \in E(\bfG-e')} \rank G_e \ge \sum_{v \in V\bfG} \rank G_v - \sum_{e \in E\bfG} \rank G_e$$ 
    Thus inequality \eqref{simplified rank bound} holds for $\bbG$ if it holds for $\bbG-e'$. 
    
    Since this process eventually terminates with the minimal subgraph $\bbG'$ of groups of $\bbG$. By Case 1, \eqref{simplified rank bound} holds for $\bbG'$ and so it holds throughout the process, and in particular holds for $\bbG$.
    
    \textbf{Case 3:} If $\bbG$ has some trivial edge groups, then let $\bfA_1,\dots,\bfA_k$ denote the components of the complement of the trivial edges in $\bfG$.
    Let $A_i = \pi_1(\bbG\vert_{\bfA_i}) \le G$. 
    Then, $G = A_1 * \dots *A_k *F_r$ for some $0\le r\in \bbZ$, and so by Grushko's Theorem
    \begin{equation}\label{eq: grushko} 
    \sum_{i=1}^k\rank A_i \le \rank G.
    \end{equation}
    By Case 2, \eqref{simplified rank bound} holds for each $A_i=\pi_1( \bbG|_{\bfA_i})$. That is,
    $$ \sum_{v\in V\bfA_i} \rank G_{v} - \sum_{e\in E\bfA_i} \rank G_e \le (2k+1)\rank  A_i.$$
    Summing over all $i=1,\dots,k$ and using \eqref{eq: grushko} gives \eqref{simplified rank bound} for $G$.
\end{proof}

The acylindricity assumption in \Cref{simplified weidemann rank bound} is necessary (see \cite{weidmann2002nielsen}*{Theorem 5}), and the statement can be viewed as a partial converse to the following observation.

\begin{observation}\label{rank observation}
    Let $\bbG$ be a finite graph of groups, and let $G = \pi_1(\bbG)$, then $$\rank(G) \le \sum_{v\in V\bfG} \rank G_v  + b_1(\bfG)$$
\end{observation}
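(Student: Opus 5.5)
The observation bounds $\rank G$ by $\sum_{v\in V\bfG}\rank G_v + b_1(\bfG)$, where $G=\pi_1(\bbG)$ is the fundamental group of the finite graph of groups $\bbG$. I will prove it by writing down an explicit generating set for $\pi_1(\bbG,v_0)$ of the required size, using the standard presentation of the fundamental group of a graph of groups relative to a maximal tree. If some $G_v$ is not finitely generated the right-hand side is infinite and there is nothing to prove, so assume every vertex group is finitely generated. For each vertex $v$ fix a generating set $S_v$ of $G_v$ with $|S_v|=\rank G_v$.

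Choose a maximal tree $\bfT\subseteq\bfG$. For each vertex $v$, let $p_v$ be the unique reduced path in $\bfT$ from $v_0$ to $v$. Let $\gamma_v$ be the $\bbA$-path with underlying path $p_v$ and all vertex entries trivial, so that $[\gamma_v\, a\, \gamma_v^{-1}]\in\pi_1(\bbG,v_0)$ for each $a\in G_v$. Next, fix an orientation of $\bfG$. For each oriented edge $e$ not in $\bfT$, let $\tau_e=[\gamma_{e_-}\,(1,e,1)\,\gamma_{e_+}^{-1}]$; there are exactly $b_1(\bfG)$ such geometric edges. The candidate generating set is
\[
\{[\gamma_v s\gamma_v^{-1}] : v\in V\bfG,\ s\in S_v\}\cup\{\tau_e : e\notin\bfT \text{ oriented}\},
\]
which has cardinality $\sum_v\rank G_v+b_1(\bfG)$.

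To see that this set generates, take an arbitrary $\bbA$-path $t=(a_0,e_1,a_1,\dots,e_n,a_n)$ representing an element of $\pi_1(\bbG,v_0)$. Insert $\gamma_{v_i}^{-1}\gamma_{v_i}$ after each vertex entry, where $v_i$ is the vertex of $a_i$. This rewrites $[t]$ as a product of elements $[\gamma_{v_i}a_i\gamma_{v_i}^{-1}]$ and elements $[\gamma_{(e_i)_-}(1,e_i,1)\gamma_{(e_i)_+}^{-1}]$. The inserted pairs are trivial by the second equivalence relation, applied repeatedly along the tree path.

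If $e_i\notin\bfT$, the edge factor is $\tau_{e_i}^{\pm1}$. If $e_i\in\bfT$, the path $\gamma_{(e_i)_-}(1,e_i,1)\gamma_{(e_i)_+}^{-1}$ backtracks entirely along $\bfT$, since $p_{(e_i)_+}$ is $p_{(e_i)_-}$ extended by $e_i$ or vice versa. So it reduces to the trivial element by the cancellation relation with trivial vertex entries. Each vertex factor is a word in the $[\gamma_v s\gamma_v^{-1}]$, because conjugation by $\gamma_v$ is a homomorphism $G_v\to\pi_1(\bbG,v_0)$.

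There is no serious obstacle here; the only care needed is the bookkeeping showing that tree edges contribute trivially. That is a direct application of the two defining relations of $\sim$, using that the vertex entries of the $\gamma_v$ are all $1$.
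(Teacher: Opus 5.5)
Your proof is correct: the maximal-tree generating set $\{[\gamma_v s\gamma_v^{-1}]\}\cup\{\tau_e\}$ has exactly $\sum_{v}\rank G_v + b_1(\bfG)$ elements, and your check that tree edges contribute trivially is sound. The paper states this as an observation without proof, and your argument is the standard justification it relies on implicitly.
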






    
    
        
\section{Complexity of subgroups}
\label{sec: combination theorem}

\begin{notation}
    Let $G = \pi_1(\bbG)$. For a finitely generated $H\le G$, let $T_H$ denote a minimal tree for $H\actson T_\bbG$. Such a tree exists if $H$ is finitely generated, and it is unique if $H\actson T_\bbG$ is non-elliptic. Let $\bbG_H$ denote the graph of groups for the action $H\actson T_H$. 
\end{notation}

Note that $\bbG_H$ is finite since $H$ is finitely generated.
If $H_1\le H_2$ are non-elliptic finitely generated subgroups of $G$, then the inclusion $T_{H_1}\hookrightarrow T_{H_2}$ descends to a graph morphism $\bfG_{H_1}\to \bfG_{H_2}$.

\begin{definition}\label{definition of complexity}
    For a finitely generated $H\le G$, the \emph{complexity of $H$} is $$C(H) = (\varepsilon'(\bbG_H) , \rank H)$$ with the lexicographic order, where $\varepsilon'(\bbG_H)$ is the number of geometric edges in $\bfG_H$ with a non-trivial edge group.
\end{definition}

\begin{setup}\label{setup}
    Let $\bbG$ be a $k$-acylindrical, finite graph of groups, such that all edge groups $G_e$ are finitely generated abelian groups.
\end{setup}

\begin{lemma}\label{complexity bounds rank}
    Assume \Cref{setup}. Let $\bbG_H$ be the graph of groups associated to a finitely generated subgroup $H\le G$. Let $\bfK\subset \bfG_H$ be a proper connected subgraph  and let $K=\pi_1(\bbG|_{\bfK})$. Then:
    \begin{enumerate}
        \item $C(K)<C(H)$.
        \item There exists $\gamma=\gamma(\bbG,m,r)$ such that: If $\rank H\le r$ and $\varepsilon'(\bbG_H)\le m$, then $\rank K\le \gamma$.
    \end{enumerate}
\end{lemma}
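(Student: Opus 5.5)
The plan is to compare $\bbG_K$ with the restricted graph of groups $\bbG|_{\bfK}$, and to use Grushko for (1) and the acylindrical rank bound for (2).

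First, a structural observation. Let $\tilde{\bfK}\subset T_H$ be a component of the preimage of $\bfK$ under the quotient map $T_H\to \bfG_H$, chosen so that its stabilizer is $K$. This subtree is $K$-invariant and $\tilde{\bfK}/K=\bfK$. The minimal $K$-subtree $T_K$ lies inside $\tilde{\bfK}$, so $\bfG_K$ embeds as a subgraph of $\bfK$. Moreover, each edge group of $\bbG_K$ equals the corresponding edge group of $\bbG|_{\bfK}$, since both are stabilizers in $K$ of the same edge of $T_\bbG$. Hence $\varepsilon'(\bbG_K)$ is at most the number of geometric edges of $\bfK$ with non-trivial edge group, which is at most $\varepsilon'(\bbG_H)$.

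\textbf{Part (1).} If $\bfK$ misses some non-trivial edge of $\bfG_H$, then $\varepsilon'(\bbG_K)<\varepsilon'(\bbG_H)$ and we are done. Otherwise every edge of $\bfG_H\setminus\bfK$ has trivial edge group. Collapsing these trivial edges shows that $H = K * L$, where $L$ is a free product of the fundamental groups of the remaining pieces together with a free group. The main point is that $L\neq 1$.
\begin{itemize}
\item If some missing edge is non-separating, it contributes a $\bbZ$ free factor.
\item Otherwise the complement of $\bfK$ consists of subtrees of groups attached along trivial edges. Such a subtree cannot have trivial fundamental group: it would be a finite tree with all vertex groups trivial, so it would contain a leaf $v\neq$ (attaching point) with $G_v=1=G_e$. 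This contradicts minimality of $\bbG_H$ by \Cref{minimality criterion}.
\end{itemize}
Grushko's theorem then gives $\rank K<\rank K+\rank L=\rank H$, so $C(K)<C(H)$ lexicographically. I expect this minimality and leaf argument to be the only delicate step.

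\textbf{Part (2).} First, $\bbG_H$ inherits the relevant properties of $\bbG$:
\begin{itemize}
\item it is $k$-acylindrical, since $T_H\subset T_\bbG$ and $H\le G$;
\item its edge groups are subgroups of conjugates of the $G_e$, hence abelian with rank at most $a:=\max_e \rank G_e$.
\end{itemize}
Applying \Cref{simplified weidemann rank bound} to $\bbG_H$, and noting that at most $2m$ oriented edges have non-trivial groups, gives
\[
\sum_{v\in V\bfG_H}\rank G_v \le (2k+1)r + 2ma .
\]
Next, \Cref{rank observation} applied to $\bbG|_{\bfK}$ gives
\[
\rank K\le \sum_{v\in V\bfK}\rank G_v + b_1(\bfK).
\]
Finally, $b_1(\bfK)\le b_1(\bfG_H)\le \rank H\le r$, because $H$ surjects onto $\pi_1(\bfG_H)$, a free group of rank $b_1(\bfG_H)$. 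Combining these, we may take $\gamma=(2k+2)r+2ma$.
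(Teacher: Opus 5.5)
Your proposal is correct and follows essentially the same route as the paper. For (1), you use that $\bfG_K$ sits inside $\bfK$, so $\varepsilon'$ cannot increase. When only trivial edges are dropped, minimality of $\bbG_H$ forces a nontrivial complementary free factor, and Grushko then makes the rank drop; you spell out this leaf argument, which the paper only sketches as ``removing the first edge strictly decreases rank''. For (2), you combine \Cref{simplified weidemann rank bound} on $\bbG_H$ with \Cref{rank observation} just as the paper does, and your constant $2ma$, which counts oriented edges, is slightly more careful than the paper's $pm$, though this does not matter.
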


\begin{proof}
    The graph of groups $\bbG_K$ can be obtained from $\bbG_H$ by deleting a sequence of edges, first to obtain $\bbG|_\bfK$ and then to make it minimal as in \Cref{minimality criterion}. 
    In particular, $\bfG_K$ is a subgraph of $\bfK$. 
    No edge removal increases $\varepsilon'$. Since $\bbG_H$ is minimal, removing the first edge $e$ either: 
    \begin{itemize}
        \item strictly decreases $\varepsilon'$ when $G_e\ne 1$, or 
        \item strictly decreases $\rank$ when $G_e = 1$.
    \end{itemize} The inequality $C(K)<C(H)$ follows.

    \medskip
    
    Let $p=p(\bbG)$ be the maximal rank of an edge group of $\bbG$. Since edge groups are abelian, $p$ bounds the rank of the edge subgroups of $\bbG_H$. Then we have $$\sum_{e\in E\bfG_H}\rank G_e\le pm.$$ 
    Applying \Cref{simplified weidemann rank bound} to $\bbG_H$ and combining with the above inequality, we have $$\sum_{v\in V\bfG_H}\rank G_v\le (2k+1)\rank H +pm \le (2k+1)r+pm.$$
    Since $\pi_1(\bfG_H)$ is a retract of $H$, we have $b_1(\bfG_H)\le\rank H \le r$. By \Cref{rank observation}, we may bound $\rank K$ as follows
    \begin{align*}
        \rank K &\le \sum_{v\in V\bfG_K}\rank G_v + b_1(\bfG_K)\\
        &\le \sum_{v\in V\bfG_H}\rank G_v + b_1(\bfG_H)\\
        &\le (2k+2)r+pm.\qedhere
    \end{align*}
\end{proof}

\begin{lemma} \label{surjective cores or elliptic} 
Assume \Cref{setup}. Let $H_1\le H_2\le \dots$ be an ascending chain of subgroups of $G$ such that $\sup_i\{\varepsilon'(\bbG_{H_i}), \rank H_i\}<\infty$. There exists an ascending chain $K_1\le K_2\le\dots$ of uniformly bounded rank subgroups of $G$ such that the following hold: 
    \begin{itemize}
        \item Either all $K_i$ are elliptic, or all $K_i$ are non-elliptic and each $\bfG_{K_i}\to \bfG_{K_{i+1}}$ is surjective.
        \item If $K_1\le K_2\le \dots$ stabilizes, then $H_1\le H_2\le\dots$ stabilizes.
    \end{itemize}
\end{lemma}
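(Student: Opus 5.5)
We argue by induction on the complexity $C$ of \Cref{definition of complexity}. The lexicographic order on pairs bounded by $(m,r)$, where $m=\sup_i\varepsilon'(\bbG_{H_i})$ and $r=\sup_i\rank H_i$, is a well-order. Throughout, \Cref{complexity bounds rank}(2) keeps every rank we produce bounded by a constant $\gamma(\bbG,m,r)$.

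First we dispose of elliptic terms. If some $H_i$ is non-elliptic, then every later $H_j$ is non-elliptic, so after discarding a finite initial segment we are in one of two situations. Either all $H_i$ are elliptic, in which case we take $K_i=H_i$. Or all $H_i$ are non-elliptic, and each inclusion $T_{H_i}\hookrightarrow T_{H_{i+1}}$ descends to a graph morphism $\phi_i:\bfG_{H_i}\to\bfG_{H_{i+1}}$. If every $\phi_i$ from some index on is surjective, we take $K_i$ to be that tail.

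Otherwise there are infinitely many indices $i$ for which $\phi_i$ is not surjective. For such an $i$, the image $\phi_i(\bfG_{H_i})$ is a connected subgraph. Take $\bfK\subsetneq\bfG_{H_{i+1}}$ to be this image, and let $L_i=\pi_1(\bbG_{H_{i+1}}|_{\bfK})$, with basepoint chosen compatibly so that $H_i\le L_i\le H_{i+1}$. We need $H_i\le L_i$. This holds because $T_{H_i}$ maps into the preimage of the image subgraph, and that preimage is the $L_i$-orbit of a lift, so $H_i$ acts on a subtree whose vertex and edge stabilizers lie in $L_i$. By \Cref{complexity bounds rank}, $C(L_i)<C(H_{i+1})$ and $\rank L_i\le\gamma$.

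Interleaving gives $H_{i_1}\le L_{i_1}\le H_{i_1+1}\le H_{i_2}\le L_{i_2}\le\dots$, so the chain $(L_{i_k})$ stabilizes if and only if $(H_i)$ does. For the induction we need complexity to drop uniformly along the new chain. I would handle this as follows. Since $(\varepsilon',\rank)$ takes finitely many values in the bounded box, pass to a subsequence on which $C(H_i)$ is constant, say equal to $c$. Then every $C(L_{i_k})<c$, and $\varepsilon'(\bbG_{L_{i_k}})\le m$ and $\rank L_{i_k}\le\gamma$ are uniformly bounded. Passing to a further subsequence, $C(L_{i_k})$ is constant, equal to some $c'<c$.

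The step I expect to be the main obstacle is setting up the induction correctly. The rank bound for the $L$'s is $\gamma(m,r)$, which may exceed $r$. So a naive lexicographic induction over all pairs with $\rank\le r$ fails, because the box is not preserved. The fix is to induct on $\varepsilon'$ first, since it does not increase. Within a fixed value of $\varepsilon'$ the rank must strictly decrease along $c'<c$, so there is no blow-up there. When $\varepsilon'$ strictly drops, the rank bound may be replaced by $\gamma$, which is allowed because the inductive hypothesis for smaller $m$ is quantified over all rank bounds.

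Under this hypothesis we apply induction to $(L_{i_k})$ to get a chain $(K_i)$ with the desired dichotomy, whose stabilization implies that of $(L_{i_k})$, and hence that of $(H_i)$. The base case is when no non-surjective step can occur. This covers the elliptic case, and also the case where $\bfG_{H_i}$ has no edges with nontrivial group and the rank is minimal. In both, a tail of the original chain already satisfies the statement. Finally, the morphisms $\phi_i$ are well defined only up to the choice of minimal tree, which is unique in the non-elliptic case, so this causes no issue.
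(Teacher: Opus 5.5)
Your proposal is correct and is essentially the paper's argument: a well-founded induction in the lexicographic order on $C=(\varepsilon',\rank)$, where a non-surjective step $\bfG_{H_i}\to\bfG_{H_{i+1}}$ is replaced by $\pi_1$ of the restriction to its image and \Cref{complexity bounds rank} gives the drop in complexity, with the rank allowed to jump to $\gamma(\bbG,m,r)$ when $\varepsilon'$ drops. The paper accommodates that jump by carrying a separate hypothesis $\sup_i\rank H_i<\infty$ alongside the lexicographic bound $(m,q)$, which is equivalent to your device of making $C$ constant along subsequences. Your indexing $H_i\le L_i\le H_{i+1}$ is cleaner than the paper's. The containment $H_i\le L_i$ is most precisely justified as follows: $H_i$ preserves the component of the preimage of $\bfK$ in $T_{H_{i+1}}$ that contains $T_{H_i}$, and the setwise stabilizer of that component is (a suitable conjugate of) $L_i$.
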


\begin{proof}
    By induction on $(m,q)$ with lexicographic order, we prove that if the sequence $H_1\le H_2\le \dots$ satisfies $C(H_i)\le (m,q)$ for all $i\in \bbN$ and $\sup \rank H_i <\infty$ then the lemma holds for the sequence $H_i$. The base case, $C(H_i)=(0,0)$ for all $i$, implies each $H_i$ is trivial. Letting $m=\sup\varepsilon'(\bbG_{H_i})$ suppose there exists $q\geq 0$ such that $C(H_i) \le (m,q)$ for all $i$.

    If infinitely many of the $H_i$ are elliptic, then all the $H_i$ are elliptic, and the lemma holds for $K_i=H_i$.

    Thus, up to removing finitely many of the $H_i$, we may suppose each $H_i$ acts non-elliptically and hence has a minimal tree $T_{H_i}$. If only finitely many of the maps $\bfG_{H_i}\to \bfG_{H_{i+1}}$ are non-surjective, then again after removing finitely many subgroups we may assume each $\bfG_{H_i}\to \bfG_{H_{i+1}}$ is surjective. The conclusion holds for $K_i=H_i$. 

    Thus, passing to a subsequence, we may assume each $H_i$ acts non-elliptically and for infinitely many indices $i$, the map $\bfG_{H_i}\to \bfG_{H_{i+1}}$ is non-surjective. For each such $i$, let $K_{i+1}=\pi_1(\bbG|_{\bfA_i})$ where $\bfA_i$ is the image of $\bfG_{H_{i-1}}\to \bfG_{H_i}$.
    Since $H_i \le K_{i+1} \le H_{i+1}$, the $K_i$ sequence is ascending and stabilizes if and only if the $H_i$ sequence stabilizes. By \Cref{complexity bounds rank}, $C(K_i)<C(H_i)$ and $\rank(K_i)\leq \gamma(\bbG,m,r)$ where $r=\sup\rank H_i$. Additionally, we have:
    \begin{itemize}
        \item $C(K_i)\leq (m,q-1)< (m,q)$ if $q>1$.
        \item $C(K_i)\leq (m-1,\gamma(\bbG,m,r))< (m,0)$ if $q=0$.
    \end{itemize}
   Hence the lemma follows by induction.
\end{proof}

\begin{lemma}\label{bounded complexity implies ACC}
    Assume \Cref{setup}. Suppose that each vertex group $\{G_v\}_{v\in V\bfG}$ satisfies $\omega$ACC. Then any ascending chain of subgroups with uniformly bounded rank and complexity stabilizes.
\end{lemma}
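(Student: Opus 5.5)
First I would apply \Cref{surjective cores or elliptic} to the chain $H_1\le H_2\le\dots$. This produces an ascending chain $K_1\le K_2\le\dots$ of uniformly bounded rank, and it suffices to show that this chain stabilizes. There are two cases.

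\textbf{Elliptic case.} Suppose all $K_i$ are elliptic. Each $K_i$ fixes a vertex of $T_\bbG$, so it lies in a conjugate of a vertex group. I would show the chain eventually lies in a single conjugate $gG_vg\ii$, and then invoke $\omega$ACC for $G_v$.

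If $K_i$ is nontrivial and fixes two vertices, it fixes the path between them. By $k$-acylindricity that path has length at most $k$. Hence $\operatorname{Fix}(K_i)$ is a nonempty subtree of diameter at most $k$. These subtrees are nested decreasing, $\operatorname{Fix}(K_{i+1})\subseteq\operatorname{Fix}(K_i)$. A decreasing chain of nonempty subtrees of bounded diameter has nonempty intersection: choose $x_i\in\operatorname{Fix}(K_i)$; all $x_i$ lie in the ball of radius $k$ around $x_1$ inside the finite-diameter tree $\operatorname{Fix}(K_1)$.

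The catch is that this ball can be infinite, since vertex degrees may be infinite. To avoid this, I would use that a nested sequence of subtrees of diameter at most $k$ in a tree has nonempty intersection. The reason is that the centers (a vertex or edge midpoint) of the nested subtrees must eventually stabilize. So some vertex $v$ is fixed by every $K_i$, all $K_i$ lie in the stabilizer $G_{\tilde v}\cong G_v$, and $\omega$ACC gives stabilization.

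\textbf{Non-elliptic case.} Suppose the maps $\bfG_{K_i}\to\bfG_{K_{i+1}}$ are surjective. The number of geometric edges with nontrivial edge group is bounded by hypothesis. The trivial-edge part is controlled by rank (via Grushko), and degree-two trivial vertices are removed by minimality. Together these bound $|E\bfG_{K_i}|$.

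Surjective graph morphisms between graphs with a bounded number of edges are eventually bijective, since $|E\bfG_{K_i}|$ is nondecreasing and bounded. After discarding finitely many terms, I identify all $\bfG_{K_i}$ with a fixed graph $\bfA$ compatibly. I would then use $T_{K_i}\subseteq T_{K_{i+1}}$ and the bijection on quotients to see that $T_{K_i}=T_{K_j}=:T$. Indeed, every $K_j$-orbit of edges of $T$ meets $T_{K_i}$, and the minimal subtrees of nested groups agree once the quotients coincide.

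Choosing lifts consistently, the vertex groups $(K_i)_v=K_i\cap\operatorname{Stab}(\tilde v)$ form ascending chains inside conjugates of vertex groups of $\bbG$. Their ranks are bounded by \Cref{simplified weidemann rank bound}, in the form of the estimate used in \Cref{complexity bounds rank}. So each such chain stabilizes by $\omega$ACC for $G_v$.

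Finally, $K_i$ is generated by its vertex groups on a fixed lift of a maximal tree together with the stable letters for the remaining edges. I would need those stable letters to be constant in $i$. Since the edge sets of the fundamental domain coincide, I can choose one fundamental domain in $T$ for all $i$, so the stable letters are the same elements. Hence $K_i$ stabilizes once its vertex groups do.

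\textbf{Main obstacle.} I expect the hardest step to be establishing that $T_{K_i}$ is eventually constant and that bijectivity on quotient graphs yields a common fundamental domain. This must be argued carefully, using that stabilizers can only grow and the graph morphism is an isomorphism.
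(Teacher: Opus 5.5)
Your route is the paper's. You pass to the chain of \Cref{surjective cores or elliptic}, handle the elliptic case by nested fixed subtrees of diameter at most $k$, and in the non-elliptic case first stabilize the quotient graph and then the vertex groups via $\omega$ACC. Three of your additions make explicit what the paper's proof only asserts:
\begin{itemize}
\item your centre argument, which justifies that the nested fixed subtrees have nonempty intersection;
\item the rank bound on the vertex groups of $\bbG_{K_i}$ from \Cref{simplified weidemann rank bound}, without which $\omega$ACC cannot be invoked;
\item the stable-letter description of $K_i$.
\end{itemize}

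The step you flag as the main obstacle is \emph{false} as stated. Nested subgroups with isomorphic quotient graphs need not have the same minimal subtree. Take $G=\langle a\rangle*\langle b\rangle$ acting on its Bass--Serre tree $T$, with $\tilde e$ the edge from the vertex fixed by $a$ to the vertex fixed by $b$. Let $K=\langle a^2\rangle*\langle b\rangle$. Its minimal subtree is $K\tilde e$, and $K\tilde e/K$ is a single edge mapping isomorphically onto $T/G$. Yet $a\tilde e\notin K\tilde e$, because edge stabilizers are trivial and $a\notin K$. So $T_{K_1}=T_{K_2}$ can fail in the chain $K_1=K<K_2=G$. The trees $T_{K_i}$ can only be expected to agree once the vertex stabilizers have already stabilized.

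Fortunately you do not need this claim. Fix once and for all, inside $T_{K_1}$ and $K_1$:
\begin{itemize}
\item a lift $\tilde Y$ of a maximal tree;
\item lifts $\tilde e$ of the remaining edges;
\item elements $g_e\in K_1$ with $\tilde e_+\in g_e\tilde Y$.
\end{itemize}
Since $T_{K_1}\subseteq T_{K_i}$ and $\bfG_{K_1}\to\bfG_{K_i}$ is an isomorphism, the same data is a fundamental domain for $K_i\actson T_{K_i}$. This is what the paper's ``choosing appropriate orbit representatives'' amounts to. Hence $K_i$ is generated by the $\operatorname{Stab}_{K_i}(\tilde v)$, $\tilde v\in\tilde Y$, together with the fixed $g_e$, and your conclusion goes through.

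A smaller slip: surjectivity of $\bfG_{K_i}\to\bfG_{K_{i+1}}$ makes the numbers of vertices and edges \emph{non-increasing}, not nondecreasing. So both counts stabilize without any appeal to complexity or Grushko. You also need the vertex count, not just the edge count, to stabilize in order to get a graph isomorphism.
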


\begin{proof}
    Let $H_1\le H_2\le \dots$ be an ascending chain of subgroups with uniformly bounded rank and complexity, and let $K_1\le K_2\le\dots$ be the chain of subgroups obtained by applying \Cref{surjective cores or elliptic} to $H_1\le H_2\le\dots$. 
    
    Suppose first that each $K_i$ is elliptic. Since the $K_i$ form an ascending chain, the fixed sub-tree of $K_{i+1}$ is contained in the fixed sub-tree of $K_i$. By acylindricity of the action, these sub-trees have bounded diameter, so their intersection is non-empty. In particular, there exists a vertex group $G_v$ containing each $K_i$. The lemma then follows from the $\omega$ACC property of $G_v$. 

    Suppose now that each $K_i$ is non-elliptic and each $\bfG_{K_i}\to \bfG_{K_{i+1}}$ is surjective. 
    Passing to a subsequence, we may suppose that each $\bfG_{K_i}\to \bfG_{K_{i+1}}$ is a graph isomorphism. Let $\bfK$ be the graph isomorphic to each $\bfG_{K_i}$ for all $i$. Since $K_i\le K_{i+1}$, the minimal tree $T_{K_i}$ is contained in $T_{K_{i+1}}$. 
    Moreover, since $\bfG_{K_i}\to \bfG_{K_{i+1}}$ is an isomorphism, the inclusion $T_{K_i}\to T_{K_{i+1}}$ induces bijections between the vertex/edge orbits of $T_{K_i}$ and the vertex/edge orbits of $T_{K_{i+1}}$. 
    Note we are considering the $K_i$-orbits within $T_{K_i}$ and the $K_{i+1}$-orbits within $T_{K_{i+1}}$. 
    Choosing appropriate orbit representatives, we have $G_{v,1}\leq G_{v,2}\le\dots$ for each vertex $v\in V\bfK$, and $G_{e,1}\le G_{e,2}\le\dots$ for each edge $e\in E\bfK$. 
    Since the vertex and edge groups of $G$ satisfy $\omega$ACC, the graphs of groups eventually stabilize. 
    Similarly, the maps $G_e\hookrightarrow G_{e_+}$ stabilize. Hence the inclusions $K_i\hookrightarrow K_{i+1}$ are eventually isomorphisms. 
\end{proof}

\subsection{Freely indecomposable factors}

\begin{theorem}[Weidmann {\cite{weidmann2002nielsen}*{Theorem 1}}]\label{weidmann indecomposable vertex bound}
Let $G$ be a non-cyclic, freely indecomposable, finitely generated group, and let $G \actson T$ be a minimal $k$-acylindrical action. Then there are at most $2k (\rank G - 1)+1$ vertices in $T/G$.
\end{theorem}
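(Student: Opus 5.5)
The plan is to follow Weidmann's Nielsen-type folding method for generating tuples acting on trees. Let $n=\rank G$ and fix a generating tuple $(g_1,\dots,g_n)$. We encode it as a finite graph of groups $\bbB$ with trivial vertex and edge groups, together with an $\pi_1$-surjective morphism to the quotient $\bbG$ of $G\actson T$. Concretely, we take a base vertex $x\in T$, choose for each $i$ the geodesic segment $[x,g_ix]$, and form the wedge of $n$ loops. Subdividing each loop according to its segment gives a graph $\bfB$ with a natural $G$-equivariant map from its universal cover to $T$.

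Among all generating tuples of length $n$ and base points, we choose $(g_1,\dots,g_n;x)$ minimizing a complexity. The complexity is the total length $\sum_i d_T(x,g_ix)$, refined lexicographically by the number of edge orbits that become identified during folding. We then run the Stallings–Bestvina–Feighn folding sequence for $\bbB\to\bbG$. By Dunwoody's accessibility of folding sequences for finitely generated groups, it terminates in an isomorphism onto $\bbG$, since $G\actson T$ is minimal and the image is all of $G$.

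The first key step is to use free indecomposability and minimality to rule out bad folds. A fold that identifies two edges with trivial groups without creating non-trivial stabilizers would either shorten some $d_T(x,g_ix)$ after a Nielsen move, contradicting minimality, or exhibit a free splitting of $G$. The latter is impossible: $G$ is freely indecomposable and non-cyclic, so it is neither a nontrivial free product nor $\bbZ$. Consequently, every fold in the sequence is of the type that enlarges a vertex group, or identifies edges where at least one carries a non-trivial stabilizer.

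The second key step is to count vertices in the final graph $\bfG=T/G$. Every vertex of $\bfG$ is the image of a vertex of some segment $[x,g_ix]$. Segments of length $>k$ have trivial pointwise stabilizer by $k$-acylindricity. So along the image of each loop, any subpath on which no folding with non-trivial groups occurs has length at most about $2k$; otherwise, two long subsegments with trivial stabilizers could be folded trivially, contradicting the first step.

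The bound $2k(n-1)+1$ should then come from a spanning-tree count. Each non-trivial vertex group absorbs one generator or one relation among generators, and each new vertex orbit requires a new interaction between generators. Hence only $n-1$ of the loops contribute new vertices, each adding at most $2k$, with one vertex for the base point $x$. To make this precise, I would induct over the folding sequence, tracking the quantity $(\text{number of vertices}) - 2k\cdot(\text{number of generators used so far})$.

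The main obstacle is this vertex-counting step. The difficulty is showing that trivial folds can always be traded for Nielsen moves that decrease the complexity, and that a generator whose segment contributes more than $2k$ fresh vertices forces such a trivial fold. Handling this requires the careful interplay between acylindricity, which says a long stabilized segment is impossible, and free indecomposability, which says trivial identifications would split off a free factor. This is exactly the technical heart of \cite{weidmann2002nielsen}. I would first try to prove it in the special case where all vertex groups of $\bfG$ are non-trivial, and then reduce the general case by collapsing paths of trivial vertices.
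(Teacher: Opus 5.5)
The paper gives no proof of this statement; it quotes it from Weidmann and uses it as a black box in \Cref{weidmann indecomposable complexity bound}. So your sketch has to stand on its own. It has a genuine gap at exactly the step you call the technical heart. The bound of $2k$ new vertex orbits per generator, and the claim that only $n-1$ generators contribute, are asserted rather than derived, and the mechanism you offer for them does not work.

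Your first key step claims that in a complexity-minimizing configuration no fold identifies two edges with trivial groups without creating a stabilizer. Minimality of $\sum_i d_T(x,g_ix)$ does not give this. Suppose $[x,g_ix]$ and $[x,g_jx]$ share a maximal initial segment of length $\ell$. Then $d_T(x,g_j^{-1}g_ix)=d_T(x,g_ix)+d_T(x,g_jx)-2\ell$, so this Nielsen move shortens only when $\ell>\tfrac12 d_T(x,g_jx)$. Exactly as in Nielsen reduction for free groups, minimality limits the length of overlaps, not their existence. Similarly, if the fixed-point set of some $g_i$ lies at distance $r\ge 2$ from $x$, the natural folding of its loop of length $2r$ begins with folds that identify distinct vertices carrying trivial groups. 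Only the last fold creates a vertex group element.

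You give no argument that a minimizer, or your tie-breaker, avoids such folds. In any case, a single fold of this kind has nothing to do with a free splitting of $G$: free indecomposability is a global property and cannot be invoked fold by fold. Your claim that stretches without non-trivial folding have length ``at most about $2k$'' is deduced from this step, so it is unsupported too. The spanning-tree heuristic (``each non-trivial vertex group absorbs one generator or one relation'') is not an argument.

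What is missing is a Grushko--Nielsen type argument applied to the tuple as a whole rather than to individual folds. The ingredients would be:
\begin{itemize}
\item a complexity on (Nielsen classes of) generating tuples, decomposed into sub-tuples with their invariant subtrees;
\item a lemma saying: if two pieces are joined only across a segment not covered by segments fixed by non-trivial elements of the adjacent pieces, then either a Nielsen move lowers the complexity, or $G$ decomposes as $\langle S_1\rangle*\langle S_2\rangle$ or as $\langle S\rangle*\bbZ$, contradicting that $G$ is freely indecomposable and non-cyclic.
\end{itemize}
Acylindricity then bounds each of the two covering fixed segments by $k$. That is what gives at most $2k$ new vertex orbits at each of the $n-1$ junctions. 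Your heuristic has this shape, but supplying it is, in substance, what Weidmann's Nielsen method does, and your proposal defers to that rather than proving it.

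Two smaller points. First, a graph of groups $\bbB$ with trivial vertex groups cannot encode generators that fix $x$, since their loops have length $0$; you need non-trivial vertex groups in $\bbB$. Second, the appeal to finite termination of the folding sequence is not justified as stated. It is also unnecessary: minimality of $G\actson T$ already forces every vertex orbit of $T$ to meet $\bigcup_i[x,g_ix]$.
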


\begin{corollary}\label{weidmann indecomposable complexity bound}
    Let $G$ and $G\actson T$ satisfy the hypotheses of \Cref{weidmann indecomposable vertex bound}, and let $\bfG=T/G$. The number of edges of $\bfG$ is bounded by a function of $\rank(G)$ and $k$. Consequently, the complexity $C(G)$ is bounded by a function of $\rank(G)$ and $k$. 
\end{corollary}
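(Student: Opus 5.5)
We bound the edges of $\bfG=T/G$ using \Cref{weidmann indecomposable vertex bound} and a count of the Betti number. By that theorem, $|V\bfG|\le 2k(\rank G-1)+1$. The number of geometric edges is
\[
|\calE\bfG| = |V\bfG| - 1 + b_1(\bfG),
\]
so it remains to bound $b_1(\bfG)$. As in the proof of \Cref{complexity bounds rank}, collapsing all vertex groups gives a surjection $G\to\pi_1(\bfG)$; this is the retraction of $G$ onto the free group $\pi_1(\bfG)$. Since a free group of rank $b_1(\bfG)$ has rank exactly $b_1(\bfG)$, we get $b_1(\bfG)\le \rank G$. Hence
\[
|\calE\bfG|\le 2k(\rank G-1)+\rank G,
\]
and $|E\bfG|$ is at most twice this. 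The bound depends only on $\rank G$ and $k$.

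For the complexity $C(G)=(\varepsilon'(\bbG),\rank G)$, the action $G\actson T$ is minimal by hypothesis, so $\bbG_G$ is the graph of groups $\bbG$ of this action. The first coordinate satisfies $\varepsilon'(\bbG)\le|\calE\bfG|$, which is bounded as above. The second coordinate is $\rank G$ itself. Both coordinates are therefore bounded by functions of $\rank G$ and $k$.

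The only point needing care is whether the tree in the definition of $C(G)$ (a minimal subtree of $G\actson T_\bbG$) is the same as the given $T$. If $G$ is non-elliptic, the minimal subtree is unique, so they agree. If $G$ is elliptic, minimality forces $T$ to be a single vertex, and the bound is trivial.

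The freely indecomposable hypothesis enters only through \Cref{weidmann indecomposable vertex bound}. Without it, trivial edge groups could produce unboundedly many vertices. So the main step is applying that theorem. The Betti number bound is routine, coming from the surjection onto $\pi_1(\bfG)$, which rules out unboundedly many loops.
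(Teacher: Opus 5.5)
Your proof is correct and follows the paper's argument: Weidmann's vertex bound together with $b_1(\bfG)\le\rank G$ (via the retraction onto $\pi_1(\bfG)$) gives $\varepsilon'(\bbG)\le|\calE\bfG|\le(2k+1)\rank G-2k$, which is exactly the paper's bound. Your extra remark identifying $T$ with the minimal subtree used in the definition of $C(G)$ is a harmless clarification that the paper leaves implicit.
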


\begin{proof}
    We have the equality $|E\bfG|=|V(G)|+b_1(\bfG)-1$ which holds for any connected graph. By \Cref{weidmann indecomposable vertex bound}, $|V\bfG|\leq 2k(\rank(G)-1)+1$. We also have the trivial bound $b_1(\bfG)\leq \rank(G)$. Substituting the inequalities into the equation, we get $$\varepsilon'(\bbG)\le |E\bfG|\leq (2k+1)\rank(G)-2k.$$ It follows $C(G)$ is also bounded by a function of $\rank(G)$ and $k$.
\end{proof} 

    \begin{lemma}\label{reduce rank by conjugates}
        Let  $B_1,\dots,B_n$  be subgroups of a group $H$ and let $A_1,\dots,A_t$ be representatives of the conjugacy classes of $B_1,\dots,B_n$ in $H$. There exists $K$ such that $\gen{B_1,\dots,B_n} \le K \le H$ and $$\rank(K) \le \sum_{j=1}^t\rank(A_j) + n - t.$$
    \end{lemma}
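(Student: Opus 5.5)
We take $K=\gen{A_1,\dots,A_t, c_1,\dots,c_{n-t}}$ for suitable conjugating elements $c_i\in H$. The statement is purely algebraic, so the whole argument is a direct construction.

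First reorganize the data. Each $B_i$ is conjugate in $H$ to exactly one of the representatives $A_j$, say $B_i = h_i A_{j(i)} h_i^{-1}$ with $h_i\in H$. For each $j$, the map $i\mapsto j(i)$ hits $j$ at least once, since the $A_j$ represent the classes actually occurring among the $B_i$. For each $j$ fix one index $i_j$ with $j(i_j)=j$; these are the distinguished indices. There are $t$ distinguished indices and $n-t$ non-distinguished ones.

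Next define $K$. Set
$$K=\gen{\, h_{i_1}A_1h_{i_1}^{-1},\dots,h_{i_t}A_th_{i_t}^{-1},\; h_ih_{i_{j(i)}}^{-1} \;:\; i \text{ not distinguished}\,}.$$
This is a subgroup of $H$. Its rank is at most $\sum_j \rank(A_j) + (n-t)$, because each conjugate $h_{i_j}A_jh_{i_j}^{-1}$ is generated by $\rank(A_j)$ elements and we add one further element for each of the $n-t$ non-distinguished indices.

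Finally check that $\gen{B_1,\dots,B_n}\le K$. For a distinguished index, $B_{i_j}=h_{i_j}A_jh_{i_j}^{-1}$ lies among the generators. For a non-distinguished $i$ with $j=j(i)$, put $c=h_ih_{i_j}^{-1}\in K$. Then
$$cB_{i_j}c^{-1}=h_ih_{i_j}^{-1}\,h_{i_j}A_jh_{i_j}^{-1}\,h_{i_j}h_i^{-1}=h_iA_jh_i^{-1}=B_i,$$
so $B_i\le K$. Hence $\gen{B_1,\dots,B_n}\le K\le H$ with the required rank bound.

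The only potential obstacle is bookkeeping. The $A_j$ need not themselves lie in $K$; only their conjugates $h_{i_j}A_jh_{i_j}^{-1}$ do, and this is harmless since the statement asks only for containment of the $B_i$ and a rank bound. If some $A_j$ has infinite rank the bound is vacuous, so we may assume all ranks are finite.
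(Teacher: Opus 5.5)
Your proof is correct and is essentially the paper's argument. In each conjugacy class you keep one member $B_{i_j}$ and add one conjugating element $h_ih_{i_j}^{-1}$ for each other member, just as the paper forms $K_i=\langle B_{i,1},h_{i,2},\dots,h_{i,n_i}\rangle$ and then $K=\langle K_1,\dots,K_t\rangle$. (Only your opening sentence is slightly off: the generating set you actually use contains the conjugates $B_{i_j}=h_{i_j}A_jh_{i_j}^{-1}$, not the $A_j$ themselves, and that is what makes $n-t$ extra elements suffice.)
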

    \begin{proof}
        For $1\le i \le t$, let $B_{i,1},\dots,B_{i,n_i}$ be the subgroups among $B_1,\dots,B_n$ that are conjugate to $A_i$ in $H$. They are pairwise conjugate and so there exist $h_{i,2},\dots,h_{i,n_i}$ such that $h_{i,j}B_{i,1}h_{i,j}\ii = B_{i,j}$. Set
        $K_i:=\gen{B_{i,1},h_{i,2},\dots,h_{i,n_i}}$.
        Then, $\gen{B_{i,1},\dots,B_{i,n_i}} \le K_i \le H$ and $$\rank(K_i) \le \rank(B_{i,1}) + n_i-1 = \rank(A_i)+n_i-1.$$

        Finally, set $K = \gen{K_1,\dots,K_t}$. Then, $\gen{B_1,\dots,B_n} \le \gen{K_1,\dots,K_t} = K \le H$ and 
        \begin{align*}
            \rank(K) &\le \sum_{i=1}^t \rank(K_i) \\
            &\le  \sum_{i=1}^t (\rank (A_i) +n_i -1) \\
            &= \sum_{i=1}^t\rank (A_i) + n - t. \qedhere
        \end{align*}
    \end{proof}

\begin{lemma}\label{from bounded rank to isomorphic}
    Assume the setting of \Cref{setup}. Additionally, suppose
    \begin{itemize}
        \item Torsion elements of $G$ have uniformly bounded order.
        \item The vertex groups $\{G_v\}_{v\in V\bfG}$ each satisfies $\omega$ACC.
    \end{itemize} 
    If there exists a proper ascending chain of uniformly bounded rank subgroups of $G$, then there exists a proper ascending chain of isomorphic (finitely generated) subgroups of $G$. 
\end{lemma}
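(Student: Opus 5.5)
We argue by contradiction with \Cref{bounded complexity implies ACC}, and we reduce everything to the Grushko decompositions of the $H_i$. Let $H_1<H_2<\dots$ be a proper ascending chain with $\rank H_i\le r$. For each $i$ write a Grushko decomposition
\[H_i = A_{i,1}*\dots*A_{i,m_i}*F_{n_i},\]
where the $A_{i,l}$ are freely indecomposable and not infinite cyclic, and $F_{n_i}$ is free. By Grushko's theorem $m_i+n_i\le r$ and $\rank A_{i,l}\le r$.

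The goal is to show that only finitely many isomorphism types occur among the $H_i$. Then, after passing to a subsequence, all $H_i$ are isomorphic, and a subsequence of a proper ascending chain is still a proper ascending chain. By Grushko, the isomorphism type of $H_i$ is determined by $n_i\le r$ and by the multiset of isomorphism types of the factors $A_{i,l}$. So it suffices to control those factor types.

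\textbf{Finite factors.} If $A_{i,l}$ is finite, it is a freely indecomposable group generated by torsion. I would first check that a finite subgroup of $G$ fixes a vertex of $T_\bbG$, so it lies in a conjugate of some $G_v$. Its elements have order bounded by the torsion hypothesis. I would then argue that finite subgroups of $G$ have bounded order, hence only finitely many isomorphism types occur. The argument uses the $\omega$ACC of the $G_v$ together with rank at most $r$: a finite group generated by $r$ elements of bounded order, sitting inside an $\omega$ACC group. If this step resists, I would instead include finite factors in the chain argument of the next paragraph.

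\textbf{Infinite non-cyclic factors.} By \Cref{weidmann indecomposable complexity bound}, applied to the minimal action of $A_{i,l}$ on its minimal subtree (or trivially, if $A_{i,l}$ is elliptic), the complexity $C(A_{i,l})$ is bounded by a function of $r$ and $k$. By the Kurosh subgroup theorem, each $A_{i,l}$ is conjugate in $H_{i+1}$ into some factor $A_{i+1,l'}$, because it is freely indecomposable and not infinite cyclic. This gives a map from factors of $H_i$ to factors of $H_{i+1}$, and each $H_i$ has at most $r$ factors. By K\"onig's lemma (after passing to a subsequence), we obtain infinitely many threads
\[A_{i,l_i}\le g_i A_{i+1,l_{i+1}} g_i\ii .\]
Replacing each factor by the cumulative conjugate turns every thread into a genuine ascending chain of subgroups. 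These chains have rank at most $r$ and uniformly bounded complexity, so by \Cref{bounded complexity implies ACC} each thread stabilizes. Consequently, along a thread the factors are eventually all isomorphic to a fixed group.

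\textbf{Main obstacle.} The hard step is making sure that every non-cyclic factor of a late $H_i$ lies on one of finitely many stabilized threads, rather than new factor types appearing forever. I would handle this with a potential argument. The quantity $\sum_l \rank A_{i,l}$ is bounded by $r$. At each step, either a factor type persists along its thread, or several factors merge into one factor of $H_{i+1}$, or a factor absorbs free rank. Each merge strictly changes a bounded quantity, for instance the number of non-cyclic factors together with the free rank, ordered lexicographically. Hence only finitely many merge events can occur, and after them the threads are in bijection from step to step.

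Where a factor of $H_{i+1}$ contains several conjugates of stabilized factors, I would use \Cref{reduce rank by conjugates}. It produces a bounded-rank subgroup containing these conjugates, which lets me replace the chain by an interleaved chain of bounded rank and complexity to which \Cref{bounded complexity implies ACC} again applies.

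Once the threads are in bijection and each has stabilized, the free rank $n_i\le r$ takes finitely many values. Passing to a subsequence then makes all the $H_i$ isomorphic, which is the desired proper ascending chain of isomorphic finitely generated subgroups.
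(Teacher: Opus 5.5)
Your skeleton matches the second half of the paper's proof. That is: Grushko decompositions; the Kurosh map sending each non-cyclic indecomposable factor of $H_i$ into a unique $H_{i+1}$-conjugate of a factor of $H_{i+1}$; and stabilizing threads of conjugates via \Cref{weidmann indecomposable complexity bound} and \Cref{bounded complexity implies ACC}. The gap is the step you flag yourself: proving that the factor maps $\sigma_i$ are eventually bijective. A potential argument needs a quantity that is \emph{monotone} along the chain, and your lexicographic pair (number of non-cyclic factors, free rank) is not. Besides merges, $H_{i+1}$ can acquire a brand-new indecomposable factor containing no factor of $H_i$, which raises the first coordinate; for example, $H_i=\gen{a,b}$ free inside a freely indecomposable $H_{i+1}$. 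It can also acquire new free rank, which raises the second. A bounded quantity that drops at each merge but can also rise may change infinitely often, so ``only finitely many merge events'' does not follow. Your three cases are also not exclusive. In $H_i=A*t^{-1}At\le H_{i+1}=A*\gen{t}$ the thread of $A$ is constant while the other factor merges into it, so stabilized threads do not prevent later merges.

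Your fallback via \Cref{reduce rank by conjugates} does not close the gap either. The subgroup $K$ it produces has bounded rank but is not freely indecomposable, so \Cref{weidmann indecomposable complexity bound} does not apply and nothing available bounds $\varepsilon'(\bbG_K)$. Bounding $\varepsilon'$ for general bounded-rank subgroups is exactly what the paper can only do later, for a fixed finitely presented group, via coherence and \Cref{nontrivial edge bound}.

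The missing idea is to use \Cref{reduce rank by conjugates} to lower \emph{rank} in a minimal counterexample. Choose $r$ minimal among ranks of proper chains. Pass to a subsequence on which two numbers are constant: the number $n$ of non-cyclic indecomposable factors, and the number $k$ of cyclic factors. Include finite cyclic factors among the cyclic ones, so that bounded torsion gives finitely many types. Then $\sigma_i$ is a self-map of $\{1,\dots,n\}$. Suppose $\sigma_i$ is not bijective. Apply the lemma to the conjugates of factors of $H_{i+1}$ containing the non-cyclic factors of $H_i$, and adjoin the $k$ cyclic factors of $H_i$. This gives $H_i\le K_i\le H_{i+1}$ with
\[\rank K_i\le \sum_{j\in\im\sigma_i}\rank A^j_{i+1}+(n-|\im\sigma_i|)+k<\rank H_{i+1}\le r,\]
where $A^j_{i+1}$ are the non-cyclic factors of $H_{i+1}$. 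The strict inequality holds since every missed factor has rank at least $2$. If this happened infinitely often, the $K_i$ would form a non-stabilizing chain of rank $<r$, a contradiction.

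Separately, your first route for finite factors is not justified by $\omega$ACC. Bounding the order of $r$-generated finite subgroups of bounded exponent is Zelmanov's solution of the restricted Burnside problem. Your fallback of running finite factors through the thread argument is what the paper does for finite non-cyclic factors.
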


\begin{proof}
    Let $r\in \bbN$ be the minimal number for which there exists a proper ascending chain $H_1< H_2< \dots $ of subgroups of $G$ with $\rank(H_i)\le r$.
    
    Using the Grushko decomposition, write each $H_i$ as a free product of subgroups $H_i = A^1_i*\dots *A^{n_i}_i * C^1_i*\dots *C^{k_i}_i$ where the $A^j_i$ are freely indecomposable non-cyclic groups, and the $C^j_i$ are cyclic. 
    Since the $H_i$ have uniformly bounded rank, there exist $n,k\ge 0$ so that, after passing to a subsequence, $n_i=n$, $k_i=k$ for all $i$. Moreover, since $G$ has uniformly bounded torsion, there exist cyclic groups $C^1,\dots, C^k$ so that $C^j_i \simeq C^j$ for all $1\le j \le k$. 
    
    For each $i\geq 1$, there exists a function $\sigma_i : \{1,\dots,n\}\to \{1,\dots,n\}$ such that for all $1\le j \le n$ the factor $A_i^j$ is a subgroup of a unique conjugate $B_i^j$ of the factor $A_{i+1}^{\sigma_i(j)}$ in $H_{i+1}$. 
    
    \begin{claim} $\sigma_i$ is bijective for all but finitely many $i$. 
    \end{claim}

    \begin{claimproof}[Proof of Claim]
    Each $B^j_i$ is a conjugate of $A^{\sigma_i(j)}_{i+1}$ in $H_{i+1}$. Therefore, by \Cref{reduce rank by conjugates}, there exists some $K'_i$ such that $\gen{B^1_i,\dots,B^n_i} \le K'_i \le H_{i+1}$ and 
    $$
    \rank(K'_i) \le \sum_{j\in \im(\sigma_i)}\rank(A^j_{i+1}) + n - |\im(\sigma_i)|
    $$
    
    If $\sigma_i$ is not a bijection, then $n\ne |\im(\sigma_i)|$. Since $\rank(A^j_{i+1})>1$ for all $j$, we have
    $$n - |\im(\sigma_i)| < \sum_{j\not\in \im\sigma}\rank(A^j_{i+1}).$$
    
    Combining this with the above gives
    \begin{align*}
        \rank(K_i')&< \sum_{j\in \im(\sigma_i)}\rank(A^j_{i+1}) + \sum_{j\not\in \im\sigma}\rank(A^j_{i+1})  \\
        & = \sum_{j}\rank(A^j_{i+1})
    \end{align*}

    Define $K_i = \gen{K_i',C^1_i,\dots,C^k_i}$.
    Then, $H_i \le K_i \le H_{i+1}$ and 
    $$\rank(K_i) \le \rank(K_i') + k < \sum_{j}\rank(A^j_{i+1})+ k = \rank(H_{i+1})=r. $$

     If $\sigma_i$ is not a bijection for infinitely many $i$, the corresponding $K_i$ subsequence is a proper ascending chain of smaller rank, contradicting our assumption on $r$.
     This completes the proof of the claim.
     \end{claimproof}

    Now, since the $\sigma_i$ are eventually bijective, up to reordering the $A_j^i$ and passing to a subsequence, we may assume $\sigma_i = \id$ for all $i$. The ascending sequence $H_1\le H_2 \le \dots$ gives rise to ascending sequences of the non-cyclic factors. That is, for every $1\le j \le n$ there exists a sequence $\bar A^j_1 \le \bar A_2^j\le \dots  $ where $\bar A^j_i$ is a conjugate of $A^j_i$.
    The $\bar A^j_i$ are freely indecomposable non-cyclic groups of uniformly bounded rank, so by \Cref{weidmann indecomposable complexity bound}, they have uniformly bounded complexity. By \Cref{bounded complexity implies ACC} the sequence $\bar A^j_1 \le \bar A_2^j\le \dots  $ stabilizes. Thus, for each $1\le j \le n$, we eventually have $A^j_i \simeq A^j$ for some group $A^j$.

    We have proven that, up to passing to a subsequence, all the subgroups $H_i$ are isomorphic to the same group $A^1 * \dots *A^n*C^1 *\dots *C^k$.
\end{proof}

\subsection{Chains of isomorphic subgroups}

\begin{theorem}\label{nontrivial edge bound}
    Let $G$ be a finitely presented group, and let $k\in \bbN$. There exists $\eta = \eta(G,k)$ such that for every $k$-acylindrical minimal action of $G$ on a tree $T$ we have $\varepsilon'(T/G) \le \eta$.
\end{theorem}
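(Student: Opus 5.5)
We bound the number of geometric edges with non-trivial edge group in $T/G$ for any minimal $k$-acylindrical action $G\actson T$, using only the finite presentation of $G$. The natural tool is an accessibility argument in the style of Bestvina--Feighn or Sela's acylindrical accessibility; we adapt it to count only edges with non-trivial stabilizers.

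First we reduce to the freely indecomposable case. If $G$ is finitely presented, so is each Grushko factor, and the number of factors is bounded by $\rank G$. The action of $G$ on $T$ restricts to actions of the factors. For each factor we take its minimal subtree, which is again $k$-acylindrical. A non-trivial edge stabilizer of $G$ is contained in a vertex stabilizer of the Grushko decomposition only up to conjugacy. This step needs care: we instead pass to the graph of groups $\bbG=T/G$ and collapse all edges with trivial group. This gives a graph of groups $\bbG'$ whose vertex groups are the $\pi_1(\bbG|_{\bfA_i})$ for the components $\bfA_i$ of the complement of the trivial edges, as in Case 3 of \Cref{simplified weidemann rank bound}. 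Each $A_i$ is a free factor of $G$, hence finitely presented, with $\sum\rank A_i\le\rank G$. The non-trivial edges of $\bbG$ are exactly the edges of the $\bbG|_{\bfA_i}$, so it suffices to bound $\varepsilon'$ for each restricted graph of groups. These have no trivial edge groups and are $k$-acylindrical, and after minimalizing via \Cref{minimality criterion} only edges are lost. However, minimalizing may discard edges, and those must be counted too. Any leaf edge of $\bbG|_{\bfA_i}$ that is removed must have been attached in $\bbG$ to a trivial edge, since $\bbG$ is minimal. There are at most $b_1(\bfG)+(\text{number of components})$ such attachment points, which is bounded by $\rank G$. Each removed chain is a chain of surjective edge maps, so acylindricity bounds its length by $k$.

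The main step is a Bestvina--Feighn type resolution for the minimal pieces. We fix a finite $2$-complex $X$ with $\pi_1X=A$ (one of the $A_i$) and build an equivariant map $\tilde X\to T$, transverse to edge midpoints, giving a pattern of tracks in $X$. Each non-trivial edge of $T/A$ must be crossed by some track, by minimality. Parallel tracks bound product regions whose $\pi_1$ fixes a long segment of $T$. By $k$-acylindricity, any family of more than $k+1$ consecutive parallel tracks has trivial $\pi_1$ of the product region. The standard Kneser-type count says the number of non-parallel track classes is at most a constant $c(X)$ depending only on the complex. So the number of tracks carrying non-trivial groups is at most $(k+1)c(X)$.

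The main obstacle is the final step: showing that tracks with non-trivial $\pi_1$ correspond to edges with non-trivial edge groups, and that edges with trivial edge groups can be excluded after the earlier reduction. In particular we must verify that the resolution can be chosen so that every non-trivial edge orbit is hit by a track whose fundamental group is non-trivial. Our plan is to handle this by first choosing a resolution that minimizes complexity. Then we argue that a non-trivial edge whose corresponding tracks are all simply connected would yield a free splitting of the freely indecomposable $A$, which is impossible. Summing over the boundedly many pieces and adding the attachment correction gives $\eta(G,k)$.
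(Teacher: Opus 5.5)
\textbf{There is a genuine gap, in the last step.} Your first reduction is essentially sound: you collapse the trivial edges, work with the components $\bfA_i$, and bound the discarded leaf chains by $k$ times the number of attachment points. The trouble is what that reduction produces. The groups $A_i$ are free factors of $G$, but nothing makes them freely indecomposable, since a splitting with no trivial edge groups can occur in a freely decomposable group.

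For example, take $F_2=\langle a,b^2\rangle*_{\langle b^2\rangle}\langle b\rangle$. It is minimal and has a single edge, with non-trivial edge group. It is also $2$-acylindrical: $b^2$ is a basis element of $\langle a,b^2\rangle$, so $\langle b^2\rangle$ is malnormal there. Hence a non-trivial edge stabilizer fixes only the two edges at a $\langle b\rangle$-vertex. Now resolve this tree from a rose for $F_2$. The rose is one-dimensional, so every track is a finite set of points, and no track carries a non-trivial group.

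So the property you need is false for the pieces your reduction actually delivers. That property is that every non-trivial edge orbit is crossed by a track with non-trivial image. Consequently the count of ``tracks carrying non-trivial groups'' gives no bound on $\varepsilon'$: here it is $0$ while $\varepsilon'=1$. Your dual-tree argument (trivial-image tracks give a free splitting) yields a contradiction only when $A_i$ is freely indecomposable and non-cyclic. Passing to Grushko factors of $A_i$ does not rescue it, because, as you noticed yourself, the non-trivial edges are not visible in the factors' minimal subtrees.

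A smaller point: $X$ must not depend on the action, while $A_i$ does. This is fixable, since by Grushko's theorem free factors of $G$ fall into finitely many isomorphism types, but it needs to be said.

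\textbf{How the paper avoids this.} It never asks tracks to detect edge groups. It contracts reducible edges, trivial ones first and then non-trivial ones, to reach a reduced graph of groups $\bbG''$. It then uses \Cref{bestvina feign} as a black box to get $|V''|\le\gamma(G)$, hence $|\calE''|\le\rank G-1+\gamma(G)$. Acylindricity enters only when undoing the non-trivial reductions. A chain of reducible non-trivial edges is fixed pointwise by a non-trivial edge group, so it has length at most $k$. Therefore each vertex $v$ of $\bbG''$ pulls back to a tree with at most $k\deg(v)$ edges, giving $\varepsilon'\le(2k+1)(\rank G-1+\gamma(G))$. (Bestvina--Feighn needs small edge stabilizers; this holds in the paper's application, where edge groups are abelian.)

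To keep your own route, you would need an accessibility bound for $k$-acylindrical splittings without trivial edge groups that does not assume free indecomposability. Your track count, as set up, does not supply one.
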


\begin{definition}[Reduced]
    Let $\bbG$ be a finite graph of groups. An edge $e\in E\bfG$ with distinct endpoints is \emph{reducible} if $e_+$ has valence two and $G_e\hookrightarrow G_{e_+}$ is an isomorphism. 
    A minimal graph of groups is \emph{reduced} if there are no reducible edges, and an action on a tree $G\actson T$ is \emph{reduced} if the graph of groups structure on $T/G$ is reduced.
\end{definition}

\begin{theorem}[Bestvina--Feighn { \cite{bestvinaFeign1991boundingComplexity}*{Main Theorem}}]\label{bestvina feign}
    Let $G$ be a finitely presented group. There exists $\gamma=\gamma(G)$ such that every reduced $G$-tree with small edge stabilizers has at most $\gamma$ orbits of vertices.
\end{theorem}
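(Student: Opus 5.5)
This is a cited result (Bestvina--Feighn), and we only use it as a black box. If one wanted to reprove it, the plan would follow the standard resolution-by-tracks strategy.

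\textbf{Setting up the resolution.} Fix a finite presentation of $G$ and let $K$ be the associated finite $2$-complex, triangulated, with universal cover $\tild K$. Given a reduced $G$-tree $T$ with small edge stabilizers, we first build a $G$-equivariant map $f:\tild K\to T$. We send vertices of $\tild K$ to vertices of $T$, extend linearly over edges, and then over $2$-cells so that $f$ is transverse to midpoints of edges of $T$. The preimage of the set of edge midpoints descends to a finite pattern of disjoint tracks in $K$. By the Dunwoody-type counting argument, the number of pairwise non-parallel tracks in such a pattern is bounded by a constant $\gamma_0(K)$, roughly linear in the number of $2$-cells. So all but boundedly many complementary components of the pattern are product regions (bands) lying between parallel tracks.

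\textbf{Using reducedness.} We then modify $f$ (equivariantly homotoping it and performing folds in $T$) so that the resolution is as efficient as possible. We want every vertex orbit of $T$ to be hit by a complementary component, and every edge orbit to be represented by a track. If a vertex $v$ of $T/G$ is hit only by band regions, the band structure forces $v$ to have valence two in $T/G$. It also forces its vertex group to be generated by the image of the fundamental group of a track, which lies in an adjacent edge stabilizer. Reducedness forbids such a vertex unless the edge group includes properly into $G_v$. Hence, once these bad vertices are excluded, the number of vertex orbits is at most the number of non-band complementary regions plus a bounded correction. This gives $\gamma(G)$ in terms of $\gamma_0(K)$.

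\textbf{Main obstacle.} The hard step is the exceptional case just mentioned, where smallness enters. There a band vertex has valence two but the edge group has index $2$ (or is otherwise a proper subgroup) in the vertex group, as happens for Klein-bottle type configurations. Smallness alone does not give finite generation of edge groups, so one cannot simply compare ranks. The approach I would try first is the Bestvina--Feighn complexity argument. One shows that in a long chain of parallel bands, the bands whose vertex groups properly contain the edge group can be folded together, using the fact that small subgroups cannot contain free subgroups of rank $2$. This folding strictly lowers a complexity of the resolution (the number of tracks together with $2$-cell intersections) while keeping the tree reduced. The process terminates, and in the terminal configuration long band chains force reducible edges. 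This bounds the length of band chains, and hence the number of vertex orbits, uniformly in $G$.
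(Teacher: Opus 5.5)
Treating the theorem as a black box is exactly what the paper does. It quotes Bestvina--Feighn's Main Theorem with a citation, gives no proof, and uses it only as an input to the proof of Theorem~\ref{nontrivial edge bound}. As far as the paper is concerned, nothing more is needed.

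If you intend the sketch as an actual reproof, however, its central step is false. A vertex orbit of $T$ that is hit only by band regions need not have valence two in $T/G$. Nor need its group be generated by the image of a track group. The stabilizer of a complementary component of the lifted pattern only sees the image of that component's fundamental group, while $G_v$ may permute infinitely many such components.

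Here is a counterexample. Let $G=F_2=\langle a,b\rangle$ act on the Bass--Serre tree $T$ of $\langle a\rangle*\langle b\rangle$. Its edge stabilizers are trivial, hence small, and $T$ is minimal and reduced. Let $K$ be the two-petal rose, and send the vertex $g$ of $\widetilde K$ to $g\,v_a$. Then:
\begin{itemize}
\item each $b$-edge crosses two edge midpoints, so the middle of the $b$-petal is a band;
\item the orbit of $v_b$ is hit only by lifts of this band;
\item yet $v_b$ has valence one in $T/G$ and $G_{v_b}=\langle b\rangle$, while all track groups are trivial.
\end{itemize}
Every other choice of $f$ on this $K$ has the same feature, because the only non-band region is the one containing the vertex of $K$. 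Since $K$ must depend on $G$ alone, no optimization of $f$ removes the problem.

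Such vertex groups are invisible in $K$ and appear only when the dual tree of the pattern is folded onto $T$. Controlling them is exactly the work the Bestvina--Feighn argument has to do. Your final paragraph lists plausible ingredients: folding, a decreasing complexity, and the fact that small groups contain no $F_2$. But it does not say which complexity decreases, why reducedness survives the folds, or where smallness actually enters, so it does not close the gap.
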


\begin{definition}[Reduction]\label{reduction map}
    If $e$ in $\bbG$ is reducible, then we can view $G_{e_+}$ as a subgroup of $G_{e_-}$.  The graph of groups $\bbG/e$ is obtained by contracting the geometric edge $\{e, e \ii \}$ to a point (using $G_{e_+}\le G_{e_-}$).
    We say that $\bbG/e$ is obtained from $\bbG$ by a \emph{reduction along} $e$. 
\end{definition}

\begin{proof}[Proof of \Cref{nontrivial edge bound}]
    Let $G\actson T$ be a minimal $k$-acylindrical action with associated graph of groups $\bbG$. 
    One can perform a sequence of reductions
    $$\bbG \to \bbG/e_1 \to \bbG/(e_1\cup e_2) \to \dots \to \bbG/(e_1\cup \dots \cup e_n) = \bbG''$$
    to obtain a reduced graph of groups $\bbG''$.
    Moreover, we can arrange $e_1,\ldots, e_m$ to have trivial edge group and $e_{m+1},\ldots, e_n$ to  have nontrivial edge groups for some $m$. 
    We denote $\bbG'  = \bbG/(e_1\cup \dots \cup e_m)$, so we have $\bbG \to \bbG' \to \bbG''$.
    Let $V,\calE$ (resp. $V',\calE'$ and $V'',\calE''$) be the vertices and geometric edges of $\bfG$ (resp. $\bfG'$ and $\bfG''$). 
    By \Cref{bestvina feign}, 
    \begin{equation*}
        |V''|\le \gamma(G).
    \end{equation*}
    Thus we have,
    \begin{align*}\label{eq: bound of E''}
        |\calE'' |&= |\calE''| - |V''| + |V''|\\
        &= -\chi(\bfG'') + |V''|\\
        &=b_1(\bfG'')-1+|V''|\\
        &\le \rank(G)-1+\gamma(G).
    \end{align*}

    \begin{claim}\label{preimage of reduction}
        Let $r$ be the quotient $\bbG'\to \bbG''$. For each vertex $v\in \bbG''$, the preimage $r\ii(v)$ is a tree with at most $k\deg(v)$ geometric edges.
    \end{claim}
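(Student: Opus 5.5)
The plan is to track the reductions $\bbG'\to\bbG''$ one edge at a time and record, for each vertex $v$ of $\bfG''$, how $r\ii(v)$ was assembled. There are three steps: show that $r\ii(v)$ is a tree, show that it is a union of at most $\deg(v)$ arms hanging off one root, and use $k$-acylindricity to bound each arm by $k$ edges.

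\textbf{Step 1: $r\ii(v)$ is a tree.} A reducible edge has distinct endpoints at the moment it is contracted. Each contraction therefore merges two distinct vertices of the current graph, and never removes a loop. By induction on the number of reductions, the set of edges of $\bfG'$ contracted onto a given vertex has no cycle, and it is connected because it collapses to a single point. So $r\ii(v)$ is a subtree of $\bfG'$.

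\textbf{Step 2: $r\ii(v)$ is a union of at most $\deg(v)$ arms.} Orient each contracted edge $e_i$ ($m<i\le n$) toward the vertex that gets absorbed, namely $(e_i)_+$.
\begin{itemize}
\item At the time of its reduction, $(e_i)_+$ has valence two and $G_{e_i}\to G_{(e_i)_+}$ is an isomorphism.
\item Since contracting an edge does not change the valence of vertices that survive, each absorbed vertex of $r\ii(v)$ has exactly two incident edges in the current graph. One is $e_i$. The other either leads deeper into $r\ii(v)$ or is an edge of $\bfG''$ leaving $v$.
\item Consequently $r\ii(v)$ has a root, the unique vertex never absorbed, and every other vertex has tree-degree at most two.
\end{itemize}
So $r\ii(v)$ is a union of arms, each a path starting at the root. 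Each arm ends at a vertex whose second incident edge is an edge of $\bfG''$ at $v$. Distinct arms use distinct such edges, so there are at most $\deg(v)$ arms.

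\textbf{Step 3: each arm has at most $k$ edges.} This is the main point. Take consecutive edges $e, e'$ along an arm, directed away from the root, with $e_+=e'_-$ absorbed via $e$. Since $G_e\to G_{e_+}$ is an isomorphism and $G_{e'}$ injects into $G_{e_+}$, the image of $G_{e'}$ lies in the image of $G_e$.
\begin{itemize}
\item Lift to the Bass--Serre tree. Choose compatible lifts $\tild e,\tild e'$ of $e,e'$ sharing the vertex $\tild e_+$.
\item The stabilizer of $\tild e'$ lies in the stabilizer of $\tild e_+$, which equals the stabilizer of $\tild e$. So it fixes both edges.
\item Iterating along an arm, the stabilizer of the outermost edge fixes a lift of the whole arm pointwise.
\item All of these edges lie in $\bfG'$ after the trivial-edge reductions $e_1,\dots,e_m$, so they have nontrivial edge groups, and this stabilizer is nontrivial.
\end{itemize}
A lift of an arm with $k+1$ edges would then be a path of length $k+1$ with nontrivial pointwise stabilizer, contradicting $k$-acylindricity. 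Hence each arm has at most $k$ edges, and $r\ii(v)$ has at most $k\deg(v)$ geometric edges.

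The obstacle I expect is the bookkeeping in Step 3. Edges of an arm may be reduced in a different order from their position along the arm, so I would need to check that the nesting of edge groups survives earlier contractions. If this bookkeeping fails, the fallback is to argue directly in $T$: reductions do not change the tree's action, so the composed inclusions can be read off from stabilizers of consecutive edges of $T$ whose shared vertex has the same stabilizer as one of the two edges.
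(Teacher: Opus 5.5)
Your outline is the same as the paper's proof: degree preservation forces $r\ii(v)$ to be a wedge of at most $\deg(v)$ paths at a root, and acylindricity bounds each path by $k$. Steps 1 and 2 are fine. The gap is where you suspected. In Step 3 you assume the shared vertex $e_+=e'_-$ was ``absorbed via $e$'', so that reducibility makes $G_e\to G_{e_+}$ an isomorphism. This is false in general, because reductions act on merged vertices, and a merged vertex carries the group of its root. For example, let $a\xrightarrow{f_2}b\xrightarrow{f_3}c$ be an arm of $\bfG'$ rooted at $a$, with $b$ and $c$ of valence two. One may first reduce along $f_3$ oriented toward $b$, absorbing $b$ into $c$; the merged vertex then has group $G_c$. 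Next one reduces along $f_2$ toward the merged vertex $\{b,c\}$. Now your ``toward the absorbed vertex'' orientation of $f_3$ points toward the root. Also $b$ was absorbed through its far edge, and $c$ through an edge not incident to it in $\bfG'$. Reducibility only tells you directly that $G_{f_3}\to G_b$ and the composite $G_{f_2}\to G_b\cong G_{f_3}\to G_c$ are isomorphisms.

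\emph{The missing idea: a parent-edge invariant.} Prove by induction on the number of reductions that for every non-root vertex $w$ of a preimage tree, the edge $e_w$ joining $w$ to its neighbour nearer the root has $G_{e_w}\to G_w$ an isomorphism.
\begin{itemize}
\item Suppose an edge $e$ is contracted, running from $a$ in the preimage of the current vertex $u_-$ to $b$ in the preimage of $u_+$.
\item Reducibility says the composite $G_e\to G_b\to\dots\to G_{\rho_+}$, taken along the tree path from $b$ to the root $\rho_+$ of $u_+$, is onto.
\item Every map in this composite is injective, so each one is onto.
\item This is exactly the invariant for the vertices on the path from $b$ to $\rho_+$, whose parent edges are the ones reversed when re-rooting at $\rho_-$. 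All other parent edges are unchanged.
\end{itemize}
With this invariant your stabilizer argument works verbatim, via $\mathrm{Stab}(\tild e')\le\mathrm{Stab}(\tild w)=\mathrm{Stab}(\tild e)$ with $w=e_+$.

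Two smaller corrections are needed.
\begin{itemize}
\item The arm's edge groups are nontrivial because $e_{m+1},\dots,e_n$ were arranged to have nontrivial edge groups, not because they lie in $\bfG'$; $\bfG'$ may still contain trivial edges.
\item Reductions do change the tree, since they collapse edge orbits. So you must lift to the tree of $\bbG'$ and check that it is still $k$-acylindrical. This holds: an element fixing $k+1$ consecutive edges there fixes the corresponding edges of $T$, hence a geodesic of $T$ with at least $k+1$ edges.
\end{itemize}
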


    \begin{claimproof}[Proof of Claim]
        Consider the sequence of reductions $$\bbG'\to \bbG'/e_{m+1}\to \bbG'/(e_{m+1}\cup e_{m+2})\to\dots\to \bbG'/(e_{m+1}\cup\dots \cup e_n)=\bbG''.$$ Because a reducible edge must have an endpoint of degree two, each quotient map $\bbG'/(e_{m+1}\cup\dots\cup e_i)\to \bbG'/(e_{m+1}\cup\dots\cup e_{i+1})$ preserves degrees of vertices. Thus the inverse image in $\bbG'$ of a vertex in any $\bbG'/(e_{m+1}\cup\dots\cup e_i)$ is a wedge-sum of paths. By $k$-acylindricity, each path in such a wedge-sum can have length at most $k$. In particular, the inverse image $r^{-1}(v)$ is a tree with at most $k\deg(v)$ geometric edges.
    \end{claimproof}
    
    By \Cref{preimage of reduction}, the preimage $r^{-1}(v)$ of a vertex $v\in \bbG''$ under $r:\bbG'\to \bbG''$ contains at most $k\deg(v)$ geometric edges. Thus we have
    $$|\calE'| - |\calE''| \le \sum_{v\in V''} k \deg(v)=2k|\calE''|.$$
    Since no edge with nontrivial stabilizer is reduced under the quotient $\bbG\to \bbG'$, we have
    $$ \varepsilon '(\bbG)\le |\calE'| \le (2k+1)|\calE''| \le (2k+1)(\rank(G)-1+\gamma(G)). $$
    Set $\eta(G,k)=  (2k+1)(\rank(G)-1+\gamma(G))$.
\end{proof}

\section{Three manifold groups}
\label{sec: omegaACC for three manifold groups}


We are now ready to establish our main combination theorem.

\begin{reptheoremA}{main combination thm}
    Let $G$ be the fundamental group of a $k$-acylindrical, finite graph of groups. Suppose:
    \begin{itemize}
        \item The edge groups are finitely generated abelian groups.
        \item The vertex groups satisfy $\omega$ACC.
        \item $G$ is coherent.
        \item Torsion elements of $G$ have uniformly bounded order.
    \end{itemize} 
    Then $G$ satisfies $\omega$ACC.
\end{reptheoremA}

\begin{proof}
    Let $G$ be as in the theorem, and let $H_1\le H_2 \le \dots$ be an ascending sequence of bounded rank subgroups of $G$. 
    By \Cref{from bounded rank to isomorphic}, we may assume that the $H_i$ are all isomorphic to some group $H$.
    By coherence $H$ is finitely presented, and so by \Cref{nontrivial edge bound} $\varepsilon'(H_i)\le \gamma(H,k)$.
    It follows that the $H_i$ have bounded complexity.
    Hence, by \Cref{bounded complexity implies ACC} the sequence stabilizes.
\end{proof}

Our main application of \Cref{main combination thm} is \Cref{main theorem 3-manifolds}.
We have already proved the $\omega$ACC property for the hyperbolic pieces in the geometrization decomposition of $M$ (\Cref{cor: ACC for hyperbolic 3-manifolds}). The next two lemmas establish $\omega$ACC for Seifert-fibered and Sol 3-manifolds.

\begin{lemma}\label{extension lemma}
Let $1\to N \to G \to S \to 1$ be a short exact sequence of groups. If one of the following holds:
\begin{enumerate}
    \item $N$ is Noetherian and $S$ satisfies $\omega$ACC, or
    \item $N$ is $\omega$ACC and $S$ is finite,
\end{enumerate} 
then $G$ satisfies $\omega$ACC.
\end{lemma}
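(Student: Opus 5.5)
The plan is to take an ascending chain $H_1\le H_2\le\dots$ of subgroups of $G$ with $\rank H_i\le r$, and to split it into its intersections with $N$ and its images in $S$. Write $\pi\colon G\to S$ for the quotient map. Set $N_i = H_i\cap N$ and $S_i=\pi(H_i)\cong H_i/N_i$. Both are ascending chains, and the key elementary fact is the following: if $H\le H'$ satisfy $H\cap N = H'\cap N$ and $\pi(H)=\pi(H')$, then $H=H'$. Indeed, for $h'\in H'$ pick $h\in H$ with $\pi(h)=\pi(h')$. Then $h^{-1}h'\in H'\cap N = H\cap N\subseteq H$, so $h'\in H$. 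It therefore suffices to show, in each case, that both auxiliary chains stabilize.

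\textbf{Case (1).} The chain $N_1\le N_2\le\dots$ lies in the Noetherian group $N$, so it stabilizes with no rank hypothesis needed. The chain $S_1\le S_2\le\dots$ consists of quotients of the $H_i$, so $\rank S_i\le \rank H_i\le r$. Since $S$ satisfies $\omega$ACC, this chain stabilizes too. By the fact above, $H_i$ stabilizes.

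\textbf{Case (2).} Here $S$ is finite, so the chain $S_i$ trivially stabilizes. The work is in bounding the rank of $N_i = H_i\cap N$. Since $[H_i:N_i]=|S_i|\le |S|$, the Reidemeister--Schreier (Schreier index) formula gives
\[\rank N_i \le [H_i:N_i](\rank H_i - 1)+1 \le |S|(r-1)+1.\]
So the $N_i$ form a uniformly bounded rank ascending chain in $N$, and this chain stabilizes because $N$ satisfies $\omega$ACC. Again the fact above finishes the argument.

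I expect no real obstacle here. The only nonformal inputs are that quotients do not increase rank and the Schreier bound on the rank of finite-index subgroups. The main point to get right is applying the ``same intersection and same image implies equal'' argument to the inclusion $H_i\le H_j$ once both auxiliary chains have stabilized.
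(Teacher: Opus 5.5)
Your proposal is correct and follows essentially the same route as the paper: split the chain into $H_i\cap N$ and its image in $S$, stabilize each (Noetherianity or $\omega$ACC of $S$ in case (1); finiteness of $S$ plus a Schreier-type rank bound and $\omega$ACC of $N$ in case (2)), then conclude via the short five lemma. The only difference is cosmetic: you use the sharper Schreier bound $|S|(r-1)+1$ and prove the ``same intersection, same image'' step explicitly, where the paper uses the cruder bound $\rank(H_i)\cdot|S|$ and simply cites the short five lemma.
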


\begin{proof}
Let $H_1\le H_2 \le \ldots \le G$ be an ascending chain of constant rank subgroups of $G$. The chain $H_i \cap N$ stabilizes in each case because:
\begin{enumerate}
    \item $N$ is Noetherian, or
    \item $\rank(H_i\cap N) \le \rank (H_i)\cdot |S|$ and $N$ is $\omega$ACC.
\end{enumerate}
The image chain $\bar{H}_i \le S$ stabilizes in both cases. Hence, the short five lemma implies $H_i$ stabilizes. 
\end{proof}

\begin{corollary}\label{acc for finite index}
    Let $G'\le G$ be a finite index subgroup. Then $G$ is $\omega$ACC if and only if $G'$ is $\omega$ACC.
\end{corollary}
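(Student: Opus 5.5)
We will prove both directions by applying \Cref{extension lemma} and a direct restriction argument; the only real work is in passing from $G'$ to $G$.

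First, suppose $G$ is $\omega$ACC. Any ascending chain of bounded rank subgroups of $G'$ is also such a chain in $G$, so it stabilizes. Hence $G'$ is $\omega$ACC. Finite index is not even needed for this direction.

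Conversely, suppose $G'$ is $\omega$ACC. We first replace $G'$ by its normal core $N=\bigcap_{g\in G} gG'g^{-1}$. This is a normal subgroup of finite index in $G$, since it is the kernel of the action of $G$ on the finitely many cosets $G/G'$. Since $N$ also has finite index in $G'$, we must check that $N$ inherits $\omega$ACC from $G'$.

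\begin{itemize}
\item Let $H$ be a subgroup of $N$ with $\rank H\le r$.
\item Then $H\le G'$, so a bounded-rank chain in $N$ is a bounded-rank chain in $G'$.
\item It therefore stabilizes by hypothesis.
\end{itemize}

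Thus $N$ is $\omega$ACC, and we have a short exact sequence $1\to N\to G\to G/N\to 1$ with $G/N$ finite.

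Case (2) of \Cref{extension lemma} now applies and gives that $G$ is $\omega$ACC. The step carrying the content is the rank bound used inside that lemma: for $H\le G$, the subgroup $H\cap N$ has index at most $|G/N|$ in $H$. By the Reidemeister--Schreier index bound, $\rank(H\cap N)\le [H:H\cap N](\rank H-1)+1\le |S|\rank H$. So the intersected chain has bounded rank and stabilizes in $N$. The image chain in the finite group $S$ stabilizes trivially, and the five lemma argument finishes the proof.

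I expect no serious obstacle. The one point to be careful about is that \Cref{extension lemma} requires a normal subgroup, which is why we pass to the normal core.
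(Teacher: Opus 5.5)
Your proof is correct and matches the paper's argument: subgroups inherit $\omega$ACC, which gives the forward direction and also shows the normal core is $\omega$ACC, and then case (2) of \Cref{extension lemma} gives the reverse direction. You also spell out the Reidemeister--Schreier rank bound $\rank(H\cap N)\le [H:H\cap N](\rank H-1)+1$, which the paper's lemma uses implicitly.
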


\begin{proof}
    The forward direction is obvious. 
    For the reverse direction, we pass to the normal core of $G'$ and apply \Cref{extension lemma}.
\end{proof}

\begin{corollary}\label{no ascending in SFS}
Let $M$ be a Seifert-fibered or Sol 3-manifold. Every ascending chain constant rank subgroups in $\pi_1(M)$ stabilizes. 
\end{corollary}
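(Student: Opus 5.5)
Since $\omega$ACC passes to and from finite-index subgroups (\Cref{acc for finite index}), I will first replace $\pi_1(M)$ by a convenient finite-index subgroup in each case, and then apply \Cref{extension lemma} to a short exact sequence whose pieces are already known to behave well.

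\textbf{Sol case.} A Sol manifold is finitely covered by a torus bundle over the circle with Anosov monodromy, so $\pi_1(M)$ is virtually $\bbZ^2\rtimes\bbZ$. This group is polycyclic, and polycyclic groups are Noetherian. Alternatively, apply \Cref{extension lemma}(1) to $1\to\bbZ^2\to G\to\bbZ\to1$: here $\bbZ^2$ is Noetherian and $\bbZ$ is Noetherian, hence $\omega$ACC. Either way $G$ is $\omega$ACC, and \Cref{acc for finite index} transfers this back to $\pi_1(M)$. The same argument handles the finitely many Seifert geometries whose base orbifold is spherical or Euclidean, such as $S^3$, $S^2\times\bbR$, $\bbE^3$ and Nil. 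In those cases $\pi_1(M)$ is finite, virtually $\bbZ$, or virtually polycyclic, so it is Noetherian.

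\textbf{Seifert-fibered case.} For a compact Seifert-fibered $M$ with infinite $\pi_1$, pass to a finite cover in which the fibration is an honest circle bundle over an orientable surface $F$ with orientable fibers. This gives a central extension
\[1\to \bbZ\to \pi_1(M')\to \pi_1(F)\to 1,\]
where $\bbZ$ is generated by the fiber. Note that $M$ may have boundary, in which case $F$ has boundary and $\pi_1(F)$ is free. The kernel $\bbZ$ is Noetherian. The quotient $\pi_1(F)$ is either free, a closed surface group, or virtually $\bbZ^2$ or $\bbZ$ in the low-complexity cases. Free groups are $\omega$ACC by Takahasi--Higman, and closed surface groups are $\omega$ACC by Shusterman's theorem for limit groups. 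In the low-complexity cases the quotient is Noetherian. \Cref{extension lemma}(1) then gives $\omega$ACC for $\pi_1(M')$, and \Cref{acc for finite index} gives it for $\pi_1(M)$. Finally, $\omega$ACC means every bounded-rank ascending chain stabilizes, which in particular covers constant-rank chains, as the statement requires.

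\textbf{Main obstacle.} The delicate step is the reduction to a finite cover with an honest extension with kernel $\bbZ$. This needs the standard fact that a Seifert-fibered $M$ with infinite fundamental group is finitely covered by a circle bundle over an orientable surface with orientable fibers, and that the fiber subgroup is infinite cyclic. The case $S^2\times\bbR$ and the finite-$\pi_1$ cases must be separated out beforehand. When the base orbifold group is non-orientable or has cone points, I would first pass to the orbifold's finite-index torsion-free surface subgroup, which is possible for all good orbifolds. This is the step I would check most carefully.
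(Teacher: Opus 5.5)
Your proof is correct and follows essentially the same route as the paper. The paper applies \Cref{extension lemma} to the fiber sequence with a Noetherian cyclic kernel and an $\omega$ACC surface-type quotient (using Shusterman's theorem together with \Cref{acc for finite index}), and treats Sol as virtually $\bbZ^2$-by-$\bbZ$; the only difference is that the paper works directly with $1\to\bbZ\to\pi_1(M)\to\pi_1^{\mathrm{orb}}(B)\to 1$ and passes to finite index only in the orbifold quotient, whereas you first pass to a circle-bundle cover of $M$.
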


\begin{proof}
Let $M$ be a Seifert-fibered 3-manifold, and let $G=\pi_1(M)$. Then, $G$ fits into an exact sequence
\[ 1\to N \to G \to S \to 1\] where $N\simeq \bbZ$ is the fiber group and $S$ is the fundamental group of a good 2--orbifold. 
The group $S$ is virtually a surface group.  By \cite{shusterman}, surface groups are $\omega$ACC, and therefore so is $S$ by \Cref{acc for finite index}.
Since $N\simeq \bbZ$ is Noetherian, The corollary follows from \Cref{extension lemma}.

A similar proof works for Sol 3-manifolds as they are virtually $\bbZ^2$-by-$\bbZ$ groups.
\end{proof}

We now assemble the pieces to prove the 3-manifold theorem.

\begin{reptheoremA}{main theorem 3-manifolds}
    Fundamental groups of compact 3-manifolds satisfy $\omega$ACC.
\end{reptheoremA}

\begin{proof}
First, we may assume that $M$ is closed by considering its double (note that $\pi_1(M)$ is a subgroup of $\pi_1(M\sqcup_{\partial M} M)$).
Suppose $M$ is a closed 3-manifold. 
By \Cref{acc for finite index}, we may pass to a finite-sheeted cover, so assume $M$ is orientable. By the prime decomposition~\cite{hempel}*{Theorem 3.15}, JSJ decomposition~\cites{jaco-shalen, johannson}, and geometrization~\cites{perelman1,perelman2}\footnote{See also Bessi\'eres et al.~\cite{bessieres}, Kleiner and Lott~\cite{kleiner-lott}, and Morgan and Tian~\cite{morgan-tian}.} theorems, $M$ decomposes along essential spheres and incompressible tori into finite-volume hyperbolic and Seifert-fibered pieces. 
This decomposition splits $\pi_1(M)$ as an acylindrical graph of groups with abelian edge groups \cite{lafont}*{Proposition 8.2}.
By \Cref{cor: ACC for hyperbolic 3-manifolds,no ascending in SFS} the vertex groups of this splitting satisfy $\omega$ACC. 
By Scott \cites{scott1973coherence, scott1973compact}, fundamental groups of 3-manifolds are coherent.
Torsion elements in $\pi_1(M)$ have uniformly bounded order: 
For example, a torsion element must fix a vertex in the JSJ decomposition, and the torsion in hyperbolic and Seifert-fibered manifolds is bounded.
Therefore, by \Cref{main combination thm}, $\pi_1(M)$ satisfies $\omega$ACC.
\end{proof}

\bibliography{biblio}
\bibliographystyle{plain}

\end{document}